\newtheorem{theorem}{Theorem}[section]
\newtheorem{proposition}[theorem]{Proposition}
\newtheorem{lemma}[theorem]{Lemma}
\theoremstyle{definition}
\newtheorem{definition}[theorem]{Definition}
\theoremstyle{remark}
\newtheorem{remark}[theorem]{Remark}
\newcommand{\N}{\mathbb{N}}
\newcommand{\Z}{\mathbb{Z}}
\newcommand{\R}{\mathbb{R}}
\newcommand{\C}{\mathbb{C}}
\newcommand{\rmb}{\mathrm{b}}
\newcommand{\rmt}{\mathrm{t}}
\newcommand{\calB}{\mathcal{B}}
\newcommand{\calC}{\mathcal{C}}
\newcommand{\calF}{\mathcal{F}}
\newcommand{\calI}{\mathcal{I}}
\newcommand{\calT}{\mathcal{T}}
\renewcommand{\epsilon}{\varepsilon}
\renewcommand{\theta}{\vartheta}
\renewcommand{\phi}{\varphi}
\renewcommand{\Gamma}{\varGamma}
\renewcommand{\Sigma}{\varSigma}
\newcommand{\ad}{\mathrm{ad}}
\newcommand{\id}{\mathrm{id}}
\newcommand{\tr}{\mathrm{tr}}
\newcommand{\ptr}{\mathrm{ptr}}
\newcommand{\pcl}{\mathrm{pc}}
\newcommand{\tcl}{\mathrm{tc}}
\newcommand{\lk}{\mathrm{lk}}
\newcommand{\slk}{\mathrm{fr}}
\newcommand{\rb}{\mathrm{rb}}
\newcommand{\lb}{\mathrm{lb}}
\newcommand{\ev}{\mathrm{ev}}
\newcommand{\coev}{\mathrm{coev}}
\newcommand{\lev}{\smash{\stackrel{\leftarrow}{\mathrm{ev}}}}
\newcommand{\lcoev}{\smash{\stackrel{\longleftarrow}{\mathrm{coev}}}}
\newcommand{\rev}{\smash{\stackrel{\rightarrow}{\mathrm{ev}}}}
\newcommand{\rcoev}{\smash{\stackrel{\longrightarrow}{\mathrm{coev}}}}
\newcommand{\leqs}{\leqslant}
\newcommand{\geqs}{\geqslant}
\newcommand{\mods}[1]{\operatorname{\mathnormal{#1}-mod}}
\newcommand{\fsl}{\mathfrak{sl}}
\newcommand{\SL}{\mathrm{SL}}
\newcommand{\SO}{\mathrm{SO}}
\newcommand{\Hom}{\mathrm{Hom}}
\newcommand{\End}{\mathrm{End}}
\newcommand{\Vect}{\mathrm{Vect}}
\newcommand{\WRT}{\tau}
\newcommand{\HKR}{\psi}
\newcommand{\SLF}{\mathrm{SLF}}
\newcommand{\QC}{\mathrm{QC}}
\newcommand{\Proj}{\mathrm{Proj}}
\newcommand{\TL}{\mathrm{TL}}
\newcommand{\Idf}{\Proj(\TL)}
\newcommand{\IdX}{\Proj(\mods{\bar{U}})}
\newcommand{\ssKirby}{\omega}
\newcommand{\dsKirby}{\tilde{\omega}}
\newcommand{\nsKirby}{\Omega}
\newcommand{\sstabp}{\delta_+}
\newcommand{\sstabm}{\delta_-}
\newcommand{\nstabp}{\Delta_+}
\newcommand{\nstabm}{\Delta_-}
\newcommand{\beadC}{\TL_{\bar{U}}}
\newcommand{\ribTL}{\calT_\TL}
\newcommand{\up}{\uparrow}
\newcommand{\down}{\downarrow}
\newcommand{\subalign}[1]{
  \vcenter{
    \Let@ \restore@math@cr \default@tag
    \baselineskip\fontdimen10 \scriptfont\tw@
    \advance\baselineskip\fontdimen12 \scriptfont\tw@
    \lineskip\thr@@\fontdimen8 \scriptfont\thr@@
    \lineskiplimit\lineskip
    \ialign{\hfil$\m@th\scriptstyle##$&$\m@th\scriptstyle{}##$\crcr
      #1\crcr
    }
  }
}
\def\clap#1{\hbox to 0pt{\hss#1\hss}}
\def\mathllap{\mathpalette\mathllapinternal}
\def\mathllapinternal#1#2{%
\llap{$\mathsurround=0pt#1{#2}$}}
\newcommand{\pic}[2][0]{\raisebox{-0.5\height + 2.5pt + #1pt}{\includegraphics{#2.pdf}}}
\newcommand\arxiv[2]{\href{https://arXiv.org/abs/#1}{\texttt{arXiv:#1} #2}}
\newcommand\doi[2]{\href{https://doi.org/#1}{#2}}
\DeclareRobustCommand{\myuline}[1]{
 \ifmmode \text{\uline{$\phantom{#1}$}\llap{\contour{white}{$#1$}}}
 \else \uline{\phantom{#1}}\llap{\contour{white}{#1}} \fi
}
\newcommand{\ubar}[1]{\underaccent{\bar}{#1}}
\begin{document}

\raggedbottom

\title[Non-Semisimple 3-Manifold Invariants From the Kauffman Bracket]{Non-Semisimple 3-Manifold Invariants Derived From the Kauffman Bracket}

\author[M. De Renzi]{Marco De Renzi} 
\address{Institute of Mathematics, University of Zurich, Winterthurerstrasse 190, CH-8057 Zurich, Switzerland} 
\email{marco.derenzi@math.uzh.ch}
\address{Department of Mathematics, Faculty of Science and Engineering, Waseda University, 3-4-1 \={O}kubo, Shinjuku-ku, Tokyo, 169-8555, Japan} 
\email{m.derenzi@kurenai.waseda.jp}

\author[J. Murakami]{Jun Murakami} 
\address{Department of Mathematics, Faculty of Science and Engineering, Waseda University, 3-4-1 \={O}kubo, Shinjuku-ku, Tokyo, 169-8555, Japan} 
\email{murakami@waseda.jp}

\begin{abstract}
 We recover the family of non-semisimple quantum invariants of closed oriented 3-manifolds associated with the small quantum group of $\fsl_2$ using purely combinatorial methods based on Tem\-per\-ley--Lieb algebras and Kauffman bracket polynomials. These invariants can be understood as a first-order extension of Witten--Reshetikhin--Turaev invariants, which can be reformulated following our approach in the case of rational homology spheres.
\end{abstract}

\maketitle
\setcounter{tocdepth}{3}

\section{Introduction}\label{S:intro}

The distinction between \textit{semisimple} and \textit{non-semisimple} constructions in quantum topology refers to the properties of the algebraic ingredients involved. One of the most celebrated families of quantum invariants, known as Witten--Reshetikhin--Turaev (or WRT) invariants, is of the first kind. Indeed, if $r \geqs 3$ is an integer called the \textit{level} of the theory, then the WRT invariant\footnote{In this paper, the acronym WRT will not refer to the larger family of quantum invariants constructed by Reshetikhin and Turaev in terms of the representation theory of modular Hopf algebras, but rather to the specific subfamily recovering the topological invariants first obtained by Witten using Chern--Simons gauge theory and the Feynman path integral \cite{W88}.} $\WRT_r$ can be constructed using a semisimple quotient of the category of representations of the small quantum group $\bar{U}_q \fsl_2$ at the $r$th root of unity $q = e^{\frac{2 \pi i}{r}}$ \cite{RT91}. This invariant extends to a Topological Quantum Field Theory (TQFT for short), which can also be obtained using several different approaches based on methods ranging from combinatorics and skein theory \cite{BHMV95} to geometric topology and conformal field theory \cite{AU11}. On the other hand, the family of quantum invariants $Z_r$ considered in this paper is of the second kind. It has already been defined using the non-semisimple representation theory of quantum groups (without quotient operation), as well as more general categorical methods. By contrast, the approach developed here relies uniquely on Temperley--Lieb algebras and Kauffman bracket polynomials. In particular, we provide the first reformulation of a non-semisimple quantum invariant of closed 3-manifolds that completely bypasses Hopf algebras and their representation theory. This is the first step towards a purely combinatorial construction of non-semisimple TQFTs which will naturally induce new families of representations of Kauffman bracket skein algebras of surfaces.

The invariant $Z_r$ is defined for odd levels $r \geqs 3$, it takes values in complex numbers, and it coincides with the renormalized Hennings invariant associated with $\bar{U}_q \fsl_2$ at $q = e^{\frac{2 \pi i}{r}}$, as defined in \cite{DGP17}. Since the category of finite-dimensional representations of $\bar{U}_q \fsl_2$ is modular (in the non-semisimple sense of \cite{L94}), $Z_r$ fits into the larger family of quantum invariants constructed in \cite{DGGPR19}, and both of these approaches produce TQFT extensions whose properties are in sharp contrast with those of their semisimple counterparts, see for instance \cite[Proposition~1.4]{DGGPR20}. It should also be noted that the family of invariants considered here is very closely related to the generalized Kashaev invariants of knots in 3-manifolds defined in \cite{M13}, which have been extended to logarithmic Hennings invariants of links in 3-manifolds \cite{BBG17}, although both constructions focus on a somewhat complementary case, namely when the level $r \geqs 4$ is even. All these constructions build on the structure and properties of quantum groups and ribbon categories, and thus have a distinct algebraic flavor.

The goal of this paper is to reproduce the renormalized Hennings invariant associated with $\bar{U}_q \fsl_2$ relying exclusively on the technical setup used by Lickorish for the construction of WRT invariants \cite{L93}. One of the basic ingredients for this approach is given by the family of \textit{Temperley--Lieb algebras}\footnote{The connection between the two approaches stems from the well-known equivalence between the Temperley--Lieb algebra $\TL(m)$ and the centralizer algebra for the $m$th tensor power of the fundamental representation of a closely related Hopf algebra, \textit{Lusztig's divided power quantum group} $U_q \fsl_2$, which contains the small quantum group $\bar{U}_q \fsl_2$ as a Hopf subalgebra. In light of this, we can say our purpose is to reformulate the renormalized Hennings invariant in diagrammatic terms, rather than algebraic ones.} $\TL(m)$ of parameter $\delta = - q - q^{-1}$, where $m$ is a natural number, and by specific idempotent elements $f_m \in \TL(m)$ defined for $0 \leqs m \leqs r-1$ called (\textit{simple}) \textit{Jones--Wenzl idempotents}. In particular, a leading role is played by a formal linear combination of simple Jones--Wenzl idempotents in the range $0 \leqs m \leqs r-2$ called (\textit{semisimple}) \textit{Kirby color}, and denoted $\ssKirby$. The name comes from the fact that the scalar associated with an $\ssKirby$-labeled framed link by the graphical calculus based on the \textit{Kauffman bracket polynomial} \cite{K87} with variable $A = q^{\frac{r+1}{2}}$ is invariant under Kirby II moves. Our main technical achievement is the introduction of a \textit{non-semisimple Kirby color} $\nsKirby$, which is given by Definition~\ref{D:ns_Kirby_color} in terms of \textit{non-semisimple Jones--Wenzl idempotents} $g_m \in \TL(m)$ for $r \leqs m \leqs 2r-2$, which are in turn given by equations~\eqref{E:g_r_def}-\eqref{E:g_m_def}. Although this generalization of simple Jones--Wenzl idempotents dates back to \cite{GW93}, the formulas reported here were found in \cite{BDM19}, and were inspired by similar ones, for even values of the level $r$, due to Ibanez \cite{I15} and Moore \cite{M18}. It should also be noted that, when the level is a prime number $p$, then non-semisimple Jones--Wenzl idempotents recover $p$-Jones--Wenzl idempotents in the corresponding range, as defined in \cite{BLS19}.

\subsection{Outline of the construction}

The topological notion underlying the graphical calculus developed in this paper is that of a \textit{bichrome tangle}. Roughly speaking, a bichrome tangle is the union of a \textit{blue} framed tangle, which is both oriented and labeled by idempotent morphisms of the \textit{Temperley--Lieb category} $\TL$ of parameter $\delta = - q - q^{-1}$, and a \textit{red} framed link, which carries neither orientations nor labels. When a bichrome tangle $T$ is embedded inside a 3-manifold $M$, one should think of its blue part as an element of the corresponding Kauffman bracket skein module, and of its red part as a surgery prescription. \textit{Bichrome links}, which are closed bichrome tangles, allow us to revisit, in Section~\ref{S:3-manifold_invariant}, a construction due to Blanchet \cite{B92}. Indeed, the $\SO(3)$ version of the WRT invariant $\WRT_r(M,T)$ can be defined for a closed oriented 3-manifold $M$ decorated with a bichrome link $T \subset M$. This is done by means of a topological invariant $F_\ssKirby$ of bichrome links, taking values in $\C$, which is constructed using the Kirby color $\ssKirby$ and the Kauffman bracket polynomial. If $M$ is a closed oriented 3-manifold, $T \subset M$ is a bichrome link, and $L \subset S^3$ is a red surgery presentation of $M$ with positive signature~$\sigma_+$ and negative signature~$\sigma_-$, then
\[
 \WRT_r(M,T) := \frac{F_{\ssKirby}(L \cup T)}{\sstabp^{\sigma_+}\sstabm^{\sigma_-}}
\]
is a topological invariant of the pair $(M,T)$, where 
\begin{align*}
 \sstabp &:= \frac{i^{-\frac{r-1}{2}} r^{\frac 12} q^{\frac{r-3}{2}}}{\{ 1 \}}, &
 \sstabm &:= -\frac{i^{\frac{r-1}{2}} r^{\frac 12} q^{\frac{r+3}{2}}}{\{ 1 \}}.
\end{align*}

Similarly, the non-semisimple invariant $Z_r(M,T)$ is defined for a closed oriented 3-man\-i\-fold $M$ decorated with a bichrome link $T \subset M$, but not an arbitrary one. Indeed, $T$ needs to satisfy a certain \textit{admissibility} condition which consists in requiring the presence, among the labels of its blue components, of an idempotent morphism of $\TL$ belonging to the ideal generated by $f_{r-1}$, which can be understood as the ideal of projective objects of the idempotent completion of $\TL$. For instance, $g_r,\ldots,g_{2r-2}$ all correspond to projective objects, and the same holds for their tensor product with any other idempotent morphism of $\TL$, but $f_0,\ldots,f_{r-2}$ do not. In particular, the red part of an admissible bichrome link is allowed to be empty, while the blue part is not. In Section~\ref{S:3-manifold_invariant}, we define a topological invariant $F'_\nsKirby$ of admissible bichrome links, with values in $\C$, using the non-semisimple Kirby color $\nsKirby$, the Kauffman bracket polynomial, and the theory of \textit{modified traces} \cite{GKP10}. We point out that the admissibility assumption is required precisely in order to use this last ingredient, without which non-semisimple quantum invariants essentially boil down to a reformulation of semisimple ones, as in \cite{CKS07}. Indeed, in the case of non-semisimple ribbon categories such as $\TL$, non-degenerate modified traces can only be consistently defined on proper tensor ideals, and general existence results usually focus on the special case of the ideal of projective objects, see Section~\ref{S:m-trace}. Our main result can then be stated as follows.

\begin{theorem}
 If $M$ is a closed oriented 3-manifold, $T \subset M$ is an admissible bichrome link, and $L \subset S^3$ is a red surgery presentation of $M$ with positive signature~$\sigma_+$ and negative signature~$\sigma_-$, then
 \[
  Z_r(M,T) := \frac{F'_\nsKirby(L \cup T)}{\nstabp^{\sigma_+}\nstabm^{\sigma_-}}
 \]
 is a topological invariant of the pair $(M,T)$, where 
 \begin{align*}
  \nstabp &:= i^{-\frac{r-1}{2}} r^{\frac 32} q^{\frac{r-3}{2}}, &
  \nstabm &:= i^{\frac{r-1}{2}} r^{\frac 32} q^{\frac{r+3}{2}}.
 \end{align*}
\end{theorem}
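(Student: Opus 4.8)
The plan is to establish invariance of the ratio $F'_\nsKirby(L \cup T)/\nstabp^{\sigma_+}\nstabm^{\sigma_-}$ under the choice of red surgery presentation $L$, i.e. under the Kirby calculus. By a standard theorem (Kirby, or Fenn--Rourke in the framed version), any two framed links in $S^3$ presenting the same closed oriented 3-manifold $M$ are related by a finite sequence of Kirby II moves (handle slides) and stabilizations (Kirby I moves, i.e. connected sum with a $\pm 1$-framed unknot split from the rest of the diagram). Since $T$ is carried along as a fixed bichrome sublink throughout, it suffices to check: first, that $F'_\nsKirby(L \cup T)$ is unchanged when $L$ is modified by a handle slide of one red component over another, leaving $\sigma_+,\sigma_-$ untouched; and second, that a positive (resp. negative) stabilization multiplies $F'_\nsKirby(L \cup T)$ by exactly $\nstabp$ (resp. $\nstabm$) while increasing $\sigma_+$ (resp. $\sigma_-$) by one, so that the ratio is preserved. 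The signature bookkeeping is classical: adding a split $\pm 1$-framed unknot changes the linking matrix by an orthogonal summand $[\pm 1]$, hence increments $\sigma_\pm$ by one.

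For the handle-slide invariance, the key input is the defining property of the non-semisimple Kirby color $\nsKirby$ of Definition~\ref{D:ns_Kirby_color}: the element $\nsKirby$, inserted on a red component, is ``absorbing'' for the encircled strands in the sense required by Kirby II — this is exactly the analogue, in the bead/skein formalism, of the fact that the two-sided Kirby color of a modular category kills handle slides. Concretely, I would reduce the handle slide to a local identity in the Temperley--Lieb category relating the bead $\nsKirby$ encircling $n+1$ parallel strands (the $n$ strands of the component being slid, plus the meridian) to $\nsKirby$ encircling just the meridian, tensored with the identity on the $n$ strands; this local identity is proved using the fusion/absorption properties of the $g_m$ together with the modified trace, and it does not depend on the framing or on whether the slid component is blue or red. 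Here the admissibility hypothesis is used: it guarantees that somewhere on $L\cup T$ there is a projective label, so that $F'_\nsKirby$ is defined via the modified trace $\mathrm{t}$ on the ideal of projectives, and the modified-trace partial-trace property is what makes the local absorption identity hold after closing up.

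The main obstacle — and the place where the computation is genuinely non-trivial — is the stabilization formula, i.e. pinning down the scalars $\nstabp$ and $\nstabm$. One evaluates $F'_\nsKirby$ on the disjoint union of $L\cup T$ with a $\pm 1$-framed red unknot $U_\pm$. Because $U_\pm$ is split and red, $F'_\nsKirby(L\cup T\sqcup U_\pm)$ factors as $F'_\nsKirby(L\cup T)$ times the scalar obtained by evaluating a single $\nsKirby$-colored $\pm 1$-framed unknot (with respect to the Kauffman bracket normalization at $A = q^{(r+1)/2}$). So the whole problem collapses to computing this ``twist'' scalar for $\nsKirby$: expand $\nsKirby$ in the basis of (simple and non-semisimple) Jones--Wenzl idempotents, apply the known framing-twist eigenvalues on each $f_m$ and $g_m$, and evaluate the resulting bracket values (quantum dimensions for the simple part, modified dimensions for the projective part). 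The sum is a Gauss-type sum in $q = e^{2\pi i/r}$; carrying it out and simplifying — using the oddness of $r$ and standard quadratic Gauss sum evaluations — should yield precisely $i^{\mp\frac{r-1}{2}} r^{3/2} q^{\pm\frac{r\mp 3}{2}}$ up to the bookkeeping above, possibly after comparing with the already-established semisimple stabilization scalars $\sstabp,\sstabm$ (note $\nstabp/\sstabp = r\{1\}$ and similarly for the minus sign, which is a useful consistency check since the passage from $\ssKirby$ to $\nsKirby$ replaces an ordinary trace by a modified one, contributing exactly the extra factor $r\{1\}$). Once both moves are verified, well-definedness of $Z_r(M,T)$ is immediate, completing the proof.
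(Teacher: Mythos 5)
Your overall skeleton (reduce to Kirby I and Kirby II moves, match the stabilization scalars against the change in signature) is the standard one and agrees with the paper's. The gap is in the handle-slide step, which is the actual content of the theorem. You assert that $\nsKirby$ satisfies a local absorption identity under sliding, ``proved using the fusion/absorption properties of the $g_m$ together with the modified trace,'' but no such purely skein-theoretic proof is given, and it is not clear one is available at this level of generality: in the non-semisimple setting the sliding property of a Kirby color is not a formal consequence of fusion rules (the $g_m$ have vanishing categorical trace, the components $t'_m$ of $\nsKirby$ involve the nilpotent endomorphisms $h_m$, and the property only holds up to the equivalence relation of Remark~\ref{R:equivalence} and in the presence of a projective strand). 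The paper's route is entirely different: it transports $F'_\nsKirby$ into the quantum group $\bar U$ via the faithful functor $F_\TL$ and the HKR bead formalism, proves that $\nsKirby$ is a \emph{diagrammatic integral} (Definition~\ref{D:diagr_int} and Proposition~\ref{P:diagramatic_integral}, which require Lemma~\ref{L:ad_m} on the adjoint representation to reduce the verification to beads labeled by $K^a$, plus all of the computations of Section~\ref{S:computations}), and only then deduces Kirby II invariance from the algebraic identity $\lambda(x_{(2)})x_{(1)}=\lambda(x)1$ defining the left integral. Without Proposition~\ref{P:diagramatic_integral}, or an equivalent substitute, your argument does not go through; indeed the paper itself indicates that a purely diagrammatic proof of invariance is expected to require the extended category $\bar{\TL}$ and is left open.

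On the stabilization step: the factorization of a split red unknot and the plan to evaluate the resulting sum is reasonable in outline, but the projective summands $g_m$ have vanishing categorical trace, so the ``modified dimension'' part of your sum would have to be carried out with the explicit morphisms $t_m$, $t'_m$ and the capping into $g_{2r-2}$ rather than with naive dimensions. The paper bypasses the Gauss sum entirely: once the diagrammatic-integral property is established, it reads off $\nstabp=\lambda(v_-)$ and $\nstabm=\lambda(v_+)$ from the explicit normalization of the integral in equation~\eqref{E:ns_stab_par}. Your consistency check $\nstabp/\sstabp=r\{1\}$ is correct (note the sign in $\nstabm/\sstabm=-r\{1\}$), and a Gauss-sum computation of the kind you describe is exactly how the paper proves the semisimple Lemma~\ref{L:sstab}.
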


\subsection{Strategy of the proof}\label{S:strategy_proof}

Although the small quantum group $\bar{U} := \bar{U}_q \fsl_2$ and its category of finite-dimensional representations $\mods{\bar{U}}$ do not appear in the definition of $Z_r$, they play an important role in the proof of its topological invariance. Indeed, a well-known faithful braided monoidal linear functor $F_\TL : \TL \to \mods{\bar{U}}$ allows us to interpret morphisms of $\TL$ as intertwiners between tensor powers of the \textit{fundamental representation} $X$ of $\bar{U}$, as recalled in Section~\ref{S:beads}. Then, the idea is essentially to check that our definition of $Z_r$ computes exactly the renormalized Hennings invariant associated with $\bar{U}$.

In Section~\ref{S:small_quantum_group} we prepare the ground for this comparison by introducing our algebraic setup. In particular, we fix a \textit{left integral} of $\bar{U}$, which is a linear form $\lambda \in \bar{U}^*$ satisfying a crucial condition that can be understood as an algebraic version of the invariance under Kirby II moves, see Section~\ref{S:proof_3-manifold_invariant}. This provides the key ingredient for the definition of both the original Hennings invariant and its renormalized version\footnote{Both \cite{H96} and \cite{DGP17} actually use a \textit{right integral}, but the difference is simply a matter of conventions. Other related choices involve the use of top tangles, bottom tangles, or string links, the use of the adjoint, the coadjoint, or the regular representation, and so on. Changing one of these conventions requires changing accordingly all the others.}. The left integral $\lambda$ belongs to the space $\QC(\bar{U})$ of \textit{quantum characters} of $\bar{U}$, which admits a basis composed of \textit{quantum traces} and \textit{pseudo quantum traces} corresponding to simple and indecomposable projective $\bar{U}$-modules. In Section~\ref{SS:decomposition_integral} we adapt computations of Arike \cite{A08} to the odd level case, and obtain an explicit decomposition of $\lambda$ into this basis of $\QC(\bar{U})$. Next, we use the fact that every quantum character can be interpreted as a $\bar{U}$-module morphism with target the trivial representation $\C$ and source the adjoint representation $\ad$, which has been studied in detail by Ostrik \cite{Os95}. An important property of $\ad$ is that it admits a $\Z$-grading, and that, as explained in Section~\ref{SS:decomposition_adjoint}, every quantum character is completely determined by its restriction to the subspace of degree 0 vectors of $\ad$. Then, the rest of the paper is devoted to explain why and how the non-semisimple Kirby color $\nsKirby$ provides a diagrammatic implementation of the left integral $\lambda$, and for the proof it is sufficient to focus on the degree 0 part of $\ad$.

The next step consists in reviewing the algorithm for the computation of the Hennings invariant. We point out that there exist already several places in the literature where different methods have been explained in detail. The interested reader can check \cite{H96} for the original definition, \cite{KR94} for an improved construction that avoids the use of orientations, \cite{L94, Oh95, K96, V03, H05} for several reformulations, and \cite{DGP17} for the renormalized version involving modified traces. The common idea behind all these different approaches is essentially to get rid of the representation theory in the original construction of WRT invariants \cite{RT91}, to figure out explicitly the relevant combinatorics for elements of the quantum group, and to evaluate these using the left integral $\lambda$. We will briefly explain the procedure once again in Section~\ref{S:beads} for convenience, but it should be noted that, once we establish that the non-semisimple Kirby color $\nsKirby$ implements the left integral $\lambda$, the rest of the proof should be regarded as a well-known consequence of the Hennings--Kauffman--Radford (or HKR) theory. More precisely, in Section~\ref{SS:bead_category} we introduce the \textit{bead category} $\beadC$ by allowing elements of $\bar{U}$ to sit on strands of morphisms of the Temperley--Lieb category $\TL$. This allows us to rephrase the HKR algorithm, and in particular the one presented by Kerler and Virelizier, as a procedure which, starting from a top tangle, returns a morphism of the bead category. Then, in Section~\ref{SS:diagrammatic_integral} we prove our main technical result, which can be explained as follows: completing the HKR algorithm with the algebraic evaluation based on the left integral $\lambda$ yields the same result as completing it with the diagrammatic evaluation based on the non-semisimple Kirby color $\nsKirby$. The proof is obtained by explicit computation, and it is based on a series of formulas involving non-semisimple Jones--Wenzl idempotents which are established in Section~\ref{S:computations}.

\subsection{Relation with WRT}

As explained in Section~\ref{S:3-manifold_invariant}, an invariant of closed oriented 3-manifolds decorated with non-admissible (possibly empty) bichrome links can be obtained by setting
\[
 Y_r(M,T) := Z_r(M \# S^3,T \cup O),
\]
where $O \subset S^3$ is a blue unknot of framing 0 and label $f_{r-1}$. Then, using \cite[Theorem~1]{CKS07}, we can show that
\[
 Y_r(M,T) = h_1(M) \WRT_r(M,T),
\]
where $h_1(M) = \left| H_1(M) \right|$ if $\left| H_1(M) \right|$ is finite, and $h_1(M) = 0$ otherwise. Furthermore, if $T' \subset M'$ is an admissible bichrome link, then we have
\[
 Z_r(M \# M',T \cup T') = Y_r(M,T) Z_r(M',T).
\]
Therefore, we can think of the invariant $Z_r$ as a first-order extension of $\WRT_r$, at least for rational homology spheres, in the same spirit of \cite[Section~1.3]{CGP12}.

\subsection{Future perspectives}

An extended version $\bar{\TL}$ of the Temperley--Lieb category $\TL$ was introduced in \cite{BDM19} in order to recover a diagrammatic description of the full monoidal subcategory of $\mods{\bar{U}}$ generated by the fundamental representation of $\bar{U}$. Indeed, roughly speaking, $\TL$ misses a few morphisms, since it is equivalent to the category of \textit{tilting modules} of a different Hopf algebra, namely Lusztig's divided power quantum group $U_q \fsl_2$, of which the small quantum group $\bar{U}$ is a Hopf subalgebra. Now, although $\bar{\TL}$ can be avoided for the definition of $Z_r$, we expect it to play a major role in any purely diagrammatic proof of its topological invariance, as well as in the skein model for its TQFT extension based on the universal construction of \cite{BHMV95}.

Using $\bar{\TL}$, we can define appropriate bichrome versions of Kauffman bracket skein modules. State spaces of non-semisimple TQFTs are quotients of these bichrome skein modules, and they naturally carry actions of Kauffman bracket skein algebras. The prospect of obtaining new families of representations for these algebraic structures is especially interesting, since most geometric applications of WRT TQFTs exploit this technology. More generally, the development of alternative models of non-semisimple TQFTs is a crucial step for enhancing the flexibility of the theory, and for promoting its applications to the deep and mysterious questions concerning the geometric and dynamic content of quantum constructions in topology.

\subsection*{Acknowledgments}

This work was supported by KAKENHI Grant-in-Aid for JSPS Fellows 19F19765. We would like to thank Christian Blanchet, for several helpful discussions, as well as the referees, for many useful remarks which helped us improve the paper.

\section{Temperley--Lieb category and modified trace}\label{S:TL}

In this section we recall definitions for the main tools required by our construction: Temperley--Lieb algebras, Kauffman bracket skein relations, and modified traces. In order to do this, we fix once and for all an odd integer $3 \leqs r \in \Z$, and we consider the primitive $r$th root of unity $q = e^{\frac{2 \pi i}{r}}$. For every natural number $k \in \N$ we adopt the notation 
\[
 \{ k \} := q^k - q^{-k},
 \quad [k] := \frac{\{ k \}}{\{ 1 \}},
 \quad [k]! := \prod_{j=1}^k [j],
 \quad \{ k \}' := q^k + q^{-k}.
\]

\subsection{Temperley--Lieb category}\label{SS:TL_category}

For the definition of the Temperley--Lieb category, we will follow the approach of \cite[Section~3.3]{L93} and \cite[Section~3]{BHMV95}, which is based on the category of unoriented framed tangles \cite[Section~7]{T89}. Let us consider the cube $I^3 \subset \R^3$, where $I \subset \R$ denotes the interval $[0,1]$. An \textit{$(m,m')$-tangle} is the unoriented image of a proper embedding into $I^3$ of a disjoint union of finitely many copies of $I$ and $S^1$, whose boundary is composed of $m$ points on the bottom line $I \times \{ \frac 12 \} \times \{ 0 \} \subset I^3$ and $m'$ points on the top line $I \times \{ \frac 12 \} \times \{ 1 \} \subset I^3$. A \textit{framed tangle} is a tangle equipped with a \textit{framing}, that is, a transverse vector field along each of its components. We represent framed tangles in $I^3$ as planar diagrams projected orthogonally to $I \times \{ 0 \} \times I$, and we adopt the blackboard framing convention, which means the framing always points to the reader. The \textit{Temperley--Lieb category} $\TL$ is the linear category with set of objects $\N$, and with vector space of morphisms from $m \in \TL$ to $m' \in \TL$ denoted $\TL(m,m')$, and given by the quotient of the vector space generated by isotopy classes of framed $(m,m')$-tangles in $I^3$ modulo the subspace generated by vectors of the form
\begin{gather*}
 \pic{skein_rel_1} \tag{S1}\label{E:S1} \\*
 \pic{skein_rel_2} \tag{S2}\label{E:S2}
\end{gather*}
These pictures represent operations performed inside a disc $D^3$ embedded into $I^3$, and they leave tangles unchanged in the complement. Composition of morphisms of $\TL$ is given by gluing vertically two copies of $I^3$, in the opposite order with respect to \cite{T89}, and then shrinking the result into $I^3$. Then, for every $m \in \TL$, the $m$th \textit{Temperley--Lieb algebra} is defined as $\TL(m) := \TL(m,m)$.

The Temperley--Lieb category $\TL$ can be given a \textit{ribbon structure}, see \cite[Section~8.10]{EGNO15} for a definition. Tensor product of objects of $\TL$ is given by taking their sum, while tensor product of morphisms of $\TL$ is given by gluing horizontally two copies of $I^3$, and then shrinking the result into $I^3$. When representing graphically a morphism of $\TL$, we will sometimes allow components (or their endpoints) to carry labels given by natural numbers, as a shorthand for the number of parallel strands with respect to the framing (although sometimes, when this information can be deduced from the rest of the diagram, labels can be omitted), and we will allow morphisms to be replaced by boxes containing their name. Then left and right evaluation and coevaluation, braiding, and twist morphisms are defined, for all $m,m' \in \TL$, by
\begin{align}
 \lev_m = \rev_m &= \pic{structure_morphisms_ev_TL} &
 \lcoev_m = \rcoev_m &= \pic{structure_morphisms_coev_TL} \label{E:TL_ev_coev} \\*[5pt]
 c_{m,m'} &= \pic{structure_morphisms_braiding_TL} &
 \theta_m &= \pic{structure_morphisms_twist_TL} \label{E:TL_braiding_twist}
\end{align}
In particular, the dual $u^* \in \TL(m',m)$ of a morphism $u \in \TL(m,m')$ is obtained by a rotation of angle $\pi$.
By abuse of notation, we still denote by $\TL$ the idempotent completion of $\TL$. This means we promote idempotent endomorphisms $p \in \TL(m)$ to objects of $\TL$, and for all $p \in \TL(m)$ and $p' \in \TL(m')$ we set
\begin{equation}\label{E:morphism_completion}
 \TL(p,p') := \{ u \in \TL(m,m') \mid u p = u = p' u \}.
\end{equation}
By the same abuse of notation, we sometimes consider natural numbers $m \in \TL$ as idempotent endomorphisms of $\TL$.

Next, let us recall the definition of a special family of idempotents of $\TL$, first defined in \cite{J83,W87}, and later generalized in \cite{GW93}. We will follow the approach of \cite{BDM19}, where more details can be found. For every integer $0 \leqs m \leqs r-1$ the \textit{$m$th simple Jones--Wenzl idempotent} $f_m \in \TL(m)$ is recursively defined as 
\begin{align}
 \pic{jw_f_0} &:= \pic{jw_f_0_1} \label{E:f_0_def} \\*[5pt]
 \pic{jw_f_1} &:= \pic{jw_f_1_1} \label{E:f_1_def} \\*
 \pic{jw_f_m} &:= \pic{jw_f_m_1} + \frac{[m-1]}{[m]} \cdot \pic{jw_f_m_2} \label{E:f_m_def}
\end{align}
Let us recall some basic properties of simple Jones--Wenzl idempotents. We have
\begin{align}\label{E:f_f}
 \pic{jw_f_f_m_n_1} &= \pic{jw_f_f_m_n_2} = \pic{jw_f_m+n}
\end{align}
for all integers $0 \leqs m \leqs r-1$ and $0 \leqs n \leqs r-m-1$, and
\begin{align}\label{E:t_f}
 \pic{jw_t_f_m_k} &= (-1)^k \frac{[m+1]}{[m-k+1]} \cdot \pic{jw_f_m-k}
\end{align}
for every integer $0 \leqs k \leqs m$. Simple Jones--Wenzl idempotents satisfy $f_m^* = f_m$ for every integer $0 \leqs m \leqs r-1$ with respect to the rigid structure determined by equation~\eqref{E:TL_ev_coev}. In other words, a rotation of angle $\pi$ fixes $f_m$. 

Similarly, thanks to \cite[Lemma~3.2]{BDM19}, for every integer $r \leqs m \leqs 2r-2$ the \textit{$m$th non-semisimple Jones--Wenzl idempotent} $g_m \in \TL(m)$ is recursively defined as
\begin{align}
 \pic{jw_g_r} &:= \pic{jw_g_r_1} \label{E:g_r_def} \\*
 \pic{jw_g_r+1} &:= \pic{jw_g_r+1_1} - \pic{jw_g_r+1_2} - \pic{jw_g_r+1_3} - [2] \cdot \pic{jw_g_r+1_4} \label{E:g_r+1_def} \\*
 \pic{jw_g_m} &:= \pic{jw_g_m_1} + \frac{[m-1]}{[m]} \cdot \pic{jw_g_m_2} - \frac{2}{[m]^2} \cdot \pic{jw_g_m_3} \label{E:g_m_def}
\end{align}
where $h_m \in \TL(m)$ is the nilpotent endomorphism defined as
\begin{equation}
 \pic{jw_h_m} := (-1)^{m+1} [m+1] \cdot \pic{jw_h_m_1} \label{E:h_def}
\end{equation}
As explained in \cite[Section~3]{BDM19}, these endomorphisms satisfy
\begin{align}\label{E:g_f}
 \pic{jw_g_f_m_n_1} &= \pic{jw_g_f_m_n_2} = \pic{jw_g_m+n}
\end{align}
for all integers $0 \leqs m \leqs r-1$ and $r-m \leqs n \leqs 2r-m-2$,
\begin{align}\label{E:g_h}
 \pic{jw_g_g_m_n_1} &= \pic{jw_g_g_m_n_2} = \pic{jw_g_m+n} \\*
 \pic{jw_g_h_m_n_1} &= \pic{jw_g_h_m_n_2} = \pic{jw_h_m+n} \\*
 \pic{jw_h_h_m_n_1} &= \pic{jw_h_h_m_n_2} = 0
\end{align}
for all integers $r \leqs m \leqs 2r-2$ and $0 \leqs n \leqs 2r-m-2$,
\begin{align}
 \pic{jw_t_g_m_k} &= (-1)^k \frac{[m+1]}{[m-k+1]} \cdot \pic{jw_g_m-k} + (-1)^k \frac{2[k]}{[m-k+1]^2} \cdot \pic{jw_h_m-k} \label{E:t_g_k_low} \\*
 \pic{jw_t_h_m_k} &= (-1)^k \frac{[m+1]}{[m-k+1]} \cdot \pic{jw_h_m-k} \label{E:t_h_k_low}
\end{align}
for every integer $0 \leqs k \leqs m-r$,
\begin{align}
 \pic{jw_t_g_m_k} &= (-1)^m \{ m+1 \}' \cdot \pic{jw_f_r-1} \label{E:t_g_k_middle} \\*
 \pic{jw_t_h_m_k} &= (-1)^{m+1} [m+1] \cdot \pic{jw_f_r-1} \label{E:t_h_k_middle}
\end{align}
for $k = m-r+1$, and
\begin{align}
 \pic{jw_t_g_m_k} &= \pic{jw_t_h_m_k} = 0  \label{E:t_g_h_k_high}
\end{align}
for every integer $m-r+2 \leqs k \leqs m$, see \cite[Lemma~3.1]{BDM19}. While for every integer $r \leqs m \leqs 2r-2$ we have $h_m^* = h_m$, non-semisimple Jones--Wenzl idempotents do not meet, in general, this condition, with the only exception of $g_{2r-2}^* = g_{2r-2}$, as explained in \cite[Remark~3.4]{BDM19}. For an explicit computation of non-semisimple Jones--Wenzl idempotents in the special case $r=3$, see Section~\ref{SS:level_3}. For a representation theoretic interpretation of the recursive relations~\eqref{E:f_0_def}-\eqref{E:f_m_def} and \eqref{E:g_r_def}-\eqref{E:g_m_def}, compare with equations~\eqref{E:simple}-\eqref{E:indecomp_proj} using \cite[Lemma~4.1]{BDM19}.

\subsection{Modified trace}\label{S:m-trace}

We recall now a few important definitions which will require the abstract language of \textit{ribbon categories}, see again \cite[Section~8.10]{EGNO15} for a general definition. What is most important for our purposes is the fact that a ribbon category $\calC$ comes equipped with a tensor product and a tensor unit, as well as structural data given by left and right evaluation and coevaluation, braiding, and twist morphisms, such as those introduced in equations~\eqref{E:TL_ev_coev} and \eqref{E:TL_braiding_twist} for $\TL$. Using this structure, a diagrammatic calculus for morphisms of $\calC$, based on the \textit{Penrose graphical notation}, can be developed. We point out that our convention for orientations will be opposite with respect to the one of \cite[Section~I.1.6]{T94}. Consequently, if $\calC$ is a ribbon category, then structure morphisms are represented, for all $x,x' \in \calC$, by
\begin{align*}
 \lev_x &= \pic{structure_morphisms_lev_C} &
 \lcoev_x &= \pic{structure_morphisms_lcoev_C} &
 \rev_x &= \pic{structure_morphisms_rev_C} &
 \rcoev_x &= \pic{structure_morphisms_rcoev_C} \\*[5pt]
 & &
 c_{x,x'} &= \pic{structure_morphisms_braiding_C} &
 \theta_x &= \pic{structure_morphisms_twist_C} 
 & &
\end{align*}


An \textit{ideal} of a ribbon category $\calC$ is a full subcategory $\calI$ of $\calC$ which is absorbent under tensor products and closed under retracts. In other words, if $x \in \calI$, then for every $x' \in \calC$ we have $x \otimes x' \in \calI$, and for all $f \in \calC(x',x)$ and $f' \in \calC(x,x')$ satisfying $f' \circ f = \id_{x'}$ we have $x' \in \calI$. The \textit{ideal generated by an object} $x \in \calC$ is the ideal of $\calC$ whose objects $x'$ satisfy $\id_{x'} = f' \circ f$ for some object $x'' \in \calC$ and morphisms $f \in \calC(x',x \otimes x'')$ and $f' \in \calC(x \otimes x'',x')$. Remark that equations~\eqref{E:g_r_def}-\eqref{E:g_m_def} immediately imply that, for every integer $r \leqs m \leqs 2r-2$, the non-semisimple Jones--Wenzl idempotent $g_m$ belongs to the ideal of $\TL$ generated by $f_{r-1}$. 

The \textit{partial trace} of an endomorphism $f \in \End_\calC(x \otimes x')$ is the endomorphism $\ptr_{x'}(f) \in \End_\calC(x)$ defined as
\[
 \ptr_{x'}(f) := \pic{modified_trace_partial}
\]

Following \cite{GKP10}, a \textit{trace $\rmt$ on an ideal $\calI$} of a ribbon linear category $\calC$ over $\C$ is a family of linear maps
\[
 \{ \rmt_x : \End_\calC(x) \to \C \mid x \in \calI \}
\]
satisfying:
\begin{enumerate}
 \item \textit{Cyclicity}: $\rmt_x(f' \circ f) = \rmt_{x'}(f \circ f')$ for all objects $x,x' \in \calI$ and all morphisms $f \in \calC(x,x')$, $f' \in \calC(x',x)$;
 \item \textit{Partial trace}: $\rmt_{x \otimes x'}(f) = \rmt_x(\ptr_{x'}(f))$ for all objects $x \in \calI$, $x' \in \calC$, and every endomorphism $f \in \End_\calC(x \otimes x')$.
\end{enumerate}
A trace $\rmt$ on $\calI$ is \textit{non-degenerate} if, for all $x \in \calI$ and $x' \in \calC$, the bilinear pairing $\rmt_x(\_ \circ \_) : \calC(x',x) \times \calC(x,x') \to \C$ determined by $(f',f) \mapsto \rmt_x(f' \circ f)$ is non-degenerate. Let us denote by $\Idf$ the ideal of \textit{projective objects of $\TL$}. 

\begin{proposition}\label{P:m-trace_def}
 The ideal $\Idf$ is generated by $f_{r-1} \in \TL$, and there exists a unique trace $\rmt^{\TL}$ on $\Idf$ satisfying
 \begin{equation}\label{E:m-trace_def}
  \rmt^{\TL}_{f_{r-1}}(f_{r-1}) = 1.
 \end{equation}
 Furthermore, $\rmt^{\TL}$ is non-degenerate.
\end{proposition}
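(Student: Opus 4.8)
The plan is to establish three things: that $\Idf$ is generated by $f_{r-1}$, that a trace on $\Idf$ normalized by \eqref{E:m-trace_def} exists and is unique, and that it is non-degenerate. For the first point, I would recall the well-known representation-theoretic picture: under the functor $F_\TL : \TL \to \mods{\bar U}$, the object $f_{r-1}$ corresponds to the $(r-1)$-dimensional Steinberg module, which is simple and projective. Since the indecomposable objects of the idempotent completion of $\TL$ are exactly the $f_m$ for $0 \leqs m \leqs r-1$ together with (summands of) the $g_m$ for $r \leqs m \leqs 2r-2$, and since the recursive formulas \eqref{E:g_r_def}--\eqref{E:g_m_def} already exhibit each $g_m$ as a retract of something built from $f_{r-1}$, the only indecomposables \emph{not} in the ideal generated by $f_{r-1}$ are $f_0, \ldots, f_{r-2}$ — which are not projective because they correspond to non-projective simple $\bar U$-modules. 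Thus $\Idf = \langle f_{r-1}\rangle$.

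For existence and uniqueness, I would invoke the general theory of modified traces on ribbon categories: by \cite{GKP10} (and its refinements), a nonzero trace on the ideal generated by a simple projective object, if it exists, is unique up to a global scalar, so the normalization \eqref{E:m-trace_def} pins it down provided $\rmt^\TL_{f_{r-1}}$ is not forced to vanish. Existence reduces to checking that the partial-trace and cyclicity axioms are consistent, which in turn (since $f_{r-1}$ generates the ideal) amounts to verifying that the two natural ways of computing $\rmt^\TL_{f_{r-1}}(\ptr_{\,\cdot\,}(u))$ agree — equivalently, that $f_{r-1}$ is an \emph{ambidextrous} object in the sense of \cite{GKP10}. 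The cleanest route is to transport the question through $F_\TL$ to $\mods{\bar U}$: $\bar U$ is a unimodular ribbon (in fact factorizable) Hopf algebra, its category of finite-dimensional modules carries a canonical modified trace on projectives built from the (co)integral, and $f_{r-1}$ maps to the Steinberg module on which this trace is nonzero. Pulling back along the faithful monoidal functor $F_\TL$ gives the trace $\rmt^\TL$, and nonvanishing on $f_{r-1}$ lets us rescale to achieve \eqref{E:m-trace_def}. Alternatively, one can give a self-contained diagrammatic argument: ambidexterity of $f_{r-1}$ in $\TL$ follows because $\End_\TL(f_{r-1}) = \C f_{r-1}$ is one-dimensional (the Steinberg module is simple), so any $u \in \End_\TL(f_{r-1} \otimes x)$ has $\ptr^L_x(u)$ and $\ptr^R_x(u)$ both scalar multiples of $f_{r-1}$, and these scalars coincide by a partial-trace computation using \eqref{E:t_f} together with the self-duality $f_{r-1}^* = f_{r-1}$.

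Non-degeneracy is then the place where the specifics of $\bar U_q\fsl_2$ at an odd root of unity enter. I would argue that $\mods{\bar U}$ is \emph{modular} in the non-semisimple sense of \cite{L94} — in particular the Hopf algebra is factorizable — and that for such categories the canonical modified trace on projectives is automatically non-degenerate; this is standard (see e.g. the general results invoked in \cite{DGGPR19}). Equivalently, at the level of $\TL$: for $x \in \Idf$ and $x' \in \TL$, the pairing $\rmt^\TL_x(\_ \circ \_)$ on $\TL(x',x) \times \TL(x,x')$ is non-degenerate because, decomposing $x$ into indecomposable projective summands (each a retract of $f_{r-1} \otimes (\text{something})$) and using the partial-trace axiom to reduce to $x = f_{r-1}$, non-degeneracy on $f_{r-1}$ follows from $\End_\TL(f_{r-1})$ being the one-dimensional simple algebra $\C$, on which the trace form $a \mapsto \rmt^\TL_{f_{r-1}}(a) = a$ is manifestly non-degenerate, and projectivity of $f_{r-1}$ ensures the relevant Hom-spaces are "dual" in the sense needed.

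The main obstacle I anticipate is proving \textbf{ambidexterity} of $f_{r-1}$ cleanly — i.e.\ that left and right partial traces through $f_{r-1}$ always agree — without simply quoting the abstract machinery of \cite{GKP10} or \cite{DGP17}. A fully diagrammatic proof must either (i) exploit that $f_{r-1}$ is fixed by a $\pi$-rotation and that $\End_\TL(f_{r-1})$ is one-dimensional, reducing the comparison of the two partial traces to a single scalar identity checked via \eqref{E:t_f}, or (ii) accept the transport along $F_\TL$ and use that $\bar U$ is unimodular (so that left and right cointegrals coincide up to the distinguished grouplike, which is trivial here), whence the modified trace built from the cointegral is automatically two-sided. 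I expect the paper takes route (ii), since $\bar U$ and $F_\TL$ are available, but I would flag that the genuinely content-bearing input is \emph{unimodularity of $\bar U$ together with simplicity and projectivity of the Steinberg module}, everything else being formal.
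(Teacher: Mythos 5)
Your overall strategy coincides with the paper's: establish that $f_{r-1}$ is projective and generates $\Idf$, then invoke the Geer--Kujawa--Patureau machinery for existence, uniqueness, and non-degeneracy. The actual proof is very short: it uses the braided monoidal \emph{equivalence} between $\TL$ and the full monoidal subcategory of $\mods{U}$ generated by the fundamental representation of Lusztig's divided-power quantum group $U$ (not the small quantum group $\bar{U}$), under which $f_{r-1}$ is the Steinberg module, projective by Andersen--Polo--Wen; generation of $\Idf$ is then \cite[Lemma~4.4.1]{GKP10}, and everything else is \cite[Theorem~5.5, Corollary~5.6]{GKP18} applied directly to $\TL$. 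Your alternative diagrammatic route via ambidexterity of $f_{r-1}$ (one-dimensionality of $\End_\TL(f_{r-1})$ plus $f_{r-1}^* = f_{r-1}$) is legitimate and is essentially what the cited results of \cite{GKP18} package in general form.

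The one step that fails as literally written is ``pulling back along the faithful monoidal functor $F_\TL$ gives the trace $\rmt^\TL$.'' Since $F_\TL$ does not preserve the ribbon structure --- by equation~\eqref{E:ptr_TL_general} one has $F_\TL(\ptr_k(u)) = (-1)^k \ptr_{F_\TL(k)}(F_\TL(u))$ --- the naive pullback $u \mapsto \rmt^{\bar{U}}_{F_\TL(p)}(F_\TL(u))$ violates the partial-trace axiom for $\TL$'s own partial trace by a sign. The correct relation is $\rmt^\TL_p(u) = (-1)^m \rmt^{\bar{U}}_{F_\TL(p)}(F_\TL(u))$ for $p \in \TL(m)$ (Remark~\ref{R:mod_tr_TL}), and this sign is precisely why the paper introduces the ribbon Kauffman bracket and the sign $(-1)^m$ in equation~\eqref{E:ad_bic_link_inv}; the paper avoids the issue in this proposition by working with the equivalence to $\mods{U}$ rather than with $F_\TL$. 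A second, smaller caveat: testing projectivity of $f_0,\ldots,f_{r-2}$ inside $\mods{\bar{U}}$ is not quite legitimate because $F_\TL$ is faithful but not full; projectivity must be read off in $\TL$ itself (equivalently, among tilting modules of $U$). Neither point is fatal --- the sign is a bookkeeping fix and the non-projectivity of the $f_m$ with $m \leqs r-2$ is not actually needed once one knows that any projective object generates $\Idf$ --- but the sign discrepancy is exactly the kind of subtlety this construction hinges on, so it should not be elided.
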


A proof of Proposition~\ref{P:m-trace_def} is postponed to Section~\ref{S:proof_S:m-trace}. For the moment, let us simply point out that Proposition~\ref{P:m-trace_def} implies that every idempotent $p \in \TL(m)$ of $\Idf$ can be written as $p = u'u$ for some morphisms $u \in \TL(m,f_{r-1} \otimes m')$ and $u' \in \TL(f_{r-1} \otimes m',m)$. Furthermore, equations~\eqref{E:t_g_k_middle} and \eqref{E:t_h_k_middle} imply
\begin{align}\label{E:m-trace_TL}
 \rmt^{\TL}_{g_m}(g_m) &= (-1)^m \{ m+1 \}', & 
 \rmt^{\TL}_{g_m}(h_m) &= (-1)^{m+1} [m+1]
\end{align} 
for all $r \leqs m \leqs 2r-2$. See also \cite[Corollary~3.4]{HW19} for more existence results concerning modified traces, and \cite[Proposition~3.12]{TW19} for related formulas.

\section{3-Manifold invariant}\label{S:3-manifold}

In this section we introduce bichrome links, and we define a topological invariant $Z_r$ of closed 3-manifolds decorated with admissible ones. We also explain how to use $Z_r$ to obtain a topological invariant $Y_r$ of closed 3-manifolds without decorations, and show that this invariant recovers the $\SO(3)$ WRT invariant $\WRT_r$ for rational homology spheres. Every 3-manifold is assumed to be oriented.

\subsection{Bichrome tangles}\label{S:bichrome_tangles}

The notion of bichrome link will be based on a few preliminary definitions. First of all, the \textit{category $\ribTL$ of $\TL$-labeled oriented framed tangles} is the category whose objects are finite sequences $(\myuline{\epsilon},\myuline{p}) = ((\epsilon_1,p_1), \ldots, (\epsilon_j,p_j))$, where $\epsilon_i \in \{ +,- \}$ is a sign and $p_i \in \TL(m_i)$ is an idempotent for every integer $1 \leqs i \leqs j$, and whose morphisms from $(\myuline{\epsilon},\myuline{p}) = ((\epsilon_1,p_1), \ldots, (\epsilon_j,p_j))$ to $(\myuline{\epsilon'},\myuline{p'}) = ((\epsilon'_1,p'_1), \ldots, (\epsilon'_{j'},p'_{j'}))$ are isotopy classes of framed $(j,j')$-tangles whose components carry orientations and labels given by idempotents of $\TL$ matching those specified by $(\myuline{\epsilon},\myuline{p})$ and $(\myuline{\epsilon'},\myuline{p'})$, with $+$ and $-$ corresponding to upward and downward orientation respectively. Composition of morphisms of $\ribTL$ is given by gluing vertically two copies of $I^3$ and then shrinking the result into $I^3$. We will adopt a shorthand notation omitting signs for positive sequences. In other words, a sequence $\myuline{p} = (p_1,\ldots,p_j)$ will stand for the object $((+,p_1),\ldots,(+,p_j))$ of $\ribTL$. Furthermore, an object of $\ribTL$ which is not underlined will stand for a sequence with a single entry, and $\varnothing$ will stand for the empty sequence. 

The category $\ribTL$ naturally supports a ribbon structure. Tensor product of objects of $\ribTL$ is given by taking their concatenation, while tensor product of morphisms of $\ribTL$ is given by gluing horizontally two copies of $I^3$ and then shrinking the result into $I^3$. When representing graphically a morphism of $\ribTL$, we use the color blue. Then, left and right evaluation and coevaluation, braiding, and twist morphisms are defined, for all $p,p' \in \ribTL$, by
\begin{align}
 \lev_p &= \pic{structure_morphisms_lev_R} &
 \lcoev_p &= \pic{structure_morphisms_lcoev_R} &
 \rev_p &= \pic{structure_morphisms_rev_R} &
 \rcoev_p &= \pic{structure_morphisms_rcoev_R} \label{E:rib_ev_coev} \\*[5pt]
 & &
 c_{p,p'} &= \pic{structure_morphisms_braiding_R} &
 \theta_p &= \pic{structure_morphisms_twist_R} \label{E:rib_braiding_twist}
 & &
\end{align}
There exists a ribbon functor
\[
 \langle \_ \rangle : \ribTL \to \TL,
\]
which we refer to as the \textit{Kauffman bracket functor}, satisfying
\begin{align}\label{E:cabling_functor}
 \left\langle \pic{structure_morphisms_cabling_1} \right\rangle &= \pic{structure_morphisms_cabling_2} &
 \left\langle \pic{structure_morphisms_cabling_3} \right\rangle &= \pic{structure_morphisms_cabling_4}
\end{align}
for every idempotent $p \in \TL(m)$, and sending structure morphisms of equations~\eqref{E:TL_ev_coev} and \eqref{E:TL_braiding_twist} to those of equations~\eqref{E:rib_ev_coev} and \eqref{E:rib_braiding_twist} respectively.

For every $k \in \N$ a \textit{bichrome $k$-top tangle from $(\myuline{\epsilon},\myuline{p})$ to $(\myuline{\epsilon'},\myuline{p'})$}, sometimes simply called a \textit{top tangle}, is the union of a \textit{blue} $\TL$-labeled oriented framed tangle from $(\myuline{\epsilon},\myuline{p})$ to $(\myuline{\epsilon'},\myuline{p'})$ and a \textit{red} framed $(0,2k)$-tangle satisfying the following condition: for every $1 \leqs i \leqs k$, the $2i$th and $2i-1$th outgoing boundary points (starting from the left) are connected by a red component, while all the other incoming and outgoing boundary points belong to blue components. An example of a bichrome $2$-top tangle is given in Figure~\ref{F:bichrome_tangle}. In order to distinguish red components from blue ones in black-and-white versions of this paper, remark that red components are unoriented and unlabeled, while blue ones are oriented and labeled.

\begin{figure}[t]
 \centering
 \includegraphics{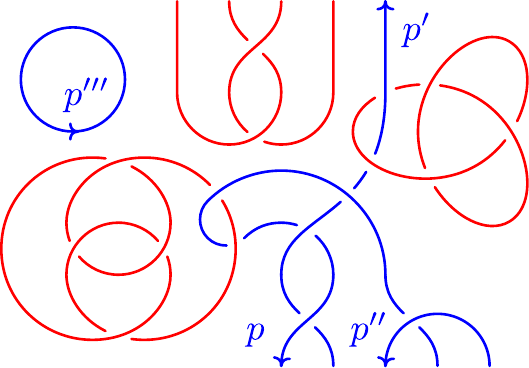}
 \caption{An example of a bichrome $2$-top tangle with source $((-,p),(+,p'),(-,p''),(+,p),(+,p''))$ and target $(+,p')$.}
\label{F:bichrome_tangle}
\end{figure}

We denote by $\calT_k(\myuline{p},\myuline{p'})$ the set of isotopy classes of bichrome $k$-top tangles from $\myuline{p}$ to $\myuline{p'}$ featuring no closed red component, and we adopt the shorthand notation $\calT_k(\myuline{p})$ when $\myuline{p} = \myuline{p'}$. A bichrome $k$-top tangle is simply called a \textit{bichrome tangle} if $k=0$. We denote by $\calB(\myuline{p},\myuline{p'})$ the set of isotopy classes of bichrome tangles from $\myuline{p}$ to $\myuline{p'}$, and we adopt the shorthand notation $\calB(\myuline{p})$ when $\myuline{p} = \myuline{p'}$. Every bichrome $k$-top tangle $T \in \calT_k(\myuline{p},\myuline{p'})$ determines a bichrome tangle $\pcl(T) \in \calB(\myuline{p},\myuline{p'})$ obtained by considering its \textit{plat closure}, that is,
\begin{equation}\label{E:plat_closure}
 \pcl \left( \pic{plat_closure_1} \right) := \pic{plat_closure_2} \hspace*{-5pt}
\end{equation}
Then, a \textit{top tangle presentation} of a bichrome tangle $T \in \calB(\myuline{p},\myuline{p'})$ is a bichrome top tangle $T' \in \calT_k(\myuline{p},\myuline{p'})$ whose plat closure is $T$. By definition, a top tangle presentation of a bichrome tangle has no closed red component.

A \textit{bichrome link} is a bichrome tangle from $\varnothing$ to itself. A bichrome link is \textit{admissible} if it features a \textit{projective blue component}, which is a blue component labeled by an idempotent $p \in \TL(m)$ of $\Idf$. If $p \in \TL(m)$ is an idempotent of $\Idf$, then every bichrome tangle $T \in \calB(p)$ determines an admissible bichrome link $\tcl(T) \in \calB(\varnothing)$ obtained by considering its \textit{trace closure}, that is,
\begin{equation}\label{E:trace_closure}
 \tcl \left( \pic{trace_closure_1} \right) := \pic{trace_closure_2}
\end{equation}
A \textit{cutting presentation} of an admissible bichrome link $T \in \calB(\varnothing)$ is a top tangle presentation $T'' \in \calT_k(p)$ of a bichrome tangle $T' \in \calB(p)$ whose trace closure is $T$, for some idempotent $p \in \TL(m)$ of $\Idf$. In other words, if $T \in \calB(\varnothing)$ is an admissible bichrome link and $T' \in \calT_k(p)$ is a cutting presentation of $T$, then
\begin{equation*}
 \pic{cutting_presentation_def_1} = \pic{cutting_presentation_def_2}
\end{equation*}

\subsection{Ribbon Kauffman bracket}\label{S:ribbon_bracket}

In order to define a topological invariant of admissible bichrome links, we first need to adjust the sign in front of the Kauffman bracket appropriately. The reason for this is rather subtle, and will be explained more carefully in Section~\ref{S:ribbon_structures}, once the definition of the small quantum group $\bar{U} := \bar{U}_q \fsl_2$ will have been recalled. Very briefly, the problem originates from the comparison between the Temperley--Lieb category $\TL$ and the category of finite-dimensional representations $\mods{\bar{U}}$. Indeed, both are ribbon categories, and a dictionary between the two is provided
by a faithful linear functor $F_\TL : \TL \to \mods{\bar{U}}$ which preserves braided monoidal structures. However, $F_\TL$ does not preserve ribbon structures. In particular, partial traces and twist morphisms of $\TL$ are translated to those of $\mods{\bar{U}}$ only up to a sign. If we want the diagrammatic construction based on $\TL$ to replicate the algebraic computation based on $\bar{U}$, as defined in \cite{DGP17}, this sign needs to be controlled. This is precisely what we will do now, following \cite[Theorem~H.3]{O02}.

First of all, we recall that, if $K$ and $K'$ are disjoint oriented knots in $S^3$, then their \textit{linking number $\lk(K,K')$} is defined as the transverse intersection number $S \pitchfork K' \in \Z$, where $S$ is a Seifert surface for $K$. Alternatively, $\lk(K,K')$ can be computed as the difference between the number of positive and negative crossings of $K$ over $K'$ in a diagram of $K \cup K'$. We also recall that, if $K$ is a framed oriented knot in $S^3$, its \textit{framing number $\slk(K)$} is defined as $\lk(K,\tilde{K}) \in \Z$, where $\tilde{K}$ is a parallel copy of $K$ determined by its framing. The framing number is actually independent of the orientation of $K$.

A \textit{blue link} is a bichrome link without red components. If $T$ is a blue link with components $T_1, \ldots, T_\ell$ labeled by $p_1 \in \TL(m_1), \ldots, p_\ell \in \TL(m_\ell)$ respectively, we define its \textit{ribbon number $\rb(T)$} as the integer
\begin{equation}\label{E:ribbon_number}
 \rb(T) := \sum_{i=1}^\ell m_i \left( \slk(T_i) + 1 \right).
\end{equation}
The \textit{ribbon Kauffman bracket of a blue link $T$} is the scalar $\langle T \rangle^\rb \in \TL(0) = \C$ defined by
\begin{equation}\label{E:ribbon_bracket}
 \langle T \rangle^\rb := (-1)^{\rb(T)} \langle T \rangle,
\end{equation}
as considered by Ohtsuki in \cite[Theorem~H.3]{O02}.

Let now $r-1 \leqs n_1,\ldots,n_k \leqs 2r-2$ be integers. 
The \textit{$(n_1,\ldots,n_k)$-labeling of a bichrome top tangle $T \in \calT_k(p)$} is defined as the blue $\TL$-labeled oriented framed tangle $\lb_{n_1,\ldots,n_k}(T)$ from $(+,p)$ to $((+,n_1),(-,n_1),\ldots,(+,n_k),(-,n_k),(+,p))$ obtained from $T$ by turning all its red components blue, by orienting them from right to left, and by labeling the $i$th one by $n_i$ for all $1 \leqs i \leqs k$, that is,
\begin{equation}\label{E:labeling_top_tangle}
 \lb_{n_1,\ldots,n_k} \left( \pic{labeling_1} \right) := \pic{labeling_2} 
\end{equation}
Then the \textit{$(n_1,\ldots,n_k)$-bracket of a bichrome top tangle $T \in \calT_k(p)$} is the morphism $\langle T \rangle_{n_1,\ldots,n_k} \in \TL(p,n_1 \otimes n_1 \otimes \ldots \otimes n_k \otimes n_k \otimes p)$ defined by
\begin{equation}\label{E:bracket_top_tangle}
 \langle T \rangle_{n_1,\ldots,n_k} := \langle \lb_{n_1,\ldots,n_k}(T) \rangle.
\end{equation}
We can extend the ribbon number defined in equation~\eqref{E:ribbon_number} for blue links to the $(n_1,\ldots,n_k)$-labeling of a bichrome top tangle $T \in \calT_k(p)$ with a single non-closed blue component $B$ labeled by $p \in \TL(m)$, with closed blue components $B_1, \ldots, B_j$ labeled by $p_1 \in \TL(m_1), \ldots, p_j \in \TL(m_\ell)$, and with red components $R_1, \ldots, R_k$, by setting, in the notation of equations~\eqref{E:plat_closure} and \eqref{E:trace_closure} for plat and trace closures,
\begin{equation}\label{E:ribbon_number_top_tangle}
 \rb(\lb_{n_1,\ldots,n_k}(T)) := m \ \slk( \tcl(B) ) + \sum_{i=1}^j m_i \left( \slk(B_i) + 1 \right) + \sum_{i=1}^k n_i \ \slk( \pcl(R_i) ).
\end{equation}
The \textit{ribbon $(n_1,\ldots,n_k)$-bracket of a bichrome top tangle $T \in \calT_k(p)$} is the morphism $\langle T \rangle_{n_1,\ldots,n_k}^\rb \in \TL(p,n_1 \otimes n_1 \otimes \ldots \otimes n_k \otimes n_k \otimes p)$ defined by
\begin{equation}\label{E:ribbon_bracket_top_tangle}
 \langle T \rangle_{n_1,\ldots,n_k}^\rb := (-1)^{\rb(\lb_{n_1,\ldots,n_k}(T))} \langle T \rangle_{n_1,\ldots,n_k}.
\end{equation}

\subsection{Bichrome link invariant}\label{S:bichrome_link_invariant}

Let us define a topological invariant of admissible bichrome links. In order to do this, let us set
\begin{align*}
 \pic{int_char_t_r-1} &:= \pic{int_char_t_r-1_a} \in \TL(f_{r-1} \otimes f_{r-1},g_{2r-2}), \\
 \pic{int_char_t_m} &:= \pic{int_char_t_m_a} \in \TL(g_m \otimes g_m^*,g_{2r-2}), \\
 \pic{int_char_t_m_prime} &:= \pic{int_char_t_m_prime_a} \in \TL(g_m \otimes g_m^*,g_{2r-2})
\end{align*}
for every integer $r \leqs m \leqs 2r-2$. Remark that we are using the notation introduced in equation~\eqref{E:morphism_completion}, which means we are considering Jones--Wenzl idempotents as objects of $\TL$. See Section~\ref{SS:level_3} for an explicit computation in the case $r=3$.

\begin{definition}\label{D:ns_Kirby_color} 
 The \textit{non-semisimple Kirby color $\nsKirby$} is the linear combination 
 of morphisms
 \[
  \nsKirby := \sum_{m=r-1}^{2r-2} \nsKirby_m \in \bigoplus_{m=r-1}^{2r-2} \TL(m \otimes m,2r-2),
 \]
 where $\nsKirby_m \in \TL(m \otimes m,2r-2)$ is given by
 \begin{align*}
  \nsKirby_{r-1} &:= t_{r-1}, &
  \nsKirby_m &:= (-1)^m \frac{\{ m+1 \}'}{2} t_m - (-1)^m [m+1] t'_m.
 \end{align*}
\end{definition}

If $p \in \TL(m)$ is an idempotent of $\Idf$, the non-semisimple Kirby color $\nsKirby$ can be used to associate with every bichrome $k$-top tangle $T \in \calT_k(p)$ an endomorphism $F_{\nsKirby,p}(T) \in \TL(p)$, in the notation of equation~\eqref{E:morphism_completion}. Indeed, if $u \in \TL(m,f_{r-1} \otimes m')$ and $u' \in \TL(f_{r-1} \otimes m',m)$ are morphisms satisfying $p = u'u$, then, using the ribbon Kauffman bracket of equation~\eqref{E:ribbon_bracket_top_tangle}, which coincides with the standard Kauffman bracket of equation~\eqref{E:bracket_top_tangle} up to a sign, we can set
\[
 F_{\nsKirby,p} \left( \pic{Kirby_map_1} \right) := \hspace*{-10pt} \sum_{n_1,\ldots,n_k = r-1}^{2r-2} \pic[37.5]{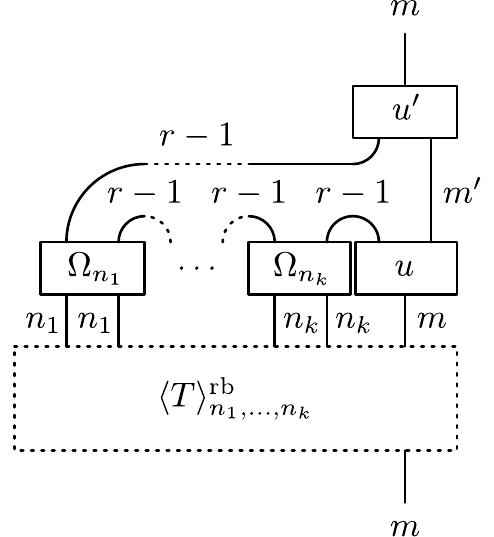}
\]

\begin{proposition}\label{P:ad_bic_link_inv}
 If $T \in \calB(\varnothing)$ is an admissible bichrome link, $p \in \TL(m)$ is an idempotent of $\Idf$, and $T' \in \calT_k(p)$ is a cutting presentation of $T$, then
 \begin{equation}\label{E:ad_bic_link_inv}
  F'_\nsKirby(T) := (-1)^m \rmt^{\TL}_p(F_{\nsKirby,p}(T'))
 \end{equation}
 is a topological invariant of $T$, meaning it is independent of the choice of $p$ and $T'$.
\end{proposition}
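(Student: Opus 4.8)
The plan is to reduce the statement to the invariance of the scalar $(-1)^m\rmt^{\TL}_p(F_{\nsKirby,p}(T'))$ under a short list of moves relating any two cutting presentations of a fixed admissible bichrome link $T\in\calB(\varnothing)$, and to check each move using the axioms of a trace on an ideal (Proposition~\ref{P:m-trace_def}) together with the functoriality of the Kauffman bracket functor $\langle\_\rangle:\ribTL\to\TL$. The two closure operations built into a cutting presentation---plat closure $\pcl$ of the red top tangle and trace closure $\tcl$ of the distinguished projective blue component---are treated in turn. For the first, one shows that $F_{\nsKirby,p}(T')$, for a fixed choice of $u,u'$ with $p=u'u$, depends on the bichrome top tangle $T'\in\calT_k(p)$ only through $\pcl(T')\in\calB(p)$: since the blue part of $T'$ is a morphism of the ribbon category $\ribTL$ and $\langle\_\rangle$ is a ribbon functor, it suffices to see that the recipe ``cable the $i$th former red component by $n_i$ strands, cap it with the coupon $\nsKirby_{n_i}$, and sum over $r-1\leqs n_i\leqs 2r-2$'' yields an isotopy invariant of each \emph{closed} red component, i.e.\ that the inserted morphism can be slid around the loop and is insensitive to its (a priori absent) orientation. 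This is the non-semisimple analogue of the well-definedness of the semisimple invariant $F_\ssKirby$ (revisiting the construction of \cite{B92}); the new point is that $\nsKirby_m$ for $m\geqs r$ is built from the idempotents $g_m$ and the morphisms $t_m,t'_m$ with target $g_{2r-2}$, and the needed symmetry follows from the rotation identities around \eqref{E:t_g_k_middle}--\eqref{E:t_h_k_middle}, the self-duality $g_{2r-2}^*=g_{2r-2}$, and the modified-trace closure of the $g_{2r-2}$-output.

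For the trace closure one then shows that $(-1)^m\rmt^{\TL}_p(F_{\nsKirby,p}(T_0))$, regarded as a function of $T_0\in\calB(p)$, depends only on $\tcl(T_0)$ and is in particular independent of $p$ and of $u,u'$. Independence of $u,u'$ and of the point at which the distinguished blue component is cut is exactly the cyclicity axiom~(1) of $\rmt^{\TL}$: a different choice of $u,u'$ conjugates the argument of $\rmt^{\TL}_p$ by an isomorphism of the idempotent completion, while moving the cut point rotates it cyclically, and cyclicity absorbs both (this also retroactively makes formula~\eqref{E:ad_bic_link_inv} unambiguous). Independence of $p$ is obtained by \emph{unfolding}: since $\Idf$ is generated by $f_{r-1}$ (Proposition~\ref{P:m-trace_def}), cyclicity rewrites $\rmt^{\TL}_p(F_{\nsKirby,p}(T_0))$ as $\rmt^{\TL}_{f_{r-1}\otimes m'}$ of a related morphism, and the partial trace axiom~(2) rewrites the latter as $\rmt^{\TL}_{f_{r-1}}(\ptr_{m'}(\,\cdot\,))$; thus every cutting presentation is reduced to one whose distinguished blue component is labeled by $f_{r-1}$, and the normalization $\rmt^{\TL}_{f_{r-1}}(f_{r-1})=1$ together with the moves already handled forces all of them to give the same value. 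The remaining possibility, that $T$ carries more than one projective blue component and a different one is cut, is reduced to the above by isotoping two such components into ``top tangle position'' and using the partial trace axiom together with the braiding to transfer $\rmt^{\TL}$ from the label of one to that of the other.

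I expect the main difficulty to be twofold, and to consist essentially of bookkeeping. First, the isotopy invariance of a $\nsKirby$-colored closed red component used in the first stage is the genuinely new diagrammatic input: it is what the normalization constants of Definition~\ref{D:ns_Kirby_color} are engineered to produce, and it rests on the identities for non-semisimple Jones--Wenzl idempotents proved in Section~\ref{S:computations}. Second, throughout the trace-closure stage one must carry along both the sign $(-1)^m$ of \eqref{E:ad_bic_link_inv} and the signs encoded in the ribbon number \eqref{E:ribbon_number_top_tangle}: changing $p$ alters $m$, the cabling multiplicities, and the framings of the closed blue components, and the assertion that the resulting sign changes cancel is precisely the reason the ribbon Kauffman bracket of \eqref{E:ribbon_bracket}--\eqref{E:ribbon_bracket_top_tangle}, rather than the plain Kauffman bracket, occurs in the definition. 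Everything else follows routinely from the axioms of $\rmt^{\TL}$ and the functoriality of $\langle\_\rangle$, or from a verbatim adaptation of the classical semisimple argument.
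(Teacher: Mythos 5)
Your overall architecture is the right shape, and the second stage (independence of the cut point on the distinguished blue component, of the factorization $p = u'u$, and of which projective blue component is cut) is handled correctly by the cyclicity and partial-trace axioms of $\rmt^{\TL}$; this matches the final step of the paper's proof, which runs the same argument for $\rmt^{\bar U}$ after transporting everything through $F_\TL$. However, there is a genuine gap at the crux of the first stage. You assert that the isotopy invariance of a $\nsKirby$-coloured red component --- concretely, the independence of the basepoint $p_i$, of the orientation chosen when cutting the $i$th red component open, and of the path $\gamma_i$ joining it to the top line --- ``follows from the rotation identities around \eqref{E:t_g_k_middle}--\eqref{E:t_h_k_middle}, the self-duality $g_{2r-2}^*=g_{2r-2}$, and the modified-trace closure.'' It does not: those identities compute partial traces of $g_m$ and $h_m$, whereas what is needed is a statement about the behaviour of the morphisms $t_m, t'_m$ (hence of $\nsKirby$ itself) under rotation by $\pi$ and under duality, i.e.\ the diagrammatic counterpart of the facts that $\lambda$ is a quantum character ($\lambda(xy)=\lambda(yS^2(x))$) and satisfies $\lambda(g^{-2}S(x))=\lambda(x)$. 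This is a substantive property of $\nsKirby$ that must be proved, not a formal consequence of the cited formulas; Remark~\ref{R:equivalence} covers only a fragment of it.

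The paper closes exactly this gap by a different route: it never establishes the symmetry of $\nsKirby$ diagrammatically. Instead it shows that $\nsKirby$ is a \emph{diagrammatic integral} (Definition~\ref{D:diagr_int}, Proposition~\ref{P:diagramatic_integral}, resting on the computations of Section~\ref{S:computations} and the reduction Lemma via $\ad_m^0$), uses the bead presentation and Proposition~\ref{P:beads} to rewrite $F_\TL(F_{\nsKirby,p}(T'))$ as $\bigl(\prod_{i}\lambda(x_i(T'))\bigr)F_{\bar U}(\tilde T')$, and then derives independence of path, orientation, and basepoint from the algebraic identities satisfied by the left integral $\lambda$ (quantum-character property, $\epsilon(R')R''=1$, $S(u)=g^{-1}v_+$, and $\lambda(g^{-2}S(x))=\lambda(x)$ from \cite{H96}). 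Your proof would become complete either by importing this translation (in which case it is essentially the paper's proof) or by supplying a direct diagrammatic proof that $\nsKirby$ is invariant under the relevant rotation and orientation-reversal moves; the latter is plausible but is precisely the new content you have deferred, and it is not contained in the identities you cite. A secondary, smaller omission: you never invoke anything that plays the role of the integral or the diagrammatic-integral property, yet without it the independence of the red basepoint has no source --- the trace axioms of $\rmt^{\TL}$ alone only govern the blue closure.
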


For a proof of Proposition~\ref{P:ad_bic_link_inv}, see Section~\ref{S:proof_bichrome_link_invariant}.

\begin{remark}\label{R:equivalence}
 Let us consider the equivalence relation $\sim$ on the vector space
 \[
  \bigoplus_{m=0}^{2r-2} \TL(m \otimes m,2r-2)
 \]
 determined, for all $0 \leqs m,n \leqs 2r-2$, $u \in \TL(n \otimes m,2r-2)$, and $v \in \TL(m,n)$, by 
 \[
  \pic{equivalence_1}
 \]
 Using the Kauffman bracket of equation~\eqref{E:bracket_top_tangle}, every $T \in \calT_1(\varnothing)$ satisfies
 \[
  \pic{equivalence_2}
 \] 
 Indeed, this can be shown using isotopy and the naturality of the braiding of $\TL$. Furthermore, $\TL(m,n)$ is non-zero only if $m-n$ is even, in which case
 \[
  (-1)^{m \ \slk(\pcl(T))} = (-1)^{n \ \slk(\pcl(T))}.
 \]
 Then it is easy to see that every linear combination of morphisms which is equivalent to the non-sem\-i\-sim\-ple Kirby color $\nsKirby$ determines the same topological invariant $F'_\nsKirby$.
\end{remark}

\subsection{3-Manifold invariant}\label{S:3-manifold_invariant}

We are now ready to define a topological invariant of closed 3-manifolds decorated with admissible bichrome links. The definition relies on the following computation.

\begin{lemma}\label{L:nstab}
 For every admissible bichrome link $T \in \calB(\varnothing)$ we have
 \begin{align}\label{E:nstab}
  F'_\nsKirby \left( \hspace*{-2.5pt} \pic{stabilization_+_ns} \right) &= \nstabp F'_\nsKirby(T), &
  F'_\nsKirby \left( \hspace*{-2.5pt} \pic{stabilization_-_ns} \right) &= \nstabm F'_\nsKirby(T),
 \end{align}
 where
 \begin{align*}
  \nstabp &:= i^{-\frac{r-1}{2}} r^{\frac 32} q^{\frac{r-3}{2}}, &
  \nstabm &:= i^{\frac{r-1}{2}} r^{\frac 32} q^{\frac{r+3}{2}}.
 \end{align*}
\end{lemma}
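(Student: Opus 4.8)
The plan is to compute the effect of adding a small red meridian around a single blue strand labeled $f_{r-1}$, passing from a cutting presentation of $T$ to one of the stabilized link, so that the entire calculation reduces to a local computation in $\TL$. Concretely, fix an idempotent $p \in \TL(m)$ of $\Idf$ and a cutting presentation $T' \in \calT_k(p)$ of $T$, and let $u \in \TL(m, f_{r-1} \otimes m')$, $u' \in \TL(f_{r-1}\otimes m', m)$ satisfy $p = u'u$. The stabilized links $T \cup (\pm 1)$-framed unknot can be given cutting presentations obtained from $T'$ by attaching one extra red component, which is a $(\pm 1)$-framed meridian encircling the $f_{r-1}$-strand produced by $u$; since the unknot is split from $T$, this meridian links only that single $f_{r-1}$ strand. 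Applying the definition of $F_{\nsKirby,p}$, this extra red component gets summed over labels $n = r-1, \ldots, 2r-2$, and the local picture near the meridian becomes, up to the ribbon sign of equation~\eqref{E:ribbon_bracket_top_tangle}, a closed $n$-labeled blue circle with framing $\pm 1$ encircling an $f_{r-1}$-labeled strand, with the morphism $\nsKirby_n$ attached.

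The first key step is thus to evaluate, for each $r-1 \leqs n \leqs 2r-2$, the local endomorphism of $f_{r-1}$ in $\TL$ obtained by encircling the $f_{r-1}$-strand with an $n$-labeled, $\pm 1$-framed circle carrying $\nsKirby_n$ and closing it up; by the naturality of the braiding and Schur-type arguments (the endomorphism algebra of $f_{r-1}$ is one-dimensional), this is a scalar times $\id_{f_{r-1}}$. Computing that scalar is the heart of the argument: it amounts to evaluating a twist-weighted partial trace. Using the definitions of $\nsKirby_{r-1} = t_{r-1}$ and $\nsKirby_m = (-1)^m \frac{\{m+1\}'}{2} t_m - (-1)^m[m+1] t'_m$, together with the values of $t_m, t'_m$ and the action of the twist on $g_m, h_m$ recorded in equations~\eqref{E:t_g_k_low}--\eqref{E:t_g_h_k_high} and the modified-trace formulas~\eqref{E:m-trace_TL}, one reduces the $\pm 1$-framed encircling to a weighted sum of $q^{\pm(\text{something})}$ over $n$. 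The resulting sums are essentially Gauss sums in the primitive $r$th root of unity $q = e^{2\pi i/r}$, and evaluating them yields the claimed constants $\nstabp = i^{-\frac{r-1}{2}} r^{3/2} q^{\frac{r-3}{2}}$ and $\nstabm = i^{\frac{r-1}{2}} r^{3/2} q^{\frac{r+3}{2}}$; here one must track carefully the $r^{3/2}$ rather than the $r^{1/2}$ appearing in $\sstabp, \sstabm$, which reflects the normalization $\rmt^{\TL}_{f_{r-1}}(f_{r-1}) = 1$ and the extra factor coming from the modified trace and the sign $(-1)^m$ in the definition~\eqref{E:ad_bic_link_inv} of $F'_\nsKirby$.

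Once the local scalar is identified, the second step is bookkeeping: replacing the local picture by $\nstabp$ (resp.\ $\nstabm$) times $\id_{f_{r-1}}$ turns the cutting presentation of the stabilized link back into the cutting presentation $T'$ of $T$ with an overall scalar, and one checks that the ribbon-number correction from equation~\eqref{E:ribbon_number_top_tangle} contributed by the new $\pm 1$-framed component and the framing change it induces is exactly absorbed into the constants, so that $F'_\nsKirby(T \cup (\pm 1)) = \nstabpm F'_\nsKirby(T)$. Finally, one should remark that the answer is independent of the choice of cutting presentation and of which $f_{r-1}$-strand the meridian encircles, which follows from Proposition~\ref{P:ad_bic_link_inv} (the unknot being split ensures the encircled strand can always be taken to be one coming from a projective blue component). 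I expect the main obstacle to be the sign and Gauss-sum computation in the first step: keeping consistent track of the ribbon signs $(-1)^{\rb}$, the signs $(-1)^m$ and $(-1)^{m+1}$ in $\nsKirby_m$ and in~\eqref{E:m-trace_TL}, the framing-induced twist signs, and the precise branch of $r^{1/2}$ and $i^{1/2}$, is delicate, and it is exactly this accounting that forces the factor $r^{3/2}$ and distinguishes $\nstabpm$ from the semisimple $\sstabpm$.
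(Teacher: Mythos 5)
Your approach is genuinely different from the paper's, and considerably more laborious. The paper's proof of Lemma~\ref{L:nstab} is a one-liner: by the proof of Proposition~\ref{P:ad_bic_link_inv} (which shows that evaluating $F_{\nsKirby,p}$ on a cutting presentation is the same as applying the left integral $\lambda$ to the beads collected by the HKR algorithm, thanks to Proposition~\ref{P:diagramatic_integral}), a split $\pm 1$-framed red unknot contributes exactly the scalar $\lambda(v_\mp)$; and the values $\lambda(v_\pm) = i^{\pm\frac{r-1}{2}} r^{3/2} q^{\frac{r\pm 3}{2}}$ are recorded in equation~\eqref{E:ns_stab_par}. In other words, the paper never touches the Kirby color $\nsKirby_n$ or the diagrammatic twist and modified-trace formulas at all in this step; all the hard work has been front-loaded into Propositions~\ref{P:ad_bic_link_inv} and \ref{P:diagramatic_integral}. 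Your plan instead redoes this algebraic comparison diagrammatically, summing $n$ from $r-1$ to $2r-2$, inserting twists via equations~\eqref{E:t_g_k_low}--\eqref{E:t_g_h_k_high}, and evaluating a Gauss sum; this would, if carried out, essentially re-derive $\lambda(v_\pm)$ graphically, which is more work for the same answer.

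Two points worth flagging in your write-up. First, the geometric description is off: a $\pm 1$-framed unknot split from $T$ does not ``encircle'' the $f_{r-1}$-strand and is not a meridian of it (a split component has linking number $0$ with everything in $T$). What actually connects it to the projective strand is the $g_{2r-2}$ output of $\nsKirby_n$, which threads through the $p$-strand in the definition of $F_{\nsKirby,p}$; the sentence ``since the unknot is split from $T$, this meridian links only that single $f_{r-1}$ strand'' is self-contradictory as stated. Second, the proposal stops at ``the resulting sums are essentially Gauss sums\ldots and evaluating them yields the claimed constants,'' which is precisely the step that must be executed and checked: the signs $(-1)^m$, the factor of $\tfrac 12$ in $\nsKirby_m$, the $(-1)^m$ in equation~\eqref{E:ad_bic_link_inv}, and the ribbon-number corrections all feed into that sum, and asserting the outcome without doing the bookkeeping leaves the proof incomplete. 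The paper's route through $\bar{U}$ and the recorded values of $\lambda(v_\pm)$ is what lets it avoid that delicate accounting.
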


A proof of Lemma~\ref{L:nstab} will be given in Section~\ref{S:proof_3-manifold_invariant}. We are now ready to recall our main statement.

\begin{theorem}\label{T:3-manifold_inv}
 If $M$ is a closed 3-manifold, $T \subset M$ is an admissible bichrome link, and $L \subset S^3$ is a red surgery presentation of $M$ with positive signature~$\sigma_+$ and negative signature~$\sigma_-$, then
 \begin{equation}\label{E:ad_3-manifold_inv}
  Z_r(M,T) := \frac{F'_\nsKirby(L \cup T)}{\nstabp^{\sigma_+}\nstabp^{\sigma_-}}
 \end{equation}
is a topological invariant of $(M,T)$, meaning it is independent of the choice of $L$.
\end{theorem}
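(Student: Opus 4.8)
The plan is to reduce the statement to two classical facts about surgery presentations: that any two red surgery presentations $L, L'$ of the same closed oriented $3$-manifold $M$ are related by a finite sequence of Kirby moves, and that the well-definedness of $Z_r(M,T)$ amounts to checking that $F'_\nsKirby(L \cup T)/\nstabp^{\sigma_+}\nstabm^{\sigma_-}$ is unchanged under each such move. Recall that there are two types of Kirby moves: the handle-slide (Kirby II), where one red component is slid over a parallel copy of another, and the stabilization (Kirby I), where one adds a disjoint red unknot of framing $\pm 1$. Since $L \cup T$ is an admissible bichrome link (admissibility coming from the projective blue component of $T$, which is untouched by moves on the red part), $F'_\nsKirby$ is defined on it by Proposition~\ref{P:ad_bic_link_inv}. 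Under a stabilization by a $\pm 1$-framed unknot, the signature $\sigma_\pm$ increases by one while $\sigma_\mp$ is unchanged, so Lemma~\ref{L:nstab} gives exactly that $F'_\nsKirby(L \cup T)$ is multiplied by $\nstabp$ or $\nstabm$ respectively, which cancels precisely against the change in the normalization factor $\nstabp^{\sigma_+}\nstabm^{\sigma_-}$. This handles Kirby I.

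The heart of the matter is invariance under the handle-slide move. First I would record that sliding a red component $R_j$ over a parallel copy of another red component $R_i$ is, after passing to a cutting presentation and opening the red components into blue ones colored by the Kirby color, the statement that $\nsKirby$ is \emph{Kirby-color}: roughly, that pushing a blue strand colored by an arbitrary idempotent of $\TL$ through an $\nsKirby$-colored component is equivalent (in the sense of Remark~\ref{R:equivalence}) to the original $\nsKirby$-colored component. Here the strategy of the proof outlined in Section~\ref{S:strategy_proof} enters decisively: the functor $F_\TL : \TL \to \mods{\bar U}$ is braided monoidal, and the main technical result of the paper (Section~\ref{SS:diagrammatic_integral}) says that completing the HKR algorithm with the diagrammatic evaluation based on $\nsKirby$ gives the same answer as completing it with the algebraic evaluation based on the left integral $\lambda$ of $\bar U$. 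The handle-slide invariance of $\lambda$ is a standard and well-documented property of left integrals of ribbon Hopf algebras (it is precisely the ``algebraic version of the invariance under Kirby II moves'' referred to in Section~\ref{S:strategy_proof}), and it transfers through $F_\TL$ to the statement that $\nsKirby$ is Kirby-color. Feeding this into the formula for $F'_\nsKirby$ via cutting presentations then shows $F'_\nsKirby(L \cup T) = F'_\nsKirby(L' \cup T)$ whenever $L, L'$ differ by a handle-slide, while $\sigma_\pm$ are both preserved by handle-slides, so the normalization is unaffected. Combining Kirby I and Kirby II invariance with the Kirby theorem yields that $Z_r(M,T)$ depends only on $(M,T)$.

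I would also need a small amount of care at two technical points. First, one must check that the cutting presentation used to compute $F'_\nsKirby(L \cup T)$ can be chosen compatibly on both sides of a Kirby move, i.e. that the blue projective component providing admissibility, together with the choice of $u, u'$ with $p = u'u$, can be kept fixed while the red part is modified locally; this is immediate since Kirby moves are supported away from the blue part, and Proposition~\ref{P:ad_bic_link_inv} already guarantees independence of the auxiliary choices. Second, there is the bookkeeping of signs: the ribbon $(n_1,\dots,n_k)$-bracket of equation~\eqref{E:ribbon_bracket_top_tangle} differs from the plain Kauffman bracket by $(-1)^{\rb(\lb_{n_1,\dots,n_k}(T))}$, and one must verify that under a handle-slide the change in the ribbon number is absorbed by the skein-theoretic identities for $f_m, g_m, h_m$ collected in Section~\ref{S:TL}; this is exactly the role of the sign correction introduced in Section~\ref{S:ribbon_bracket} so that the diagrammatic calculus matches the algebraic one, and is precisely what makes the transfer through $F_\TL$ sign-consistent.

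The main obstacle I anticipate is establishing that $\nsKirby$ is Kirby-color — i.e. the handle-slide invariance — without simply invoking the quantum group. A self-contained diagrammatic proof would require pushing an arbitrary idempotent of $\TL$ through the linear combination $\sum_{m=r-1}^{2r-2}\nsKirby_m$ and using the fusion-type relations \eqref{E:g_f}, \eqref{E:g_h} together with \eqref{E:m-trace_TL} to see everything recombine into $\nsKirby$ up to the equivalence of Remark~\ref{R:equivalence}; in the present paper this is handled indirectly by appealing to the integral $\lambda$ and the comparison of algorithms, so the real work is in the computations of Section~\ref{SS:diagrammatic_integral} and Section~\ref{S:computations}, and the proof of Theorem~\ref{T:3-manifold_inv} itself is then a short formal consequence of those results plus Lemma~\ref{L:nstab} and Kirby's theorem.
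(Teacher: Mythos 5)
Your proposal is correct and follows essentially the same route as the paper: reduce to Kirby's theorem, handle Kirby I via Lemma~\ref{L:nstab} and the signature bookkeeping, and obtain Kirby II invariance by transferring the handle-slide to the bead picture through the HKR comparison, where it becomes the defining property $\lambda(x_{(2)})x_{(1)} = \lambda(x)1$ of the left integral (the paper writes out exactly this computation with the R-matrix components). You also correctly locate the real work in Proposition~\ref{P:diagramatic_integral} and the computations of Section~\ref{S:computations}, with the theorem itself being a short formal consequence.
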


For a proof of Theorem~\ref{T:3-manifold_inv}, see Section~\ref{S:proof_3-manifold_invariant}. In the meantime, let us relate $Z_r$ to its semisimple counterpart by reviewing the construction of the $\SO(3)$ WRT invariant $\WRT_r$. In order to do this, let us set
\begin{align*}
 \pic{int_char_t_m_ss} &:= \pic{int_char_t_m_ss_a} \in \TL(f_m \otimes f_m,f_0)
\end{align*}
for every integer $0 \leqs m \leqs r-2$. The \textit{semisimple Kirby color $\ssKirby$} is the linear combination of morphisms
\[
 \ssKirby := \sum_{m=0}^{r-2} \ssKirby_m \in \bigoplus_{m=0}^{r-2} \TL(m \otimes m,0),
\]
where $\ssKirby_m \in \TL(m \otimes m,0)$ is given by
\[
 \ssKirby_m := 
 \begin{cases}
  [m+1] t_m & \mbox{if} \quad m \equiv 0 \mod 2, \\
  0 & \mbox{if} \quad m \equiv 1 \mod 2.
 \end{cases}
\]

\begin{remark}
 Another possibility is to consider the \textit{double Kirby color $\dsKirby$}, which is the linear combination of morphisms
 \[
  \dsKirby := \sum_{m=0}^{r-2} \dsKirby_m \in \bigoplus_{m=0}^{r-2} \TL(m \otimes m,0),
 \]
 where $\dsKirby_m \in \TL(m \otimes m,0)$ is given by
 \[
  \dsKirby_m := (-1)^m [m+1] t_m.
 \]
 However, the invariant obtained from $\dsKirby$ turns out to be essentially equivalent to the one obtained from $\ssKirby$. Furthermore, in order to extend $\WRT_r$ to a TQFT, only the (idempotent completion of the) full subcategory of $\TL$ whose objects are even integers should be considered. A proof of this fact, in the equivalent language of the small quantum group $\bar{U}_q \fsl_2$, is given in \cite[Section~5]{BD20}. Therefore, in accordance with \cite[Section~2.3]{CKS07}, we use $\ssKirby$ instead of $\dsKirby$.
\end{remark}

The semisimple Kirby color $\ssKirby$ can be used to associate with every bichrome $k$-top tangle $T \in \calT_k(\varnothing)$ a scalar $F_{\ssKirby}(T) \in \TL(0) = \C$. Indeed, using the Kauffman bracket of equation~\eqref{E:bracket_top_tangle}, we can set
\[
 F_{\ssKirby} \left( \pic[7.5]{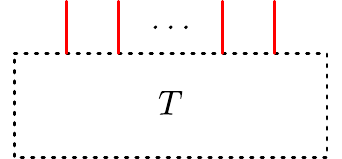} \right) := \hspace*{-10pt} \sum_{n_1,\ldots,n_k = 0}^{r-2} \pic[15]{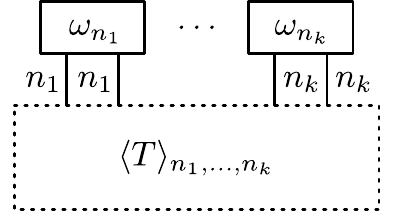}
\]
Remark that, since $\ssKirby$ is a linear combination of even terms, replacing the Kauffman bracket of equation~\eqref{E:bracket_top_tangle} with its ribbon version of equation~\eqref{E:ribbon_bracket_top_tangle} in the definition of $F_{\ssKirby}$ yields the same result. Now, if $T \in \calB(\varnothing)$ is a bichrome link and $T' \in \calT_k(\varnothing)$ is a top tangle presentation of $T$, then
\[
 F_\ssKirby(T) := F_\ssKirby(T')
\]
is a topological invariant of $T$, meaning it is independent of the choice of $T'$. Indeed, this is clear because the semisimple Kirby color $\ssKirby$ has target $0 \in \TL$, which means $F_\ssKirby(T)$ can be alternatively defined starting directly from a diagram of $T$, without ever choosing a top tangle presentation $T'$.

\begin{lemma}\label{L:sstab}
 We have
 \begin{align}\label{E:sstab}
  F_{\ssKirby} \left( \pic{stabilization_+_ss} \right) &= \sstabp, &
  F_{\ssKirby} \left( \pic{stabilization_-_ss} \right) &= \sstabm,
 \end{align}
 where
 \begin{align*}
  \sstabp &:= \frac{i^{-\frac{r-1}{2}} r^{\frac 12} q^{\frac{r-3}{2}}}{\{ 1 \}}, &
  \sstabm &:= -\frac{i^{\frac{r-1}{2}} r^{\frac 12} q^{\frac{r+3}{2}}}{\{ 1 \}}.
 \end{align*}
\end{lemma}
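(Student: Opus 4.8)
The plan is to unwind the definition of $F_{\ssKirby}$ and reduce the statement to a classical quadratic Gauss sum evaluation. First I would view the $+1$-framed red unknot (resp.\ the $-1$-framed one) as the plat closure of a top tangle in $\calT_1(\varnothing)$ consisting of a single red arc carrying one positive (resp.\ negative) curl, and insert the semisimple Kirby color. Using the fact, noted just after the definition of $F_{\ssKirby}$, that on $\ssKirby$ the ribbon Kauffman bracket of equation~\eqref{E:ribbon_bracket_top_tangle} agrees with the Kauffman bracket of equation~\eqref{E:bracket_top_tangle}, this expresses $F_{\ssKirby}$ of the $\pm 1$-framed red unknot as $\sum_{m}[m+1]\,\langle U_m^{\pm}\rangle$, the sum running over even integers $0 \leqs m \leqs r-2$, where $U_m^{\pm}$ is the $\pm 1$-framed unknot whose single blue component is labeled by $f_m$.

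Next I would evaluate $\langle U_m^{\pm}\rangle$ using two standard properties of simple Jones--Wenzl idempotents, both of which follow from the recursion~\eqref{E:f_0_def}--\eqref{E:f_m_def} together with the turnback identity~\eqref{E:t_f}: the $0$-framed unknot labeled by $f_m$ has Kauffman bracket $(-1)^m[m+1]$, and adding a $\pm 1$ curl to an $f_m$-labeled strand (i.e.\ applying $\theta_{f_m}^{\pm 1}$) multiplies the bracket by the twist eigenvalue $(-1)^m A^{\pm(m^2+2m)}$. Hence $\langle U_m^{\pm}\rangle = A^{\pm(m^2+2m)}[m+1]$, and the problem reduces to computing
\[
 \sum_{\substack{0 \leqs m \leqs r-2 \\ m \ \mathrm{even}}} [m+1]^2 A^{\pm(m^2+2m)}.
\]
Setting $m = 2j$ and using $A^2 = q$ (which follows from $A = q^{(r+1)/2}$ and $q^r = 1$), this becomes $\sum_{j=0}^{(r-3)/2}[2j+1]^2 q^{\pm 2j(j+1)}$. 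Expanding $\{1\}^2[2j+1]^2 = q^{2(2j+1)} - 2 + q^{-2(2j+1)}$, distributing over the three summands, and shifting the summation indices of the outer two sums so that all three become sums of $q^{2j(j+1)}$ over consecutive ranges of $j$, everything telescopes to an elementary correction term plus a fixed multiple of the complete Gauss sum $\sum_{j \in \Z/r\Z} q^{2j^2+2j}$. Completing the square modulo $r$, namely $2j^2 + 2j \equiv 2\!\left(j + \tfrac{r+1}{2}\right)^2 + \tfrac{r-1}{2} \pmod{r}$, identifies this with $q^{(r-1)/2}\sum_{j \in \Z/r\Z} q^{2j^2} = (-1)^{(r^2-1)/8}\, q^{(r-1)/2}\sum_{j \in \Z/r\Z} q^{j^2}$, and the classical quadratic Gauss sum gives $\sum_{j \in \Z/r\Z} q^{j^2} = r^{1/2}$ when $r \equiv 1 \bmod 4$ and $i\, r^{1/2}$ when $r \equiv 3 \bmod 4$.

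Finally I would reassemble the pieces: the elementary correction produces a factor $q^{(r-5)/2} - q^{(r-1)/2} = -q^{(r-3)/2}\{1\}$, which supplies the factor $q^{(r-3)/2}$ and the denominator $\{1\}$, while the product of $(-1)^{(r^2-1)/8}$ with the sign of the Gauss sum collapses to $i^{\mp(r-1)/2}$ by the supplementary law of quadratic reciprocity; this matches exactly the stated values of $\sstabp$ and $\sstabm$. The main obstacle is the sign bookkeeping in this last step, where one has to track simultaneously the signs $(-1)^m$, the orientation of the curl, the sign of the classical Gauss sum, and $(-1)^{(r^2-1)/8}$, along with carefully performing the index shifts in the reduction to the complete Gauss sum; the Gauss sum evaluation itself is classical, and a computation of stabilization constants of this kind already appears in the skein-theoretic literature on WRT invariants, so one may alternatively invoke \cite{L93, BHMV95}.
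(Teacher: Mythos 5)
Your reduction of $F_{\ssKirby}$ of the $\pm 1$-framed unknot to the sum $S_\pm := \sum_{j=0}^{(r-3)/2}[2j+1]^2q^{\pm 2j(j+1)}$ is correct, and it coincides with the intermediate expression $\sum_{m=0}^{(r-3)/2}q^{\frac{r\pm1}{2}m^2\pm m}[m+1]^2$ reached in the paper's proof. From that point on, however, your route is genuinely different. The paper never evaluates the Gauss sum: it folds the sum over even colors into a signed sum over $0\leqs m\leqs\frac{r-3}{2}$ via the symmetry $m\leftrightarrow r-m-2$, and then recognizes the result as $\pm\lambda(v_\mp)/(r\{1\})$ by pairing the decomposition of the left integral into quantum traces and pseudo quantum traces (Proposition~\ref{P:decomposition_integral}) against the Jordan decomposition of the ribbon element (Lemma~\ref{L:ribbon_Jordan}); the closed form is then read off from the recorded values~\eqref{E:ns_stab_par} of $\lambda(v_\pm)$. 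That detour is not only a way of outsourcing the Gauss sum: it is what ties $\sstabp,\sstabm$ to $\nstabp=\lambda(v_-)$ and $\nstabm=\lambda(v_+)$, which is used in the comparison with the Hennings invariant. Your direct evaluation is self-contained and closer in spirit to \cite{L93,BHMV95}.

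The substantive issue is exactly the sign you defer. Carrying out your telescoping explicitly, the boundary terms cancel completely and one finds, with $G:=\sum_{j\in\Z/r\Z}q^{2j(j+1)}$,
\begin{equation*}
 \{1\}^2\, S_+ = (q^{-2}-1)\,G = -q^{-1}\{1\}\,G,
 \qquad
 S_+ = -\frac{q^{-1}G}{\{1\}} = -\frac{i^{-\frac{r-1}{2}}\,r^{\frac12}\,q^{\frac{r-3}{2}}}{\{1\}},
\end{equation*}
after completing the square and inserting $(-1)^{(r^2-1)/8}$ and the classical Gauss sum as you describe. This is $-\sstabp$, not $\sstabp$. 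The case $r=3$ is an unambiguous test: there $S_+$ collapses to the single term $[1]^2=1$, while $\sstabp=i^{-1}\sqrt{3}/(i\sqrt{3})=-1$. So you cannot simply assert that the bookkeeping ``matches exactly the stated values'': either a compensating sign must be located in the curl and framing conventions feeding into $\langle U_m^\pm\rangle=A^{\pm(m^2+2m)}[m+1]$ (note that your value agrees with the paper's $\delta_{\pm,m}$ up to the ribbon sign $(-1)^m$, which is invisible on even colors, so this is unlikely to be the culprit), or the overall sign has to be traced back through the statement itself; the same tension is visible in the last displayed equality of the paper's own proof when tested at $r=3$. Until this global sign is pinned down explicitly, the proof is not complete.
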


A proof of Lemma~\ref{L:sstab} can be found in Section~\ref{S:proof_3-manifold_invariant}. The following result follows from \cite[Theorem~III.1]{B92}, where $\WRT_r$ is denoted $\tilde{\theta}_A$.

\begin{theorem}[Blanchet]
 If $M$ is a closed 3-manifold, $T \subset M$ is a bichrome link, and $L \subset S^3$ is a red surgery presentation of $M$ with positive signature~$\sigma_+$ and negative signature~$\sigma_-$, then
 \begin{equation}\label{E:3-manifold_inv}
  \WRT_r(M,T) := \frac{F_{\ssKirby}(L \cup T)}{\sstabp^{\sigma_+}\sstabm^{\sigma_-}}
 \end{equation}
 is a topological invariant of $(M,T)$, meaning it is independent of the choice of $L$. 
\end{theorem}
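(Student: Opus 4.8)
The plan is to prove Blanchet's theorem by the standard Kirby calculus argument: $F_\ssKirby(L\cup T)$, computed via the semisimple Kirby color $\ssKirby$ and the Kauffman bracket, is invariant under Kirby II moves (handle slides) on the red surgery link $L$, and transforms by the factors $\sstabp$ and $\sstabm$ under positive and negative Kirby I moves (blow-ups), as recorded in Lemma~\ref{L:sstab}. Since any two red surgery presentations of the same closed oriented $3$-manifold $M$ are related by a finite sequence of Kirby I and II moves (Kirby's theorem, in the refined signature-bookkeeping form that tracks $\sigma_+$ and $\sigma_-$ separately), and since the normalization by $\sstabp^{\sigma_+}\sstabm^{\sigma_-}$ exactly cancels the contribution of the Kirby I moves while the Kirby II moves contribute nothing, the ratio in \eqref{E:3-manifold_inv} depends only on $M$ and on the bichrome link $T$, not on $L$. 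The bichrome link $T$ is carried along passively throughout: its blue components are labeled idempotents of $\TL$ that are simply cabled by $\ssKirby$-colored components of $L$ under the sliding, and no admissibility is required here because the target object of $\ssKirby$ is $0\in\TL$, so $F_\ssKirby$ takes scalar values directly from any diagram.

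First I would make precise the invariance under Kirby II moves. The key point is the ``encircling lemma'': sliding a red component $R_j$ of $L$ over another component $R_i$ (or over a blue component of $T$) replaces, in the cabled diagram, a single $\ssKirby$-colored strand running parallel along $R_j$ by the same strand together with an extra fusion through the cable running along $R_i$; invariance amounts to the statement that an $\ssKirby$-colored meridian can be ``absorbed'' — concretely, that for each $0\leqs n\leqs r-2$ the $\ssKirby$-colored circle encircling an $f_n$-colored (or $n$-cabled) strand acts as multiplication by a scalar independent of whether the encircled strand is there, which is the defining property of a Kirby color and follows from the Kauffman bracket identities \eqref{E:f_f}--\eqref{E:t_f} together with the vanishing of $[m+1]$-weighted terms when $m$ lands in the ``bad'' range. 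In practice one cites Blanchet's \cite{B92} or Lickorish's \cite{L93} treatment for this computation; the content is that $\sum_{m}\ssKirby_m$ with the specified coefficients $[m+1]$ kills precisely the obstruction, using that $r$ is odd so that the $\SO(3)$-theory only sees even colors. Then the Kirby I moves: a positive (resp.\ negative) blow-up adds a disjoint $\pm1$-framed unknot to $L$, which by Lemma~\ref{L:sstab} multiplies $F_\ssKirby$ by $\sstabp$ (resp.\ $\sstabm$) and changes $\sigma_+$ (resp.\ $\sigma_-$) by $1$, so the normalized quantity is unchanged; one should also check the ``mixed'' blow-up where the new unknot links a component of $L\cup T$, which reduces to the disjoint case after a handle slide, hence is covered by Kirby II invariance.

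The remaining bookkeeping is the signature refinement of Kirby's theorem: the claim to invoke is that two framed links in $S^3$ presenting diffeomorphic closed oriented $3$-manifolds are related by handle slides (Kirby II) and \emph{both} stabilizations $\nstabp$-type ($+1$-framed unknot) \emph{and} $\nstabm$-type ($-1$-framed unknot), and that along any such sequence the pair $(\sigma_+,\sigma_-)$ changes only through the stabilizations, by $+1$ in the respective slot. This is classical (it is the same input used for the original WRT invariant and is discussed in \cite{BHMV95, L93}); combined with the two transformation rules above it yields that $F_\ssKirby(L\cup T)\big/\bigl(\sstabp^{\sigma_+}\sstabm^{\sigma_-}\bigr)$ is a well-defined function of $(M,T)$. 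I expect the main obstacle — or rather the main place where care is needed — to be the encircling/handle-slide computation establishing Kirby II invariance, i.e.\ verifying that the specific linear combination defining $\ssKirby$ (with the odd-level constraint making the odd-color terms irrelevant) really does make an encircling $\ssKirby$-colored component transparent up to a scalar; everything else is formal normalization. Since this is Blanchet's result \cite[Theorem~III.1]{B92}, in the paper it suffices to record the statement and reference, noting that the normalization constants have been matched to those of Lemma~\ref{L:sstab}.
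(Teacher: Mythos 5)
Your proposal is correct and takes essentially the same route as the paper: the paper offers no independent proof, recording the statement as a direct consequence of Blanchet's \cite[Theorem~III.1]{B92} with the normalization matched to Lemma~\ref{L:sstab}, which is exactly where your argument lands. The only caveat is that your paraphrase of Kirby~II invariance (an $\ssKirby$-colored meridian acting by a scalar independent of the encircled strand) is really the killing lemma rather than the handle-slide property actually needed, but since you defer that computation to \cite{B92,L93} this does not affect the argument.
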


If $\rmb_1(M)$ denotes the first Betti number of $M$, let us set
\[
 h_1(M) = 
 \begin{cases}
  \left| H_1(M) \right| & \mbox{if} \quad \rmb_1(M) = 0, \\
  0 & \mbox{if} \quad \rmb_1(M) > 0.
 \end{cases}
\]

\begin{proposition}\label{P:connected_sum}
 If $M$ and $M'$ are closed 3-manifolds, $T \subset M$ is a bichrome link, and $T' \subset M'$ is an admissible bichrome link, then
 \[
  Z_r(M \# M',T \cup T') = h_1(M) \WRT_r(M,T) Z_r(M',T').
 \]
\end{proposition}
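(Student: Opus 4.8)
The plan is to deduce the formula from two ingredients: the standard fact that a split (distant) union of surgery presentations presents a connected sum, and a multiplicativity property of the bichrome link invariant $F'_\nsKirby$ under distant union. First I would fix red surgery presentations $L \subset S^3$ of $M$ and $L' \subset S^3$ of $M'$, with positive/negative signatures $(\sigma_+,\sigma_-)$ and $(\sigma'_+,\sigma'_-)$. Representing $T$ inside a ball containing $L$ and $T'$ inside a disjoint ball containing $L'$, the distant union $(L \cup T) \sqcup (L' \cup T')$ is a red surgery presentation of $(M \# M', T \cup T')$: the red part $L \sqcup L'$ presents $M \# M'$, the associated linking matrix is block diagonal so the signatures add up to $(\sigma_+ + \sigma'_+, \sigma_- + \sigma'_-)$, and the resulting bichrome link is admissible because $L' \cup T'$ is. Thus $Z_r(M \# M', T \cup T')$ equals $F'_\nsKirby\big((L \cup T) \sqcup (L' \cup T')\big)$ divided by $\nstabp^{\sigma_+ + \sigma'_+}\nstabm^{\sigma_- + \sigma'_-}$, and everything reduces to understanding this numerator.

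The heart of the matter is the following multiplicativity claim: for every bichrome link $D$ and every admissible bichrome link $D'$,
\[
 F'_\nsKirby(D \sqcup D') = F'_\nsKirby(D \sqcup O)\cdot F'_\nsKirby(D'),
\]
where $O$ is the framing-$0$ blue unknot labeled by $f_{r-1}$. To establish it, I would build a cutting presentation of $D \sqcup D'$ out of split pieces: choose a projective blue component of $D'$, cut it to obtain a cutting presentation $T''_2 \in \calT_{k_2}(p)$ of $D'$ with $p \in \Idf$, choose a top tangle presentation $T''_1 \in \calT_{k_1}(\varnothing)$ of the link $D$, and note that $T''_1 \sqcup T''_2 \in \calT_{k_1 + k_2}(p)$ is then a cutting presentation of $D \sqcup D'$. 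In the computation of $F_{\nsKirby,p}(T''_1 \sqcup T''_2)$ the distant sub-diagram $T''_1$ is closed, so by monoidality of the Kauffman bracket functor and linearity it contributes a scalar $c(D) \in \C$ while the remainder reproduces $F_{\nsKirby,p}(T''_2)$; here one uses that the ribbon number of equation~\eqref{E:ribbon_number_top_tangle} is additive over the split, and that the sign $(-1)^m$ appearing in $F'_\nsKirby$ only sees the cut projective object, which we have taken inside $D'$. Applying $(-1)^m\rmt^{\TL}_p(\,\cdot\,)$ and using its $\C$-linearity gives $F'_\nsKirby(D \sqcup D') = c(D)\cdot F'_\nsKirby(D')$. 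Specialising to $D' = O$, a direct computation with equation~\eqref{E:m-trace_def} (using that $r$ is odd) shows $F'_\nsKirby(O) = (-1)^{r-1}\rmt^{\TL}_{f_{r-1}}(f_{r-1}) = 1$, whence $c(D) = F'_\nsKirby(D \sqcup O)$; in particular $c(D)$ is independent of the chosen presentation of $D$, which is also guaranteed \emph{a posteriori} by Proposition~\ref{P:ad_bic_link_inv} applied to the admissible links $D \sqcup O$ and $D'$.

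Applying the claim with $D = L \cup T$ and $D' = L' \cup T'$ and factoring the stabilisation denominator yields
\[
 Z_r(M \# M', T \cup T') = \frac{F'_\nsKirby\big((L \cup T) \sqcup O\big)}{\nstabp^{\sigma_+}\nstabm^{\sigma_-}}\cdot\frac{F'_\nsKirby(L' \cup T')}{\nstabp^{\sigma'_+}\nstabm^{\sigma'_-}}.
\]
The second factor is $Z_r(M',T')$ by Theorem~\ref{T:3-manifold_inv}. The first factor is $Z_r(M \# S^3, T \cup O)$, since $(L \cup T) \sqcup O$ is a red surgery presentation of $(M \# S^3, T \cup O)$ with signatures $(\sigma_+,\sigma_-)$; that is, it equals $Y_r(M,T)$. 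Substituting the identity $Y_r(M,T) = h_1(M)\,\WRT_r(M,T)$, which follows from \cite[Theorem~1]{CKS07} as explained in Section~\ref{S:3-manifold_invariant}, we obtain the claimed formula.

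The step I expect to be the main obstacle is the multiplicativity claim, or rather the careful verification that \emph{every} piece of data entering $F'_\nsKirby$ respects distant union: that a cutting presentation of $D \sqcup D'$ may be taken split with the projective cut lying entirely inside $D'$; that the closed sub-diagram $T''_1$, once contracted against the non-semisimple Kirby color, really collapses to a single scalar; that the ribbon sign $(-1)^{\rb}$ and the prefactor $(-1)^m$ decompose along the split as required; and that this scalar does not depend on the top tangle presentation of $D$ (for which the auxiliary unknot $O$, together with Proposition~\ref{P:ad_bic_link_inv}, provides the cleanest argument). By contrast, the remaining ingredients — additivity of signatures for split surgery links, the homeomorphism $M \# S^3 \cong M$, and the comparison $Y_r = h_1\,\WRT_r$ — are either elementary or already available.
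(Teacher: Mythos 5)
Your multiplicativity claim $F'_\nsKirby(D \sqcup D') = F'_\nsKirby(D \sqcup O)\cdot F'_\nsKirby(D')$, established by factoring a split cutting presentation and tracking the ribbon number and the $(-1)^m$ prefactor through the split, is a genuine and essentially correct diagrammatic observation that the paper does not spell out. The paper's own proof is considerably terser: it simply asserts that $Z_r(M \# M', T \cup T') = \HKR_r(M,T)\,Z_r(M',T')$ ``follows from the construction'' (citing analogous statements in the cited references for the renormalized Hennings side), and then invokes \cite[Theorem~1]{CKS07}. The crucial point is that the paper identifies the connected-sum factor directly with the Hennings invariant $\HKR_r(M,T)$, and it is precisely that identification which makes the appeal to CKS07 legitimate.

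This is where your argument has a genuine gap. Your multiplicativity reduces the statement to the special case $M' = S^3$, $T' = O$, i.e.\ to the identity $Y_r(M,T) = h_1(M)\,\WRT_r(M,T)$, and you then try to quote this identity from Section~\ref{S:3-manifold_invariant}. But Section~\ref{S:3-manifold_invariant} derives $Y_r = h_1\,\WRT_r$ \emph{from} Proposition~\ref{P:connected_sum} — the very statement you are proving — by specializing $M' = S^3$ and $T' = O$, so the citation is circular. Moreover, \cite[Theorem~1]{CKS07} concerns the Hennings invariant $\HKR_r$, not $Y_r$, so applying it requires first showing $Y_r = \HKR_r$; this is exactly the algebraic content compressed into the paper's phrase ``follows from the construction''. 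To close the gap you would need to argue that the scalar $c(D)$ extracted from the split diagram is, after dividing by $\nstabp^{\sigma_+}\nstabm^{\sigma_-}$, the Hennings invariant $\HKR_r(M,T)$ — which amounts to invoking the comparison between the non-semisimple Kirby color and the left integral $\lambda$ carried out in Proposition~\ref{P:diagramatic_integral} and the proof of Proposition~\ref{P:ad_bic_link_inv}. With that identification in hand, CKS07 indeed finishes the argument; without it, the final substitution is unjustified.
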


For a proof of Proposition~\ref{P:connected_sum}, see Section~\ref{S:proof_3-manifold_invariant}. For the moment, let us draw a few simple consequences from it. First of all, if both $T$ and $T'$ are admissible, then
\[
 Z_r(M \# M',T \cup T') = 0,
\]
because $\WRT_r$ vanishes against 3-manifolds decorated with admissible bichrome links, as a consequence of equation~\eqref{E:t_f} with $m = r-1$. Next, if $M$ is a closed 3-manifold and $O \subset S^3$ is a blue unknot of framing 0 and label $f_{r-1}$, then
\[
 Y_r(M) := Z_r(M \# S^3,O)
\]
is a topological invariant of $M$. Furthermore, the chosen normalization of the modified trace $\rmt$ on $\Idf$ implies, by definition, that
\[
 Z_r(S^3,O) = 1.
\]
Then Proposition~\ref{P:connected_sum} immediately yields
\[
 Y_r(M) = h_1(M) \WRT_r(M).
\]

\subsection{Level 3}\label{SS:level_3}

Let us unpack some definitions for the first level $r = 3$. In this case, we have $q = e^{\frac{2 \pi i}{3}}$, which means $[2] = -1$. For what concerns both simple and non-semisimple Jones--Wenzl idempotents, as well as their nilpotent endomorphisms, a direct computation gives
\begin{align*}
 \pic{level_3_f_2} &= \pic{level_3_f_2_1} - \pic{level_3_f_2_2} \\[10pt] 
 \pic{level_3_g_3} &= \pic{level_3_g_3_1} - \pic{level_3_g_3_2} \\[10pt] 
 \pic{level_3_h_3} &= \pic{level_3_h_3_1} - \pic{level_3_h_3_2} - \pic{level_3_h_3_3} + \pic{level_3_h_3_4} \\[10pt] 
 \pic{level_3_g_4} &= \pic{level_3_g_4_01} - \pic{level_3_g_4_02} - \pic{level_3_g_4_03} - \pic{level_3_g_4_04} - \pic{level_3_g_4_05} + \pic{level_3_g_4_06} + \pic{level_3_g_4_07} \\*[10pt] 
 &\phantom{{}={}} \mathllap{+} \pic{level_3_g_4_08} - \pic{level_3_g_4_09} + 2 \cdot \hspace*{-4.75pt} \pic{level_3_g_4_10} + 2 \cdot \hspace*{-4.75pt} \pic{level_3_g_4_11} - 3 \cdot \hspace*{-4.75pt} \pic{level_3_g_4_12} \\[10pt] 
 \pic{level_3_h_4} &= \pic{level_3_h_4_1} - \pic{level_3_h_4_2} - \pic{level_3_h_4_3} + \pic{level_3_h_4_4}
\end{align*}
For what concerns components of the non-semisimple Kirby color, we have
\begin{align*}
 \pic{level_3_t_2} &\sim \pic{level_3_t_2_1} - \pic{level_3_t_2_2} - \pic{level_3_t_2_3} + \pic{level_3_t_2_4} \\[10pt] 
 \pic{level_3_t_3} &\sim \pic{level_3_t_3_1} - \pic{level_3_t_3_2} - \pic{level_3_t_3_3} + \pic{level_3_t_3_4} \\[10pt] 
 \pic{level_3_t_prime_3} &\sim \pic{level_3_t_prime_3_1} - \pic{level_3_t_prime_3_2} + \pic{level_3_t_prime_3_3} - \pic{level_3_t_prime_3_4} \\*[10pt] 
 &\phantom{{}={}} \mathllap{-} \pic{level_3_t_3_1} + 2 \cdot \hspace*{-4.75pt} \pic{level_3_t_3_2} - \pic{level_3_t_3_4} \\[10pt] 
 \pic{level_3_t_4} &\sim \pic{level_3_t_4_1} - \pic{level_3_t_4_2} - 3 \cdot \hspace*{-4.75pt} \pic{level_3_t_4_3} + 3 \cdot \hspace*{-4.75pt} \pic{level_3_t_4_4} \\*[10pt] 
 &\phantom{{}={}} \mathllap{{}+ 2 \cdot{}} \pic{level_3_t_4_5} - 2 \cdot \hspace*{-4.75pt} \pic{level_3_t_4_6} \\[10pt] 
 \pic{level_3_t_prime_4} &\sim \pic{level_3_t_prime_4_1} - \pic{level_3_t_prime_4_2} + \pic{level_3_t_prime_4_3} - \pic{level_3_t_prime_4_4} \\*[10pt] 
 &\phantom{{}={}} \mathllap{-} \pic{level_3_t_4_1} + 2 \cdot \hspace*{-4.75pt} \pic{level_3_t_4_2} \\*[10pt] 
 &\phantom{{}={}} \mathllap{{}- 2 \cdot{}} \pic{level_3_t_prime_4_5} + 2 \cdot \pic{level_3_t_prime_4_6} - 2 \cdot \pic{level_3_t_prime_4_7} + 2 \cdot \hspace*{-4.75pt} \pic{level_3_t_prime_4_8} \\*[10pt] 
 &\phantom{{}={}} \mathllap{{}+ 3 \cdot{}} \pic{level_3_t_4_3} - 6 \cdot \hspace*{-4.75pt} \pic{level_3_t_4_4} - \pic{level_3_t_4_5} + 3 \cdot \hspace*{-4.75pt} \pic{level_3_t_4_6}
\end{align*}
where $\sim$ denotes the equivalence relation introduced in Remark~\ref{R:equivalence}.

\section{Small quantum group}\label{S:small_quantum_group}

In this section we recall the definition of the small quantum group of $\fsl_2$ at odd roots of unity, which was first given by Lusztig in \cite{L90}, as well as crucial results concerning its representation theory.

\subsection{Definition}\label{SS:small_quantum_group}

We denote by $\bar{U} = \bar{U}_q \fsl_2$ the \textit{small quantum group of $\fsl_2$}, which is defined as the algebra over $\C$ with generators $\{ E,F,K \}$ and relations
\begin{gather*}
 E^r = F^r = 0, \qquad K^r = 1, \\*
 K E K^{-1} = q^2 E, \qquad K F K^{-1} = q^{-2} F, \qquad [E,F] = \frac{K - K^{-1}}{q-q^{-1}}.
\end{gather*}
A Poincaré--Birkhoff--Witt basis of $\bar{U}$ is given by
\[
 \left\{ E^a F^b K^c \mid \ 0 \leqs a,b,c \leqs r - 1 \right\}.
\]
We make $\bar{U}$ into a Hopf algebra by setting
\begin{align*}
 \Delta(E) &= E \otimes K + 1 \otimes E, & \varepsilon(E) &= 0, & S(E) &= -E K^{-1}, \\
 \Delta(F) &= K^{-1} \otimes F + F \otimes 1, & \varepsilon(F) &= 0, & S(F) &= - K F, \\
 \Delta(K) &= K \otimes K, & \varepsilon(K) &= 1, & S(K) &= K^{-1}.
\end{align*}
We also fix an \textit{R-matrix} $R = R' \otimes R'' \in \bar{U} \otimes \bar{U}$ given by
\[
 R := \frac{1}{r} \sum_{a,b,c=0}^{r-1} \frac{\{ 1 \}^a}{[a]!}
 q^{\frac{a(a-1)}{2} - 2bc} K^b E^a \otimes K^c F^a,
\]
whose inverse $R^{-1} = S(R') \otimes R'' \in \bar{U} \otimes \bar{U}$ is given by
\[
 R^{-1} = \frac{1}{r} \sum_{a,b,c=0}^{r-1} \frac{\{ -1 \}^a}{[a]!}
 q^{-\frac{a(a-1)}{2} + 2bc} E^a K^b \otimes F^a K^c.
\]
Furthermore, we fix a \textit{ribbon element} $v_+ \in \bar{U}$ given by
\[
 v_+ := \frac{i^{\frac{r-1}{2}}}{\sqrt{r}} \sum_{a,b = 0}^{r - 1} \frac{\{ -1 \}^a}{[a]!} q^{-\frac{a(a-1)}{2} + \frac{(r+1)(a-b-1)^2}{2}} F^a K^b E^a,
\]
whose inverse $v_- \in \bar{U}$ is given by
\[
 v_- = \frac{i^{-\frac{r-1}{2}}}{\sqrt{r}} \sum_{a,b = 0}^{r - 1} \frac{\{ 1 \}^a}{[a]!} q^{\frac{a(a-1)}{2} + \frac{(r-1)(a+b-1)^2}{2}} F^a K^b E^a.
\]
These data make $\bar{U}$ into a ribbon Hopf algebra, and also determine further additional structures. For instance, the unique \textit{pivotal element} $g \in \bar{U}$ compatible with the specified ribbon structure, in the sense that $g = S(R'')R'v_-$, is given by $g := K$, see \cite[Proposition~XIV.6.5]{K95}. Furthermore, the \textit{M-matrix} $M = M' \otimes M'' \in \bar{U} \otimes \bar{U}$ defined by 
\[
 M := R_{21}R_{12},
\]
where $R_{12} = R$ is the R-matrix and $R_{21}$ is obtained from $R$ by reversing the order of its components, is given by
\[
 M = \frac{1}{r} \sum_{a,b,c,d=0}^{r-1} \frac{\{ 1 \}^{a+b}}{[a]![b]!} q^{\frac{a(a-1)+b(b-1)}{2} - 2 cd - (b+c)(b-d)} F^b K^c E^a \otimes E^b K^d F^a.
\]
This determines a linear map $D : \bar{U}^* \to \bar{U}$ defined by
\[
 D(\phi) := \phi(M') M''
\]
for every $\phi \in \bar{U}^*$. Maps of this form were first considered in \cite[Proposition~3.3]{D90}, and $D$ is therefore known as the \textit{Drinfeld map}. The ribbon Hopf algebra $\bar{U}$ is \textit{factorizable} in the sense that $D$ is an isomorphism, as first shown in \cite[Corollary~A.3.3]{L94}, see also \cite[Example~3.4.3]{M95}. In particular, thanks to the construction of \cite{DGP17}, the small quantum group $\bar{U}$ gives rise to a topological invariant of closed 3-manifolds, and more generally to a TQFT.

\subsection{Center}\label{S:center}

Let us describe the center $Z(\bar{U})$ of the algebra $\bar{U}$, which has been studied in detail by Kerler in \cite{K94} starting from the \textit{quantum Casimir element}
\begin{equation}\label{E:Casimir_def}
 C := EF + \frac{q^{-1}K + qK^{-1}}{\{ 1 \}^2} = FE + \frac{qK + q^{-1}K^{-1}}{\{ 1 \}^2} \in Z(\bar{U}).
\end{equation}
The minimal polynomial of $C$ is
\[
 \Psi(X) = \prod_{m=0}^{r-1} ( X - \beta_m ) = ( X - \beta_{r-1} ) \prod_{m=0}^{\frac{r-3}{2}} ( X - \beta_m )^2,
\]
where
\[
 \beta_m := \frac{\{ m+1 \}'}{\{ 1 \}^2}.
\]
Indeed, $\beta_{r-m-2} = \beta_m$ for every integer $0 \leqs m \leqs r-2$. If we set
\begin{align*}
 \Psi_{r-1}(X) &= \frac{\Psi(X)}{( X - \beta_{r-1} )}, &
 \Psi_m(X) &= \frac{\Psi(X)}{( X - \beta_m )^2}
\end{align*}
for every integer $0 \leqs m \leqs r-2$, then we can define the central elements
\begin{align}
 e_{r-1} &:= \frac{\Psi_{r-1}(C)}{\Psi_{r-1} ( \beta_{r-1} )} \in Z(\bar{U}), \label{E:central_idem_r-1} \\*
 e_m &:= \frac{\Psi_m(C)}{\Psi_m ( \beta_m )} - \frac{\Psi'_m ( \beta_m )( C - \beta_m ) \Psi_m(C)}{\Psi_m ( \beta_m )^2} \in Z(\bar{U}), \label{E:central_idem_m} \\*
 w_m &:= \frac{( C - \beta_m ) \Psi_m(C)}{\Psi_m ( \beta_m )} \in Z(\bar{U}). \label{E:central_nil_m}
\end{align}
Furthermore, if we consider the non-central projector
\begin{equation}\label{E:non-central_proj}
 v_m := \frac{1}{r} \sum_{a = 0}^{r-1} q^{-am} K^a \in \bar{U}
\end{equation}
on the eigenspace of eigenvalue $q^m$ for the regular action of $K$ on $\bar{U}$, then setting
\begin{equation}\label{E:sum_of_non-central_proj}
 T_m := \sum_{j=0}^m v_{m-2j} \in \bar{U}
\end{equation}
for every integer $0 \leqs m \leqs r-2$ allows us to decompose 
\[
 w_m = w_m^+ + w_m^-
\]
for the central elements
\begin{align}
 w_m^+ &:= T_m \frac{( C - \beta_m ) \Psi_m(C)}{\Psi_m ( \beta_m )} \in Z(\bar{U}),  \label{E:central_nil_m_+} \\*
 w_m^- &:= (1-T_m) \frac{( C - \beta_m ) \Psi_m(C)}{\Psi_m ( \beta_m )} \in Z(\bar{U}).  \label{E:central_nil_m_-}
\end{align}

It is proven in \cite[Lemma~14]{K94} that a basis of $Z(\bar{U})$ is given by
\begin{align*}
 \{ e_{r-1} \} \cup \left\{ e_m,w_m^+,w_m^- \Bigm| 0 \leqs m \leqs \frac{r-3}{2} \right\},
\end{align*}
and that basis vectors satisfy
\begin{align}\label{E:products}
 e_m e_{m'} &= \delta_{m,m'} e_m, & 
 w^\epsilon_m e_{m'} &= \delta_{m,m'} w^\epsilon_m, & 
 w^\epsilon_m w^{\epsilon'}_{m'} &= 0.
\end{align}
Remark that $e_{r-m-2} = e_m$ and $w_{r-m-2}^\epsilon = w_m^{- \epsilon}$ for every integer $0 \leqs m \leqs r-2$. For future convenience, we also set
\begin{align*}
 e_m &:= e_{2r-m-2} & w_m^\epsilon &:= w_{2r-m-2}^\epsilon & w_m &:= w_{2r-m-2}
\end{align*}
for every integer $r \leqs m \leqs 2r-2$.

\begin{lemma}\label{L:ribbon_Jordan}
 The ribbon element and its inverse $v_+,v_- \in Z(\bar{U})$ admit the Jordan decompositions
 \begin{equation*}
  v_\pm = q^{\frac{r \pm 1}{2}} e_{r-1} + \sum_{m=0}^{\frac{r-3}{2}} q^{\frac{r \mp 1}{2} m^2 \mp m} \left( e_m \mp \frac{(m+1) \{ 1 \}}{[m+1]} w_m^+ \mp \frac{(m-r+1) \{ 1 \}}{[m+1]} w_m^- \right).
 \end{equation*}
\end{lemma}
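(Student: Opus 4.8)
The plan is to diagonalize, as much as possible, the action of the ribbon element $v_\pm$ on each indecomposable summand of the regular representation of $\bar U$, using the fact that $v_\pm$ is central and that the quantum Casimir element $C$ acts on each such summand by a matrix with a single Jordan block of size $\leqs 2$. Concretely, since $v_+$ lies in $Z(\bar U)$, it is a polynomial in $C$, say $v_+ = \varphi(C)$ for some polynomial $\varphi$; the same holds for $v_-$ with a polynomial $\varphi^{-1}$ representing the inverse. The central idempotent $e_{r-1}$ cuts out the block where $C$ acts as the scalar $\beta_{r-1}$, while on the block corresponding to $0 \leqs m \leqs \frac{r-3}{2}$ we have $C = \beta_m\,e_m + w_m$ with $w_m$ nilpotent of square zero (this is exactly the content of \eqref{E:products}: $w_m^\epsilon w_{m'}^{\epsilon'} = 0$, so $(C-\beta_m e_m)^2 = w_m^2 = 0$ on that block). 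Therefore, on each such block, any polynomial $\varphi$ evaluated at $C$ equals $\varphi(\beta_m)\,e_m + \varphi'(\beta_m)\,w_m$ by the first-order Taylor expansion — this is where the nilpotency $w_m^2 = 0$ is essential.

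First I would compute the scalar $\varphi(\beta_m)$, i.e.\ the eigenvalue by which $v_\pm$ acts on the simple module of highest weight $m$ (equivalently, the scalar part of its action on the corresponding projective). This is the standard ribbon eigenvalue: for the quantum $\fsl_2$ at our root of unity it is $q^{\frac{r\pm1}{2}m^2 \pm m}$ up to a normalization that one reads off from the explicit formula for $v_\pm$ given in Section~\ref{SS:small_quantum_group}; plugging $m = r-1$ reproduces the $e_{r-1}$-coefficient $q^{\frac{r\pm1}{2}}$ after simplification using $q^r = 1$. The honest way is to apply $v_\pm$ (using its PBW-expanded formula) to a highest weight vector $v$ with $Ev = 0$, $Kv = q^m v$: only the $a=0$ terms survive, the sum over $b$ of $q^{\cdots}$ collapses to the claimed power of $q$, and one checks the normalizing prefactor $\frac{i^{\pm\frac{r-1}{2}}}{\sqrt r}$ cancels against the Gauss sum.

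Next I would compute the derivative term, i.e.\ the coefficient of $w_m^\pm$. Here one uses $w_m = w_m^+ + w_m^-$ together with the observation that $w_m^+ = T_m w_m$ and $w_m^- = (1-T_m)w_m$ pick out, respectively, the "type-$m$" and "type-$(r-m-2)$" Jordan-block contributions — equivalently, $w_m^+$ lives in the $P_m$ summand and $w_m^-$ in the $P_{r-m-2}$ summand, and since $\beta_{r-m-2} = \beta_m$ but the highest weights differ by $r$, one obtains two different values for $\varphi'$ at the two weights. So the coefficient of $w_m^+$ should be $\varphi'$ evaluated at the weight-$m$ data and the coefficient of $w_m^-$ at the weight-$(r-m-2)$ data; expanding and using $\frac{d}{dm}\big(q^{\frac{r\mp1}{2}m^2\mp m}\big)$ type identities — more precisely, differentiating $\varphi$ with respect to the variable $X = C$ and using $\frac{dX}{\text{(weight direction)}}$ computed from \eqref{E:Casimir_def}, namely $\frac{d\beta_m}{dm} \sim \frac{\{1\}\{2m+2\}}{\{1\}^2}$-type expressions — one matches the claimed coefficients $\mp\frac{(m+1)\{1\}}{[m+1]}$ and $\mp\frac{(m-r+1)\{1\}}{[m+1]}$. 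The cleanest route is probably to act with $v_\pm$ on a length-2 Jordan string inside the projective cover $P_m$: apply $v_\pm$ to the socle vector (getting the scalar) and to a vector mapping onto the head (getting scalar plus a multiple of the socle), and read the multiplier; the modified-trace formulas \eqref{E:m-trace_TL} or the explicit structure of $P_m$ can be used to pin down the normalization of $w_m^\pm$ implicit in \eqref{E:central_nil_m_+}--\eqref{E:central_nil_m_-}.

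The main obstacle I anticipate is purely computational bookkeeping: correctly tracking the normalizations built into the definitions \eqref{E:central_idem_m}, \eqref{E:central_nil_m}, \eqref{E:central_nil_m_+}, \eqref{E:central_nil_m_-} of $e_m$ and $w_m^\pm$ (which involve $\Psi_m$ and its derivative at $\beta_m$) against the $i^{\pm\frac{r-1}{2}}/\sqrt r$ prefactor and the Gauss-sum in $v_\pm$, and getting all the signs $\pm$ consistent between the $v_+$ and $v_-$ cases. Everything is forced once one fixes the action of $v_\pm$ on a highest weight vector and on a Jordan partner, so there is no conceptual difficulty — but the algebra identities $q^r=1$, $\beta_{r-m-2}=\beta_m$, $[r-1]=\cdots$ etc.\ have to be applied carefully to bring the raw output into the stated closed form.
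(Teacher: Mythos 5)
Your opening step contains a genuine error that undermines the whole plan: $v_\pm$ is \emph{not} a polynomial in the quantum Casimir $C$. The subalgebra $\C[C] \subset Z(\bar{U})$ has dimension $r$ (the degree of the minimal polynomial $\Psi$), whereas $Z(\bar{U})$ has dimension $\frac{3r-1}{2}$; the elements $w_m^+$ and $w_m^-$ are central but only their sum $w_m$ lies in $\C[C]$. Your own Taylor-expansion argument makes this visible: if $v_\pm = \varphi(C)$, then on the block $\bar{Q}_m$ one gets $\varphi(\beta_m)e_m + \varphi'(\beta_m)\,w_m$, i.e.\ \emph{equal} coefficients on $w_m^+$ and $w_m^-$ — but the statement to be proved has coefficients $\frac{(m+1)\{1\}}{[m+1]}$ and $\frac{(m-r+1)\{1\}}{[m+1]}$, which differ by $\frac{r\{1\}}{[m+1]} \neq 0$. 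Your attempted repair ("two different values for $\varphi'$ at the two weights") is incoherent, since $\beta_{r-m-2} = \beta_m$ means there is only one point at which $\varphi'$ can be evaluated; the asymmetry between $w_m^+$ and $w_m^-$ comes from the non-central projectors $T_m$ in \eqref{E:central_nil_m_+}--\eqref{E:central_nil_m_-}, not from any derivative of a polynomial in $C$. So the Jordan-block/Taylor argument cannot produce the stated formula, and as written it would "prove" a false identity.

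The fallback you mention in passing — acting with $v_\pm$ on a highest weight vector of each simple to get the semisimple part, and on a length-two Jordan string inside each indecomposable projective $P_n$ to read off the nilpotent multiplier, then matching against the explicit action of $w_m^+$ and $w_m^-$ on the two non-isomorphic projectives inside $\bar{Q}_m$ — is a viable route, but it is precisely the part you do not carry out, and it is where all the content lies (one must use the PBW formula for $v_\pm$ with $a=1$ terms contributing, not just $a=0$). For what it is worth, the paper does none of this: it simply quotes Kerler's computation \cite[Lemma~15]{K94} for $v_-$, translates his conventions (his ribbon element is our $v_-$, his $E$ is our $\{1\}E$, his Casimir is $\{1\}C$), and then obtains $v_+$ from $v_-$ by inverting within the basis of $Z(\bar{U})$ using the product relations \eqref{E:products}. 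If you want a self-contained proof, you must replace the "polynomial in $C$" framework by the projective-module computation and actually perform it.
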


\begin{proof}
 The formula for $v_-$ is obtained from \cite[Lemma~15]{K94} by carefully comparing Kerler's conventions with ours. Indeed, what Kerler refers to as the ribbon element is actually $v_-$ in our notation. Furthermore, Kerler uses a different set of generators for the definition of $\bar{U}_q \fsl_2$, which can be obtained from ours by replacing $E$ with $\{ 1 \} E$. Consequently, Kerler's quantum Casimir element is equal to $\{ 1 \} C$, and his basis of $Z(\bar{U})$ is
 \begin{align*}
  \{ e_{r-1} \} \cup \left\{ e_m, \{ 1 \} w_m^+, \{ 1 \} w_m^- \Bigm| 0 \leqs m \leqs \frac{r-3}{2} \right\}.
 \end{align*}
 The formula for $v_+$ is obtained from the one for $v_-$ using equation~\eqref{E:products}.
\end{proof}

\subsection{Finite-dimensional representations}

The \textit{category $\mods{\bar{U}}$ of finite-di\-men\-sion\-al representations of $\bar{U}$} supports the structure of a ribbon category. In order to recall it, let us adopt Sweedler's notation for iterated coproducts, that is,
\[
 \Delta^{(m)}(x) = x_{(1)} \otimes \ldots \otimes x_{(m+1)} \in \bar{U}^{\otimes m+1}
\]
for all $x \in \bar{U}$ and $m \in \N$. Then the coproduct $\Delta$ is used to define, for all $V,W \in \mods{\bar{U}}$, the tensor product $V \otimes W$, which is determined by
\[
 x \cdot v \otimes w := (x_{(1)} \cdot v) \otimes (x_{(2)} \cdot w)
\]
for all $x \in \bar{U}$, $v \in V$, and $w \in W$. The counit $\epsilon$ is used to define the tensor unit $\C \in \mods{\bar{U}}$, which is determined by the trivial representation over $\C$ given by
\[
 x \cdot 1 := \epsilon(x)
\]
for every $x \in \bar{U}$. The antipode $S$ is used to define, for every $V \in \mods{\bar{U}}$, the two-sided dual $V^*$, which is determined by
\[
 (x \cdot \phi)(v) := \varphi(S(x) \cdot v)
\]
for all $x \in \bar{U}$, $\phi \in V^*$, and $v \in V$. The pivotal element $g$ and its inverse are used to define, for every $V \in \mods{\bar{U}}$, left and right evaluation and coevaluation morphisms $\lev_V : V^* \otimes V \to \C$, $\lcoev_V : \C \to V \otimes V^*$, $\rev_V : V \otimes V^* \to \C$, and $\rcoev_V : \C \to V^* \otimes V$, which are determined by
\begin{align*}
 \lev_V(\phi \otimes v) &:= \phi(v), &
 \lcoev_V(1) &:= \sum_{i=1}^n v_i \otimes \phi^i \\*
 \rev_V(v \otimes \phi) &:= \phi(g \cdot v), &
 \rcoev_V(1) &:= \sum_{i=1}^n \phi^i \otimes (g^{-1} \cdot v_i)
\end{align*}
for all $v \in V$ and $\phi \in V^*$, where $\{ v_i \in V \mid 1 \leqs i \leqs n \}$ and $\{ \phi^i \in V^* \mid 1 \leqs i \leqs n \}$ are dual bases. The R-matrix $R$ is used to define, for all $V,W \in \mods{\bar{U}}$, the braiding morphism $c_{V,W} : V \otimes W \to W \otimes V$, which is determined by
\[
 c_{V,W}(v \otimes w) := (R'' \cdot w) \otimes (R' \cdot v)
\]
for all $v \in V$ and $w \in W$. The inverse ribbon element $v_-$ is used to define, for every $W \in \mods{\bar{U}}$, the twist morphism $\theta_W : W \to W$, which is determined by
\[
 \theta_W(w) := v_- \cdot w
\]
for every $w \in W$.

\subsection{Simple and projective modules}\label{S:simple_indec_proj}

Let us recall the classification of \textit{simple} and \textit{indecomposable projective} $\bar{U}$-modules. For every integer $0 \leqs m \leqs r-1$ we denote by $X_m$ the simple $\bar{U}$-module with basis 
\[
 \{ a_j^m \mid 0 \leqs j \leqs m \}
\]
and action given, for all integers $0 \leqs j \leqs m$, by
\begin{align*}
 K \cdot a_j^m &= q^{m-2j} a_j^m, \\*
 E \cdot a_j^m &= [j][m-j+1] a_{j-1}^m, \\*
 F \cdot a_j^m &= a_{j+1}^m,
\end{align*}
where $a_{-1}^m := a_{m+1}^m := 0$. Among these, we highlight the \textit{fundamental representation} $X := X_1$, which is a monoidal generator for the family of $\bar{U}$-modules considered in this paper (in the sense that every simple and every indecomposable projective $\bar{U}$-module is a direct summand of a tensor power of $X$), and the \textit{Steinberg module} $X_{r-1}$, which is the only $\bar{U}$-module which is both simple and projective. Every simple object of $\mods{\bar{U}}$ is isomorphic to $X_m$ for some integer $0 \leqs m \leqs r-1$. Next, for every integer $r \leqs m \leqs 2r-2$ we denote by $P_m$ the indecomposable projective $\bar{U}$-module with basis 
\[
 \{ a_j^m, x_k^m, y_k^m, b_j^m \mid 0 \leqs j \leqs 2r-m-2, 0 \leqs k \leqs m-r \}
\] 
and action given, for all integers $0 \leqs j \leqs 2r-m-2$ and $0 \leqs k \leqs m-r$, by
\begin{align*}
 K \cdot a_j^m &= q^{-m-2j-2} a_j^m, \\*
 E \cdot a_j^m &= -[j][m+j+1] a_{j-1}^m, \\*
 F \cdot a_j^m &= a_{j+1}^m, \\
 K \cdot x_k^m &= q^{m-2k} x_k^m, \\*
 E \cdot x_k^m &= [k][m-k+1] x_{k-1}^m, \\*
 F \cdot x_k^m &= 
 \begin{cases}
  x_{k+1}^m & \mbox{if} \quad 0 \leqs k < m-r, \\
  a_0^m & \mbox{if} \quad k = m-r,
 \end{cases} \\
 K \cdot y_k^m &= q^{m-2k} y_k^m, \\*
 E \cdot y_k^m &= 
 \begin{cases}
  a_{2r-m-2}^m & \mbox{if} \quad k = 0, \\
  [k][m-k+1] y_{k-1}^m & \mbox{if} \quad 0 < k \leqs m-r,
 \end{cases} \\*
 F \cdot y_k^m &= y_{k+1}^m, \\
 K \cdot b_j^m &= q^{-m-2j-2} b_j^m, \\*
 E \cdot b_j^m &= 
 \begin{cases}
  x_{m-r}^m & \mbox{if} \quad j = 0, \\
  a_{j-1}^m - [j][m+j+1] b_{j-1}^m & \mbox{if} \quad 0 < j \leqs 2r-m-2, 
 \end{cases} \\*
 F \cdot b_j^m &= 
 \begin{cases}
  b_{j+1}^m & \mbox{if} \quad 0 \leqs j < 2r-m-2, \\
  y_0^m & \mbox{if} \quad j = 2r-m-2,
 \end{cases}
\end{align*}
where $a_{-1}^m := a_{2r-m-1}^m := x_{-1}^m := y_{m-r+1}^m := 0$. We point out that $P_m$ is sometimes denoted $P_{2r-m-2}$, because it is the projective cover of $X_{2r-m-2}$ for every integer $r \leqs m \leqs 2r-2$. Every indecomposable projective object of $\mods{\bar{U}}$ is isomorphic to either $X_{r-1}$ or $P_m$ for some integer $r \leqs m \leqs 2r-2$. 

Fusion formulas for decompositions of tensor products in $\mods{\bar{U}}$ are given by
\begin{align}
 X_{m-1} \otimes X_1 &\cong 
 \begin{cases}
  X_1 & \mbox{if} \quad m = 1, \\
  X_m \oplus X_{m-2} & \mbox{if} \quad 1 < m \leqs r-1, \\
  P_r & \mbox{if} \quad m = r,
 \end{cases} \label{E:simple} \\
 P_{m-1} \otimes X_1 &\cong 
 \begin{cases} 
  P_{r+1} \oplus X_{r-1} \oplus X_{r-1} & \mbox{if} \quad m = r+1, \\
  P_m \oplus P_{m-2} & \mbox{if} \quad r+1 < m \leqs 2r-2, \\
  X_{r-1} \oplus X_{r-1} \oplus P_{2r-3} & \mbox{if} \quad m = 2r-1.
 \end{cases} \label{E:indecomp_proj}
\end{align}
These formulas should be compared with equations~\eqref{E:f_0_def}-\eqref{E:f_m_def} and \eqref{E:g_r_def}-\eqref{E:g_m_def} using \cite[Lemma~4.1]{BDM19}.

As proved in \cite[Theorem~4.7.1]{GKP11}, the ideal $\IdX$ of projective objects of $\mods{\bar{U}}$ is generated by the Steinberg module $X_{r-1}$, and there exists a unique trace $\rmt^{\bar{U}}$ on $\IdX$ satisfying
\begin{equation}
 \rmt^{\bar{U}}_{X_{r-1}}(\id_{X_{r-1}}) = 1.
\end{equation}
Furthermore, $\rmt^{\bar{U}}$ is non-degenerate.

\subsection{Regular representation}

The regular representation of $\bar{U}$, which is determined by the regular action of $\bar{U}$ onto itself by left multiplication, decomposes into a direct sum of indecomposable projective $\bar{U}$-modules. Explicit bases for indecomposable projective factors of $\bar{U}$ are described in \cite[Section~4]{A08} when $r$ is even, but can be easily generalized to our case. Basis vectors are defined in terms of the non-central projectors $v_m \in \bar{U}$ given by equation~\eqref{E:non-central_proj} for $m \in \Z$. If, for every integer $0 \leqs j \leqs r-1$, we set
\[
 a_j^{r-1,n} := F^j E^{r-1} F^{r-n-1} v_{-2n-1},
\]
then
\[
 \left\{ a_j^{r-1,n} \in \bar{U} \Bigm| 0 \leqs j \leqs r-1 \right\}
\]
is a basis for a submodule $\bar{X}_{r-1,n}$ of $\bar{U}$ which is isomorphic to $X_{r-1}$. Similarly, if, for all integers $r \leqs m \leqs 2r-2$, $0 \leqs j \leqs 2r-m-2$, and $0 \leqs k \leqs m-r$, we set
\begin{align*}
 a_j^{m,n} &:= F^j E^{r-1} F^{r-n-1} v_{-m-2n-2}, \\*
 x_k^{m,n} &:= \sum_{h=0}^{m-r} \frac{[k]![h]!}{[m-k-r]![m-h-r]!} E^{m-k-h-1} F^{r-n-h-2} v_{-m-2n-2}, \\*
 y_k^{m,n} &:= \sum_{h=0}^{m-r} \frac{[m-r]![h]!}{[m-h-r]!} F^{2r-m+k-1} E^{r-h-2} F^{r-n-h-2} v_{-m-2n-2}, \\*
 b_j^{m,n} &:= \sum_{h=0}^{m-r} \frac{[m-r]![h]!}{[m-h-r]!} F^j E^{r-h-2} F^{r-n-h-2} v_{-m-2n-2},
\end{align*}
then
\[
 \left\{ a_j^{m,n}, x_k^{m,n}, y_k^{m,n}, b_j^{m,n} \in \bar{U} \Bigm| 0 \leqs j \leqs 2r-m-2, 0 \leqs k \leqs m-r \right\} 
\]
is a basis for a submodule $\bar{P}_{m,n}$ of $\bar{U}$ which is isomorphic to $P_m$. If we set 
\[
 \bar{X}_{r-1} := \bigoplus_{n=0}^{r-1} \bar{X}_{r-1,n}, \qquad
 \bar{P}_m := \bigoplus_{n=0}^{2r-m-2} \bar{P}_{m,n},
\]
then it can be easily checked that
\[
 \bar{U} \cong \bar{X}_{r-1} \oplus \bigoplus_{m=r}^{2r-2} \bar{P}_m.
\]

\subsection{Integral}\label{SS:decomposition_integral}

A key ingredient for the HKR approach to the construction of 3-manifold invariants are integrals, whose theory is well-established \cite{S69,R11}. A \textit{left integral} $\lambda \in \bar{U}^*$ and a \textit{right integral} $\mu \in \bar{U}^*$ are linear forms satisfying
\begin{align*}
 \lambda(x_{(2)}) x_{(1)} &= \lambda(x) 1 \in \bar{U}, &
 \mu(x_{(1)}) x_{(2)} &= \mu(x) 1 \in \bar{U}^*
\end{align*}
for every $x \in \bar{U}^*$. Since $\bar{U}$ is finite-di\-men\-sion\-al, left integrals and right integrals span one-di\-men\-sion\-al vector spaces. If $\lambda$ is a left integral, then $\lambda \circ S$ is a right integral, and similarly, if $\mu$ is a right integral, then $\mu \circ S$ is a left integral. Every left integral $\lambda \in \bar{U}^*$ satisfies
\begin{align}\label{E:quantum_char}
 \lambda(xy) &= \lambda(yS^2(x))
\end{align}
for all $x,y \in \bar{U}^*$, which means it can be regarded as an intertwiner between the trivial representation $\C$ and the \textit{adjoint representation} $\ad$, which is determined by the action of $\bar{U}$ onto itself given by
\[
 \ad_x(y) := x_{(1)}yS(x_{(2)})
\]
for all $x,y \in \bar{U}^*$. Every left integral $\lambda \in \bar{U}^*$ is of the form
\[
 \lambda \left( E^a F^b K^c \right) := \xi \delta_{a,r-1} \delta_{b,r-1} \delta_{c,r-1}
\]
for some $\xi \in \C$. For the purpose of our construction, it will be convenient to fix the left integral $\lambda \in \bar{U}^*$ determined by the coefficient
\[
 \xi := r ([r-1]!)^2 = \frac{r^3}{\{ 1 \}^{2r-2}} \in \C^*.
\]
The \textit{stabilization parameters} corresponding to this normalization are
\begin{align}\label{E:ns_stab_par}
 \lambda(v_+) &= i^{\frac{r-1}{2}} r^{\frac 32} q^{\frac{r+3}{2}}, &
 \lambda(v_-) &= i^{-\frac{r-1}{2}} r^{\frac 32} q^{\frac{r-3}{2}}.
\end{align}

Following the approach of \cite[Section~5]{A08}, we give an explicit decomposition of the left integral $\lambda \in \bar{U}^*$ into a linear combination of \textit{quantum traces} and \textit{pseudo quantum traces} corresponding to indecomposable projective $\bar{U}$-modules. This requires a few preliminary definitions. First of all, for every Hopf algebra $H$, we denote by $\QC(H)$ the space of \textit{quantum characters} of $H$, which are linear forms $\varphi \in H^*$ satisfying $\varphi(xy) = \varphi(yS^2(x))$ for all $x,y \in H$. Thanks to equation~\eqref{E:quantum_char}, left integrals are quantum characters. Similarly, for every algebra $A$, we denote by $\SLF(A)$ the space of \textit{symmetric linear functions} on $A$, which are linear forms $\varphi \in A^*$ satisfying $\varphi(xy) = \varphi(yx)$ for all $x,y \in A$. A pivotal element $g \in H$, which is a group-like element satisfying $gxg^{-1} = S^2(x)$ for every $x \in H$, defines an isomorphism between $\QC(H)$ and $\SLF(H)$ sending every quantum character $\phi$ to the symmetric linear function $\phi(\_ g^{-1})$. For a finite-dimensional ribbon Hopf algebra $H$ and a non-zero left integral $\lambda \in H^*$, \cite[Proposition~4.2]{H96} gives
\[
 \SLF(H) = \{ \lambda(\_ g^{-1}z) \in H^* \mid z \in Z(H) \}.
\]
Therefore, the idea is to use the basis of $Z(\bar{U})$ given in Section~\ref{S:center} in order to find a basis of $\QC(\bar{U})$, and then to compute the corresponding coefficients of $\lambda$.

If, for every integer $r-1 \leqs m \leqs 2r-2$, we denote by $\bar{Q}_m$ the generalized eigenspace of eigenvalue $\beta_m$ for the regular action of the quantum Casimir element $C \in Z(\bar{U})$ on the regular representation $\bar{U}$, then the same argument of \cite[Proposition~D.1.1]{FGST05} shows
\[
 \bar{Q}_m = 
 \begin{cases}
  \bar{X}_{r-1} & \mbox{if} \quad m = r-1, \\
  \bar{P}_m \oplus \bar{P}_{3r-m-2} & \mbox{if} \quad r \leqs m \leqs 2r-2.
 \end{cases}
\]
The regular action of the canonical central element $e_m \in Z(\bar{U})$ recovers the projector onto $\bar{Q}_m$ with respect to the decomposition
\[
 \bar{U} = \bigoplus_{m=r-1}^{3\frac{r-1}{2}} \bar{Q}_m,
\]
and a basis for $Z(\bar{Q}_m)$ is given by $\{ e_{r-1} \}$, if $m = r-1$, and by $\{ e_m,w_m^+,w_m^- \}$, if $r \leqs m \leqs 2r-2$. We will first decompose the symmetric linear form
\[
 \lambda(\_ K^{-1} e_m) \in \SLF(\bar{Q}_m)
\]
with respect to the corresponding basis of $\SLF(\bar{Q}_m)$.

If, for every integer $0 \leqs n \leqs r-1$, we set
\[
 A_j^{r-1,n} := \frac{1}{([r-1]!)^2} a_j^{r-1,n} \in \bar{X}_{r-1},
\]
then
\[
 \left\{ A_j^{r-1,n} \in \bar{X}_{r-1} \Bigm| 0 \leqs j,n \leqs r-1 \right\}
\]
is a basis of $\bar{X}_{r-1}$, and we can denote by 
\[
 \left\{ \psi^j_{r-1,n} \in \bar{X}^*_{r-1} \Bigm| 0 \leqs j,n \leqs r-1 \right\}
\]
the dual basis. Then, in the standard basis of $X_{r-1}$, the action of $\bar{X}_{r-1}$ is represented by the matrix 
\[
 \psi_{r-1} \in M_{r \times r}(\bar{X}^*_{r-1})
\]
whose $(j,n)$th entry is given by $\psi^j_{r-1,n}$. Therefore, if we set
\[
 \tau_{r-1} := \tr(\psi_{r-1}) = \sum_{n=0}^{r-1} \psi^n_{r-1,n},
\]
then $\{ \tau_{r-1} \}$ is a basis of $\SLF(\bar{Q}_{r-1})$. We call $\tau_{r-1}$ the \textit{$r-1$-trace}.

\begin{lemma}\label{L:dec_int_r-1}
 We have
 \[
  \lambda(\_ K^{-1} e_{r-1}) = \tau_{r-1}.
 \]
\end{lemma}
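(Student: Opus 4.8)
The plan is to use that $\SLF(\bar{Q}_{r-1})$ is one-dimensional, so the claimed identity reduces to matching a single scalar. By the discussion preceding the statement, $\lambda(\_\, K^{-1} e_{r-1})$ is a symmetric linear function supported on the block $\bar{Q}_{r-1} = \bar{X}_{r-1}$, hence lies in $\SLF(\bar{Q}_{r-1})$; as $\{\tau_{r-1}\}$ is a basis of this space, we obtain $\lambda(\_\, K^{-1} e_{r-1}) = c\,\tau_{r-1}$ for a unique $c \in \C$, and it remains to compute $c$ by evaluating both sides on one element of $\bar{Q}_{r-1}$ at which $\tau_{r-1}$ does not vanish.

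The natural choice is $x := A_0^{r-1,0} = ([r-1]!)^{-2}\, E^{r-1} F^{r-1} v_{-1} \in \bar{X}_{r-1,0} \subseteq \bar{Q}_{r-1}$. On the $\tau_{r-1}$ side, since $\tau_{r-1} = \sum_{n=0}^{r-1} \psi^n_{r-1,n}$ and $\psi^j_{r-1,n}$ is the basis of $\bar{X}^*_{r-1}$ dual to $\{A^{r-1,n}_j\}$, only the $n = 0$ summand contributes and $\tau_{r-1}(x) = 1$. On the $\lambda$ side, since $x \in \bar{Q}_{r-1}$ and $e_{r-1}$ is central and restricts to the identity of the block $\bar{Q}_{r-1}$, we have $x K^{-1} e_{r-1} = x K^{-1}$; from $v_m = r^{-1} \sum_{a=0}^{r-1} q^{-am} K^a$ one reads off $K v_{-1} = q^{-1} v_{-1}$, hence $v_{-1} K^{-1} = q\, v_{-1}$ and
\[
 x K^{-1} = \frac{q}{([r-1]!)^2}\, E^{r-1} F^{r-1} v_{-1} = \frac{q}{r\,([r-1]!)^2} \sum_{a=0}^{r-1} q^a\, E^{r-1} F^{r-1} K^a .
\]
Applying $\lambda$ and using $\lambda(E^{r-1} F^{r-1} K^a) = \xi\, \delta_{a,r-1}$ together with $q^{r-1} = q^{-1}$, only the $a = r-1$ term survives, so $\lambda(x K^{-1}) = \xi / (r\,([r-1]!)^2) = 1$ by the normalization $\xi = r\,([r-1]!)^2$. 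Hence $c = 1$.

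There is no serious obstacle here: the whole point is the reduction to the one-dimensional space $\SLF(\bar{Q}_{r-1})$, after which the scalar is pinned down by a single explicit evaluation on the basis vector $A_0^{r-1,0}$. The only aspects that need care are keeping the dualization conventions straight, so that $\tau_{r-1}(A_0^{r-1,0}) = 1$, and tracking the fixed normalization $\xi = r\,([r-1]!)^2$ of the left integral through the computation; both $K v_{-1} = q^{-1} v_{-1}$ and the value of $\lambda$ on the PBW basis are then immediate.
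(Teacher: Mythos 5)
Your proof is correct. The key computation agrees with the paper's: both evaluate $\lambda$ on PBW monomials of the form $E^{r-1}F^{r-1}K^a$ through the projector $v_{-1}$, use $q^r=1$, and invoke the normalization $\xi=r([r-1]!)^2$.

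The difference is one of economy. The paper verifies the identity directly on the entire basis $\{A_j^{r-1,n}\mid 0\leqs j,n\leqs r-1\}$ of $\bar{X}_{r-1}=\bar{Q}_{r-1}$, showing $\lambda(A_j^{r-1,n}K^{-1}e_{r-1})=\delta_{j,n}=\tau_{r-1}(A_j^{r-1,n})$, which pins down the equality as linear forms on the block. You instead invoke the fact (stated just before the lemma) that $\{\tau_{r-1}\}$ is a basis of the one-dimensional space $\SLF(\bar{Q}_{r-1})$, so that $\lambda(\_\,K^{-1}e_{r-1})=c\,\tau_{r-1}$ for some scalar $c$ and it suffices to evaluate at a single basis vector $A_0^{r-1,0}$ where $\tau_{r-1}$ takes the value $1$. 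Your approach is slightly shorter because it avoids verifying the vanishing for $j\neq n$ (which in the paper's argument amounts to the observation that $\lambda$ kills $F^jE^{r-1}F^{r-n-1}K^a$ unless $j+r-n-1=r-1$). Either version is fine; the abstract reduction to $\dim\SLF(\bar{Q}_{r-1})=1$ is a cleaner way to see why a single scalar evaluation suffices, while the paper's exhaustive check is self-contained in the sense that it does not lean on knowing the dimension in advance. Worth noting: your argument implicitly uses that both sides are symmetric linear functions on the block algebra $\bar{Q}_{r-1}$ — which the paper also asserts just before the lemma — so there is no gap, only a convention that the surrounding discussion is available.
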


\begin{proof}
 On one hand, we have
 \begin{align*}
  \lambda(a_j^{r-1,n}K^{-1} e_{r-1}) 
  &= \lambda(F^jE^{r-1}F^{r-n-1}v_{-2n-1}K^{-1}) 
  = ([r-1]!)^2 \delta_{j,n}.
 \end{align*}
 This implies
 \begin{align*}
  \lambda(A_j^{r-1,n}K^{-1} e_{r-1}) &= \delta_{j,n}.
 \end{align*}
 On the other hand,
 \[ 
  \tau_{r-1}(A_j^{r-1,n}) = \delta_{j,n}. \qedhere
 \]
\end{proof}

Next, if, for all integers $r \leqs m \leqs 2r-2$ and $0 \leqs n \leqs 2r-m-2$, we set
\begin{align*}
 A_j^{m,n} &:= \frac{[m+1]^2}{([r-1]!)^2} a_j^{m,n} \in \bar{P}_m, \\*
 X_k^{m,n} &:= \frac{[m+1]^2}{([r-1]!)^2} x_k^{m,n} \in \bar{P}_m, \\*
 Y_k^{m,n} &:= \frac{[m+1]^2}{([r-1]!)^2} y_k^{m,n} \in \bar{P}_m, \\*
 B_j^{m,n} &:= \frac{[m+1]^2}{([r-1]!)^2} b_j^{m,n} + \frac{\{ m+1 \}'}{([r-1]!)^2} a_j^{m,n} \in \bar{P}_m,
\end{align*}
then
\[
 \left\{ A_j^{m,n}, X_k^{m,n}, Y_k^{m,n}, B_j^{m,n} \in \bar{P}_m \Bigm| 
 0 \leqs j,n \leqs 2r-m-2, 0 \leqs k \leqs m-r \right\}
\]
is a basis of $\bar{P}_m$, and we can denote by
\[
 \left\{ \psi^j_{m,n}, \xi^k_{m,n}, \zeta^k_{m,n}, \varphi^j_{m,n} \in \bar{P}^*_m \Bigm| 0 \leqs j,n \leqs 2r-m-2, 0 \leqs k \leqs m-r \right\}
\]
the dual basis. Then, in the standard bases of $P_m$ and of $P_{3r-m-2}$, the action of $\bar{P}_m$ is represented by the matrices 
\[
 \left( 
 \begin{matrix}
  \varphi_m & 0 & 0 & \psi_m \\
  0 & 0 & 0 & \xi_m \\
  0 & 0 & 0 & \zeta_m \\
  0 & 0 & 0 & \varphi_m
 \end{matrix} \right),
 \left( 
 \begin{matrix}
  0 & \zeta_m & \xi_m & 0 \\
  0 & \varphi_m & 0 & 0 \\
  0 & 0 & \varphi_m & 0 \\
  0 & 0 & 0 & 0
 \end{matrix} \right) \in M_{2r \times 2r}(\bar{P}^*_m),
\]
where 
\[
 \psi_m, \varphi_m \in M_{(2r-m-1) \times (2r-m-1)}(\bar{P}^*_m)
\]
denote the ma\-tri\-ces whose $(j,n)$th entries are given by $\psi^j_{m,n}$, $\varphi^j_{m,n}$, and where 
\[
 \xi_m, \zeta_m \in M_{(m-r+1) \times (2r-m-1)}(\bar{P}^*_m)
\]
denote the ma\-tri\-ces whose $(k,n)$th entries are given by $\xi^k_{m,n}$, $\zeta^k_{m,n}$ respectively. Therefore, if we set
\[
 \tau_m := \tr(\varphi_m) = \sum_{n=0}^{2r-m-2} \varphi^n_{m,n}, \qquad
 \tau'_m := \tr(\psi_m) = \sum_{n=0}^{2r-m-2} \psi^n_{m,n},
\]
then $\{ \tau_m, \tau_{3r-m-2}, \tau'_m + \tau'_{3r-m-2} \}$ is a basis of $\SLF(\bar{Q}_m)$. We call $\tau_m$ the \textit{$m$-trace} and $\tau'_m$ the \textit{pseudo $m$-trace}. Remark that neither $\tau'_m$ nor $\tau'_{3r-m-2}$ is a symmetric linear function, only their sum $\tau'_m + \tau'_{3r-m-2}$ is.

\begin{lemma}\label{L:dec_int_m}
 For every integer $r \leqs m \leqs 2r-2$ we have
 \[
  \lambda(\_ K^{-1} e_m) =  \{ m+1 \}' \left( \tau_m + \tau_{3r-m-2} \right) + [m+1]^2 \left( \tau'_m + \tau'_{3r-m-2} \right).
 \]
\end{lemma}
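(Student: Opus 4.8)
The plan is to exploit the fact, recorded just above, that $\SLF(\bar Q_m)$ has basis $\{\tau_m,\tau_{3r-m-2},\tau'_m+\tau'_{3r-m-2}\}$, so that $\lambda(\_\,K^{-1}e_m)=\alpha\tau_m+\beta\tau_{3r-m-2}+\gamma(\tau'_m+\tau'_{3r-m-2})$ for unique scalars, and to pin down $\alpha,\beta,\gamma$ by evaluating on the dual basis vectors $A_j^{m,n},B_j^{m,n},A_j^{3r-m-2,n},B_j^{3r-m-2,n}$ of $\bar Q_m=\bar P_m\oplus\bar P_{3r-m-2}$. Since $\psi_m$ is the block of the action dual to the $A$-vectors and $\varphi_m$ the block dual to the $B$-vectors, one has $\tau'_m(A_j^{m,n})=\tau_m(B_j^{m,n})=\delta_{j,n}$, while $\tau_m$ and $\tau'_m$ vanish on the remaining capital-letter vectors of $\bar P_m$ and on all of $\bar P_{3r-m-2}$, and symmetrically for $\tau_{3r-m-2},\tau'_{3r-m-2}$; hence evaluating on these vectors reads off the coefficients directly.

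The first thing I would compute is the needed values of $\lambda$. As $e_m$ is central and acts as the identity on $\bar Q_m$, for every $z\in\bar Q_m$ we have $\lambda(zK^{-1}e_m)=\lambda(zK^{-1})$. Reordering $F^jE^{r-1}F^{r-n-1}v_\ell K^{-1}$ into Poincaré--Birkhoff--Witt form exactly as in the proof of Lemma~\ref{L:dec_int_r-1}, and using that $\lambda$ is supported on the monomial $E^{r-1}F^{r-1}K^{r-1}$ together with $K^r=1$, only the top $E^{r-1}$-term of the reordering survives, it carries $K$-coefficient $1$, and the outcome is independent of the index $\ell$ of the projector $v_\ell$; this gives
\[
 \lambda(a_j^{m,n}K^{-1}e_m)=([r-1]!)^2\,\delta_{j,n},
\]
and the same formula with $m$ replaced by $3r-m-2$. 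For the remaining lowercase basis vectors the argument is a degree count: in $b_j^{m,n}$ and $y_k^{m,n}$ (and their $3r-m-2$ analogues) every PBW monomial obtained after reordering has $E$-power at most $r-2$, because of the exponents $r-h-2$ and the fact that reordering never raises $E$-powers; and in $x_k^{m,n}$ the monomials $E^{m-k-h-1}F^{r-n-h-2}v_{-m-2n-2}K^{-1}$ are already in PBW form, with $F$-power $r-n-h-2\leqs r-2$ (and vanish outright once $m-k-h-1\geqs r$). In all cases $\lambda$ annihilates them. Recalling the definitions of the capital-letter basis vectors, this yields
\[
 \lambda(A_j^{m,n}K^{-1}e_m)=[m+1]^2\,\delta_{j,n},\qquad \lambda(B_j^{m,n}K^{-1}e_m)=\{m+1\}'\,\delta_{j,n},
\]
$\lambda$ vanishes on $X_k^{m,n}$ and $Y_k^{m,n}$, and likewise with $m$ replaced by $3r-m-2$.

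Comparing with the pairing of the basis of $\SLF(\bar Q_m)$ with these vectors, evaluation on $B_j^{m,n}$ gives $\alpha=\{m+1\}'$, on $B_j^{3r-m-2,n}$ gives $\beta=\{3r-m-1\}'$, and on $A_j^{m,n}$ gives $\gamma=[m+1]^2$; evaluation on $A_j^{3r-m-2,n}$ gives $\gamma=[3r-m-1]^2$ and on the $X,Y$ vectors gives $0$, providing an overdetermined consistency check. Finally, $q^r=1$ gives $\{3r-m-1\}=-\{m+1\}$, hence $\{3r-m-1\}'=\{m+1\}'$ and $[3r-m-1]^2=[m+1]^2$, so $\beta=\alpha=\{m+1\}'$ and the two coefficients of $\tau'_m$ and $\tau'_{3r-m-2}$ coincide with $[m+1]^2$, which is exactly the claimed formula. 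I expect the main obstacle to be the bookkeeping behind $\lambda(a_j^{m,n}K^{-1}e_m)=([r-1]!)^2\delta_{j,n}$ — tracking the $q$-powers produced by the $[E,F]$ relation and the factor $1/r$ from $v_\ell$ so that they cancel against $\xi=r([r-1]!)^2$ — but this is essentially the same computation already performed for Lemma~\ref{L:dec_int_r-1}, and the conceptual steps (the basis of $\SLF(\bar Q_m)$, the degree count killing the $b,x,y$ vectors, and the identities forced by $q^r=1$) are routine.
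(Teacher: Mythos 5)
Your proposal is correct and follows essentially the same route as the paper: compute $\lambda(\_\,K^{-1}e_m)$ on the lowercase PBW basis of $\bar P_m$, transfer to the capital-letter basis, and compare against the dual basis $\{\tau_m,\tau'_m\}$ of $\SLF(\bar P_m)$. The one place where you add value is at the end: the paper only writes out the computation on the $\bar P_m$ block of $\bar Q_m=\bar P_m\oplus\bar P_{3r-m-2}$ and leaves the reader to observe that the symmetric computation on $\bar P_{3r-m-2}$ produces coefficients $\{3r-m-1\}'$ and $[3r-m-1]^2$ which, by $q^r=1$, equal $\{m+1\}'$ and $[m+1]^2$; you make this consistency check explicit and explain why the coefficient of $\tau'_m+\tau'_{3r-m-2}$ is well-defined. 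Your degree-count justification for the vanishing of $\lambda$ on the $b,x,y$ vectors (PBW reordering never raises the $E$-power, so the $E$-exponent stays $\leqs r-2$, respectively the $F$-exponent stays $\leqs r-2$ for the $x$ vectors) is exactly the reason the paper's displayed integrals evaluate to zero, though the paper leaves it unsaid. In short: same argument, slightly more fully spelled out.
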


\begin{proof}
 On one hand, we have
 \begin{align*}
  &\lambda(a_j^{m,n}K^{-1} e_m) \\*
  &\hspace*{\parindent} = \lambda(F^jE^{r-1}F^{r-n-1}v_{-m-2n-2}K^{-1}) \\*
  &\hspace*{\parindent} = ([r-1]!)^2 \delta_{j,n}, \\
  &\lambda(x_k^{m,n}K^{-1} e_m) \\*
  &\hspace*{\parindent} = \sum_{h=0}^{m-r} \frac{[k]![h]!}{[m-k-r]![m-h-r]!} \lambda(E^{m-k-h-1} F^{r-n-h-2}v_{-m-2n-2}K^{-1}) \\*
  &\hspace*{\parindent} = 0, \\
  &\lambda(y_k^{m,n}K^{-1} e_m) \\*
  &\hspace*{\parindent} = \sum_{h=0}^{m-r} \frac{[m-r]![h]!}{[m-h-r]!} \lambda(F^{2r-m+k-1}E^{r-h-2} F^{r-n-h-2}v_{-m-2n-2}K^{-1}) \\*
  &\hspace*{\parindent} = 0, \\
  &\lambda(b_j^{m,n}K^{-1} e_m) \\*
  &\hspace*{\parindent} = \frac{[m+1]^2}{([r-1]!)^2} \sum_{h=0}^{m-r} \frac{[m-r]![h]!}{[m-h-r]!} \lambda(F^jE^{r-h-2} F^{r-n-h-2}v_{-m-2n-2}K^{-1}) \\*
  &\hspace*{\parindent} = 0.
 \end{align*}
 This implies
 \begin{align*}
  \lambda(A_j^{m,n}K^{-1} e_m) &= [m+1]^2 \delta_{j,n}, \\*
  \lambda(X_k^{m,n}K^{-1} e_m) &= 0, \\*
  \lambda(Y_k^{m,n}K^{-1} e_m) &= 0, \\*
  \lambda(B_j^{m,n}K^{-1} e_m) &= \{ m+1 \}' \delta_{j,n}.
 \end{align*}
 On the other hand,
 \begin{align*}
  \tau_m(A_j^{m,n}) &= 0, &
  \tau'_m(A_j^{m,n}) &= \delta_{j,n}, \\*
  \tau_m(X_k^{m,n}) &= 0, &
  \tau'_m(X_k^{m,n}) &= 0, \\*
  \tau_m(Y_k^{m,n}) &= 0, &
  \tau'_m(Y_k^{m,n}) &= 0, \\*
  \tau_m(B_j^{m,n}) &= \delta_{j,n}, &
  \tau'_m(B_j^{m,n}) &= 0. \qedhere
 \end{align*}
\end{proof}

The previous discussion implies a basis of $\QC(\bar{U})$ is given by
\[
 \left\{ \tau_m(\_ K) \mid r-1 \leqs m \leqs 2r-2 \right\} \cup
 \left\{ \tau'_m(\_ K) + \tau'_{3r-m-2}(\_ K) \Bigm| r \leqs m \leqs 3 \frac{r-1}{2} \right\}.
\]
For every integer $r-1 \leqs m \leqs 2r-2$ we call $\tau_m(\_ K)$ the \textit{quantum $m$-trace}, and for every integer $r \leqs m \leqs 2r-2$ we call $\tau'_m(\_ K)$ the \textit{pseudo quantum $m$-trace}. We are now ready to decompose the left integral.

\begin{proposition}\label{P:decomposition_integral}
 The left integral $\lambda \in \bar{U}^*$ can be written as
 \begin{equation}\label{E:decomposition_integral}
  \lambda = \tau_{r-1}(\_ K) + \sum_{m=r}^{2r-2} \{ m+1 \}' \tau_m(\_ K) + [m+1]^2 \tau'_m(\_ K).
 \end{equation}
\end{proposition}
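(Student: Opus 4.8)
The plan is to use that $\lambda$ is a quantum character, together with the block decomposition of $\bar{U}$ recorded above and Lemmas~\ref{L:dec_int_r-1}--\ref{L:dec_int_m}, following the strategy of \cite[Section~5]{A08}. First I would note that, by equation~\eqref{E:quantum_char}, $\lambda \in \QC(\bar{U})$, so its image $\lambda(\_ K^{-1})$ under the pivotal isomorphism $\QC(\bar{U}) \xrightarrow{\sim} \SLF(\bar{U})$, $\varphi \mapsto \varphi(\_ g^{-1})$ with $g = K$, is a symmetric linear function. Since $\bar{U} = \bigoplus_{m=r-1}^{3\frac{r-1}{2}} \bar{Q}_m$ as an algebra, with $e_m \in Z(\bar{U})$ the central idempotent projecting onto $\bar{Q}_m$ and $\sum_{m=r-1}^{3\frac{r-1}{2}} e_m = 1$, we get
\[
 \lambda(\_ K^{-1}) = \sum_{m=r-1}^{3\frac{r-1}{2}} \lambda(\_ K^{-1} e_m),
\]
where the $m$th summand lies in $\SLF(\bar{Q}_m)$, as it kills every block $\bar{Q}_{m'}$ with $m' \neq m$ because $e_{m'} e_m = 0$. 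By Lemma~\ref{L:dec_int_r-1} the term $m = r-1$ equals $\tau_{r-1}$, and by Lemma~\ref{L:dec_int_m} the term indexed by $r \leqs m \leqs 3\frac{r-1}{2}$ equals $\{m+1\}'(\tau_m + \tau_{3r-m-2}) + [m+1]^2(\tau'_m + \tau'_{3r-m-2})$. Applying the inverse isomorphism $\SLF(\bar{U}) \xrightarrow{\sim} \QC(\bar{U})$, $\psi \mapsto \psi(\_ K)$, then expresses $\lambda$ as the same sum with a factor of $K$ inserted in each trace, namely $\lambda = \tau_{r-1}(\_ K) + \sum_{m=r}^{3\frac{r-1}{2}} ( \{m+1\}'(\tau_m(\_ K) + \tau_{3r-m-2}(\_ K)) + [m+1]^2(\tau'_m(\_ K) + \tau'_{3r-m-2}(\_ K)) )$.

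It then remains to re-index this symmetrized sum into the form \eqref{E:decomposition_integral}. The map $m \mapsto 3r-m-2$ is a fixed-point-free involution of $\{r, \ldots, 2r-2\}$ (fixed-point-free because $r$ is odd) admitting $\{r, \ldots, 3\frac{r-1}{2}\}$ as a set of representatives, so the sum over representatives of $\{m+1\}'(\tau_m(\_ K) + \tau_{3r-m-2}(\_ K))$, and likewise of $[m+1]^2(\tau'_m(\_ K) + \tau'_{3r-m-2}(\_ K))$, collapses to a single sum over $m = r, \ldots, 2r-2$ once one checks that the two coefficients are invariant under the involution: since $q^{3r} = 1$ one has $\{3r-m-1\}' = q^{m+1} + q^{-(m+1)} = \{m+1\}'$ and $\{3r-m-1\} = -(q^{m+1} - q^{-(m+1)}) = -\{m+1\}$, whence $[3r-m-1]^2 = [m+1]^2$. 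Substituting gives exactly \eqref{E:decomposition_integral}.

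Almost all the computational content sits in Lemmas~\ref{L:dec_int_r-1} and \ref{L:dec_int_m}, which rely on the explicit bases of the projective summands $\bar{X}_{r-1}$ and $\bar{P}_m$ of the regular representation and on the defining formula for $\lambda$. The only points that require care here are tracking the pivotal isomorphism and its inverse so that the factors of $K$ land in the right places, and recognizing that the apparently asymmetric shape of \eqref{E:decomposition_integral} --- a single sum over $m = r, \ldots, 2r-2$ rather than a symmetrized sum over representatives --- is reconciled with the above expansion precisely by the invariance of $\{m+1\}'$ and $[m+1]^2$ under $m \mapsto 3r-m-2$.
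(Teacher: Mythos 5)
Your proposal is correct and follows exactly the paper's route: the paper's own proof is the one-line decomposition $\lambda = \sum_{m=r-1}^{3\frac{r-1}{2}} \lambda(\_\, e_m)$ combined with Lemmas~\ref{L:dec_int_r-1} and \ref{L:dec_int_m}, and your argument simply makes explicit the pivotal isomorphism bookkeeping and the re-indexing via the invariance of $\{m+1\}'$ and $[m+1]^2$ under $m \mapsto 3r-m-2$, which the paper leaves implicit. No gaps.
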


\begin{proof}
 The formula follows from
 \[
  \lambda = \sum_{m=r-1}^{3\frac{r-1}{2}} \lambda(\_ e_m). \qedhere
 \]
\end{proof}

\subsection{Adjoint representation}\label{SS:decomposition_adjoint}

Let us highlight an important property of the adjoint representation $\ad$ of $\bar{U}$ which follows directly from the explicit description provided in \cite{Os95}. The adjoint action of $\bar{U}$ onto itself is determined by
\[
 \ad_E(x) = [E,x]K^{-1}, \qquad
 \ad_F(x) = K^{-1}[KF,x], \qquad
 \ad_K(x) = KxK^{-1}
\]
for every $x \in \ad$. We can define a $\Z$-grading on $\ad$ by setting 
\[
 \deg(E) = 1, \qquad
 \deg(F) = -1, \qquad
 \deg(K) = 0.
\]
Remark that for every integer $0 \leqs m \leqs r-1$ the generalized eigenspace $\bar{Q}_m$ defines a subrepresentation $\ad_m$ of $\ad$, because $C$ is central. In other words, we have
\[
 \ad = \bigoplus_{m=r-1}^{3\frac{r-1}{2}} \ad_m.
\]
If $\ad_m^0$ denotes the space of degree 0 vectors of $\ad_m$, a basis of $\ad_{r-1}^0$ is given by
\begin{align*}
 \{ K^a e_{r-1} \mid 0 \leqs a \leqs r-1 \},
\end{align*}
and similarly, for every integer $r \leqs m \leqs 2r-2$, a basis of $\ad_m^0$ is given by
\begin{align*}
 \{ K^a e_m, K^a w_m \mid 0 \leqs a \leqs r-1 \}.
\end{align*}

\begin{lemma}\label{L:ad_m}
 Every $\bar{U}$-module morphism from $\ad_m$ to $P_{2r-2}$ is uniquely determined by its restriction to $\ad_m^0$.
\end{lemma}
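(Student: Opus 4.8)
My plan is to derive the statement from a single structural fact about the target module: the socle of $P_{2r-2}$ is simple and concentrated in degree~$0$. First I would read off the structure of $P_{2r-2}$ from the action recalled in Section~\ref{S:simple_indec_proj}. Setting $m = 2r-2$ there gives $2r-m-2 = 0$ and $m-r = r-2$, so $P_{2r-2}$ has basis $\{ a_0^{2r-2}, b_0^{2r-2} \} \cup \{ x_k^{2r-2}, y_k^{2r-2} \mid 0 \leqs k \leqs r-2 \}$. A direct inspection of the formulas shows $E \cdot a_0^{2r-2} = F \cdot a_0^{2r-2} = 0$ and $K \cdot a_0^{2r-2} = q^{-2r} a_0^{2r-2} = a_0^{2r-2}$, so $\C a_0^{2r-2}$ is a copy of the trivial module inside $P_{2r-2}$. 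A short further check on the basis (or an appeal to the well-known structure of $\mods{\bar{U}}$, $P_{2r-2}$ being the projective cover of the trivial module $\C$) shows that this is exactly the socle of $P_{2r-2}$; in particular every non-zero $\bar{U}$-submodule of $P_{2r-2}$ contains $a_0^{2r-2}$.

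Next I would record the interaction with the $\Z$-grading of $\ad$. For a PBW monomial one has $\ad_K(E^a F^b K^c) = q^{2(a-b)} E^a F^b K^c$, and since $r$ is odd the condition $q^{2(a-b)} = 1$ forces $a = b$; hence the eigenspace of $\ad_K$ of eigenvalue $1$ inside $\ad_m$ is precisely the degree-$0$ subspace $\ad_m^0$. On the other side, $a_0^{2r-2}$ lies in the eigenspace of $K$ of eigenvalue $1$ inside $P_{2r-2}$. Any $\bar{U}$-module morphism $\phi \colon \ad_m \to P_{2r-2}$ intertwines the two actions of $K$, hence commutes with the projections onto these eigenspaces.

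Now assume $\phi \colon \ad_m \to P_{2r-2}$ vanishes on $\ad_m^0$, and suppose for contradiction that $\phi \neq 0$. Then $\im(\phi)$ is a non-zero submodule of $P_{2r-2}$, so it contains the socle, and therefore $a_0^{2r-2} = \phi(v)$ for some $v \in \ad_m$. Projecting onto the eigenvalue-$1$ eigenspace of $\ad_K$ and using the previous paragraph, the degree-$0$ component $v_0 \in \ad_m^0$ of $v$ already satisfies $\phi(v_0) = a_0^{2r-2} \neq 0$, which contradicts $\phi|_{\ad_m^0} = 0$. Hence $\phi = 0$, and this is exactly the assertion that restriction to $\ad_m^0$ is injective on $\Hom_{\bar{U}}(\ad_m, P_{2r-2})$, i.e.\ that such a morphism is determined by its restriction to $\ad_m^0$.

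The only genuine computation is the identification of the socle of $P_{2r-2}$ in the first step, and this is the point where one must be a little careful; everything else is formal. I would also remark that the argument uses nothing special about $\ad_m$: it shows that \emph{any} $\bar{U}$-module morphism with target $P_{2r-2}$ is determined by its restriction to the degree-$0$ part of its source, which is precisely what is needed in Section~\ref{SS:decomposition_adjoint} to reduce the study of quantum characters to the degree-$0$ subspace of $\ad$.
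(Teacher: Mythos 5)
Your argument is correct, and it takes a genuinely different route from the paper's. The paper proves the lemma by invoking Ostrik's explicit decomposition of $\ad_m$ into indecomposable summands and checking, summand by summand, that each one is either generated by a vector of $\ad_m^0$ or (in the case of certain simple summands) admits no non-trivial morphism to $P_{2r-2}$ because the latter is the projective cover of $X_0 = \C$. You instead exploit only the structure of the \emph{target}: $P_{2r-2}$ has simple socle $\C a_0^{2r-2} \cong X_0$, concentrated in the $K$-eigenvalue-$1$ eigenspace, so any non-zero morphism hits the socle, and $K$-equivariance then forces a degree-$0$ vector of the source to hit it as well. The two small verifications you rely on are both sound: the identification of $\ad_m^0$ with the eigenvalue-$1$ eigenspace of $\ad_K$ does use that $r$ is odd (so $q^{2(a-b)}=1$ forces $a=b$ in the PBW range), which you note, and the socle computation for $P_{2r-2}$ is a short check on the explicit basis (every non-zero weight component of a simple submodule is pushed onto $a_0^{2r-2}$ by a suitable power of $E$ or $F$). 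Your approach buys self-containedness — it avoids Ostrik's decomposition entirely — and yields the stronger statement that \emph{any} morphism into $P_{2r-2}$ is determined by its restriction to the $K$-fixed part of its source; the paper's approach, at the cost of quoting a nontrivial external structure theorem, exhibits the full decomposition of $\ad_m$, which gives a more concrete picture of where the degree-$0$ generators sit. Either proof is acceptable here.
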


\begin{proof}
 As mentioned earlier, the proof follows from the explicit description of $\ad$ given in \cite{Os95}. Indeed, if $m = r-1$, the $\bar{U}$-module $\ad_{r-1}$ is projective, and it decomposes as
 \[
  \ubar{X}_{r-1,r-1} \oplus \bigoplus_{n=1}^{\frac{r-1}{2}} \ubar{P}_{r-1,r+2n-1},
 \]
 where $\ubar{X}_{r-1,r-1}$ is isomorphic to $X_{r-1}$, and $\ubar{P}_{r-1,r+2n-1}$ is isomorphic to $P_{r+2n-1}$. Each of these submodules is generated by some vector in $\ad_{r-1}^0$. On the other hand, if $r \leqs m \leqs 3\frac{r-1}{2}$, the $\bar{U}$-module $\ad_m$ is not projective, and it decomposes as the direct sum of a projective submodule 
 \[
  \ubar{X}_{m,r-1} \oplus \ubar{X}'_{m,r-1} \oplus \left( \bigoplus_{n=1}^{3\frac{r-1}{2}-m} \ubar{P}_{m,r+2n-1} \oplus \ubar{P}'_{m,r+2n-1} \right) \oplus \bigoplus_{n=r}^m \ubar{P}_{m,2(r-m+n-1)}
 \]
 and a non-projective submodule 
 \[
  \bigoplus_{n=r}^m \ubar{X}^{\down}_{m,2(m-n)} \oplus \ubar{X}^+_{m,r-2(m-n+1)} \oplus \ubar{X}^-_{m,r-2(m-n+1)} \oplus \ubar{X}^{\up}_{m,2(m-n)},
 \]
 where $\ubar{X}_{m,r-1}$ and $\ubar{X}'_{m,r-1}$ are isomorphic to $X_{r-1}$, $\ubar{P}_{m,r+2n-1}$ and $\ubar{P}'_{m,r+2n-1}$ are isomorphic to $P_{r+2n-1}$, $\ubar{P}_{m,2(r-m+n-1)}$ is isomorphic to $P_{2(r-m+n-1)}$, $\ubar{X}_{m,2(m-n)}^{\down}$ and $\ubar{X}_{m,2(m-n)}^{\up}$ are isomorphic to $X_{2(m-n)}$, and $\ubar{X}_{m,r-2(m-n+1)}^+$ and $\ubar{X}_{m,r-2(m-n+1)}^-$ are isomorphic to $X_{r-2(m-n+1)}$. Each of these submodules is generated by some vector in $\ad_m^0$, with the exception of $\ubar{X}_{m,r-2(m-n+1)}^+$ and $\ubar{X}_{m,r-2(m-n+1)}^-$ for every integer $r \leqs n \leqs m$. Remark however that these $\bar{U}$-modules admit no non-trivial morphism to $P_{2r-2}$, which is the projective cover of $X_0 = \C$.
\end{proof}

\section{Beads}\label{S:beads}

In this section we set up the technology for the comparison between the non-semisimple invariant $Z_r$ and the renormalized Hennings invariant associated with $\bar{U}$. In order to establish their equivalence, we first need to relate the Temperley--Lieb category $\TL$ to the category $\mods{\bar{U}}$ of finite-dimensional left $\bar{U}$-modules.

\subsection{Ribbon structures}\label{S:ribbon_structures}

Let us consider the monoidal linear functor 
\[
 F_\TL : \TL \to \mods{\bar{U}}
\]
which sends the monoidal generator $1 \in \TL$ to the fundamental representation $X \in \mods{\bar{U}}$ defined in Section~\ref{S:simple_indec_proj}, the evaluation $\ev_1 \in \TL(2,0)$ to the morphism $e \in \Hom_{\bar{U}}(X \otimes X,\C)$ defined by
\begin{equation}\label{E:ev_TL}
 e(a_0^1 \otimes a_0^1) := 0, \quad 
 e(a_0^1 \otimes a_1^1) := -1, \quad
 e(a_1^1 \otimes a_0^1) := q^{-1}, \quad 
 e(a_1^1 \otimes a_1^1) := 0,
\end{equation}
and the coevaluation $\coev_1 \in \TL(0,2)$ to the morphism $c \in \Hom_{\bar{U}}(\C,X \otimes X)$ defined by
\begin{equation}\label{E:coev_TL}
 c(1) := q a_0^1 \otimes a_1^1 - a_1^1 \otimes a_0^1.
\end{equation}

As a consequence of the skein relation~\eqref{E:S1}, the functor $F_\TL$ is braided. This means it sends the braiding $c_{1,1} \in \TL(2,2)$ to the morphism
\begin{equation}\label{E:c_TL}
 q^{\frac{r+1}{2}} \id_{X \otimes X} + q^{\frac{r-1}{2}} c \circ e \in \End_{\bar{U}}(X \otimes X),
\end{equation}
which coincides with the braiding $c_{X,X} \in \End_{\bar{U}}(X \otimes X)$ determined by the R-matrix $R \in \bar{U} \otimes \bar{U}$.

However, $F_\TL$ does not behave well with respect to the other structure morphisms of the ribbon categories $\TL$ and $\mods{\bar{U}}$. Indeed, for every $u \in \TL(m)$, it sends the partial trace $\ptr_1(u) \in \TL(m-1)$ to the morphism
\begin{equation}\label{E:ptr_TL}
 (\id_{X^{\otimes m-1}} \otimes e) \circ (F_{\TL}(u) \otimes \id_X) \circ (\id_{X^{\otimes m-1}} \otimes c) \in \End_{\bar{U}}(X^{\otimes m-1}),
\end{equation}
which is obtained from the partial trace $\ptr_X(F_\TL(u)) \in \End_{\bar{U}}(X^{\otimes m-1})$ determined by the pivotal element $K \in \bar{U}$ by a change of sign. In particular, $F_\TL$ sends the twist $\theta_1 \in \TL(1,1)$ to the morphism
\begin{equation}\label{E:theta_TL}
 - q^{\frac{r+3}{2}} \id_X \in \End_{\bar{U}}(X),
\end{equation}
which coincides with the twist $\theta_X \in \mods{\bar{U}}$ determined by the inverse ribbon element $v_- \in \bar{U}$ only up to the sign.

\begin{remark}\label{R:sign_discrepancy}
 More generally, for all $m,m' \in \TL$ we have
 \begin{align}\label{E:c_TL_general}
  F_\TL(c_{m,m'}) &= c_{F_\TL(m),F_\TL(m')},
 \end{align}
 but for every $u \in \TL(m)$ and $0 \leqs k \leqs m$ we have 
 \begin{align}\label{E:ptr_TL_general}
  F_\TL(\ptr_k(u)) &= (-1)^k \ptr_{F_\TL(k)} (F_\TL(u)).
 \end{align}
 In particular, this implies
 \begin{align}\label{E:theta_TL_general}
  F_\TL(\theta_m) &= (-1)^m \theta_{F_\TL(m)}.
 \end{align}
 This sign discrepancy in the comparison between the ribbon structure of $\TL$ and that of $\mods{\bar{U}}$ is the reason why we need to use the ribbon Kauffman bracket of equation~\eqref{E:ribbon_bracket_top_tangle}, instead of the standard one, in the definition of the admissible bichrome link invariant $F'_\Omega$ of Proposition~\ref{P:ad_bic_link_inv}.
\end{remark}

Although it does not preserve ribbon structures, the functor $F_\TL$ is faithful. Indeed, this can be shown by considering Lusztig's divided power quantum group of $\fsl_2$, which is denoted $U$, and which contains $\bar{U}$ as a Hopf subalgebra. Then, the functor $F_\TL$ can be written as the composition of two faithful functors: the first one is the equivalence from $\TL$ to the full monoidal subcategory of $\mods{U}$ generated by the fundamental representation of $U$, while the second one is the restriction functor from $\mods{U}$ to $\mods{\bar{U}}$. By abuse of notation, we still denote by $F_\TL : \TL \to \Vect_\C$ the composition of $F_\TL$ with the forgetful functor from $\mods{\bar{U}}$ to $\Vect_\C$.

\subsection{Bead category}\label{SS:bead_category}

Let us now revisit the HKR algorithm, which we will adapt to our purposes. This requires a few preliminary definitions. First of all, let us denote by $\calB$ the ribbon linear category obtained from $\TL$ by allowing strands of morphisms to carry beads labeled by elements of $\bar{U}$. Remark that there exists a unique monoidal linear functor $\calF : \calB \to \Vect_\C$ extending $F_\TL$ and sending every $x$-labeled bead to the linear endomorphism of $X$ determined by the action of $x$. The \textit{bead category} $\beadC$ is the ribbon linear category defined as the quotient of $\calB$ with respect to the kernel of $\calF$, and we denote by 
\[
 F_{\bar{U}} : \beadC \to \Vect_\C
\]
the faithful monoidal linear functor induced by $\calF$ on the quotient. In the bead category $\beadC$ we have
\begin{equation}\label{E:bead_notation_1}
 \pic{bead_notation_1}
\end{equation}
When $m$ parallel strands are represented graphically by a single strand with label $m$, we adopt the convention
\begin{equation}\label{E:bead_notation_2}
 \pic{bead_notation_2}
\end{equation}
Remark that
\begin{equation}\label{E:bead_notation_3}
 \pic{bead_notation_3}
\end{equation}
for every $x \in \bar{U}$ and every $u \in \TL(m,m')$. Furthermore, as a consequence of \cite[Lemma~4.1]{BDM19}, we also have
\begin{align}
 \pic{int_lemma_f_m_e_n} &= \delta_{m,n} \cdot \pic{int_lemma_f_m} &
 \pic{int_lemma_f_m_w_n_+} &= \pic{int_lemma_f_m_w_n_-} = 0 \label{E:bead_f_m}
\end{align}
for every integer $0 \leqs m \leqs r-1$, and
\begin{align}
 \pic{int_lemma_g_m_e_n} &= (\delta_{m,n} + \delta_{3r-m-2,n}) \cdot \pic{int_lemma_g_m} \label{E:bead_g_m_e} \\*
 \pic{int_lemma_g_m_w_n_+} &= - [m+1] \delta_{m,n} \cdot \pic{int_lemma_h_m}  \label{E:g_m_w_+} \\*
 \pic{int_lemma_g_m_w_n_-} &= - [m+1] \delta_{3r-m-2,n} \cdot \pic{int_lemma_h_m} \label{E:bead_g_m_w_-}
\end{align}
for every integer $r \leqs m \leqs 2r-2$.

Let now $p \in \TL(m)$ be an idempotent and $T \in \calT_k(p)$ be a bichrome $k$-top tangle. If $\lb_{n_1,\ldots,n_k}(T)$ denotes the $(n_1,\ldots,n_k)$-labeling of $T$ introduced in equation~\eqref{E:labeling_top_tangle} for integers $r-1 \leqs n_1,\ldots,n_k \leqs 2r-2$, then let us define a morphism 
\[
 B_{n_1,\ldots,n_k}(T) \in \beadC(p,n_1 \otimes n_1 \otimes \ldots \otimes n_k \otimes n_k \otimes p),
\]
called the \textit{bead presentation of $\lb_{n_1,\ldots,n_k}(T)$}, which will be obtained through the following
version of the HKR algorithm based on \textit{singular diagrams} \cite[Section~3.3]{K94}, also known as \textit{flat diagrams} \cite[Section~4.8]{V03}. A singular diagram of a framed tangle is obtained from a regular diagram by discarding framings and forgetting the difference between overcrossings and undercrossings. On the set of singular diagrams, we consider the equivalence relation generated by all singular versions of the usual local moves corresponding to ambient isotopies of tangles, except for the first Reidemeister move. In particular, two equivalent singular diagrams represent homotopic tangles, but not all homotopies are allowed. Then, let us explain how to define the bead presentation of $\lb_{n_1,\ldots,n_k}(T)$ (the reader is also invited to check the references above, or indeed any of those listed in Section~\ref{S:strategy_proof}, where more details about the HKR algorithm can be found). We start from a regular diagram of $\lb_{n_1,\ldots,n_k}(T)$, and we pass to its singular version while also inserting beads labeled by components of the R-matrix around crossings as shown:
\begin{align*}
 \pic{bead_functor_1} &\mapsto \pic{bead_functor_2} &
 \pic{bead_functor_3} &\mapsto \pic{bead_functor_4}
\end{align*}
Next, we need to collect all beads sitting on the same strand in one place, which has to be next to an upward oriented endpoint, for components which are not closed. As we slide beads past maxima, minima, and crossings, we change their labels according to the rule
\begin{align*}
 \pic{sing_notation_1}
\end{align*}
Next, we pass from our singular diagram to an equivalent one whose singular crossings all belong to singular versions of twist morphisms, and we replace them with beads labeled by pivotal elements according to the rule
\begin{align*}
 \pic{sing_notation_2}
\end{align*}
This is indeed possible, because we started from a bichrome top tangle featuring a unique blue incoming boundary point and a unique outgoing one. Finally, we collect all remaining beads, changing their labels along the way as before, and we multiply everything together according to the rule
\begin{align*}
 \pic{sing_notation_3}
\end{align*}
In the end, we are left with a planar tangle carrying at most a single bead on each of its components. Inserting idempotents of $\TL$ as shown in equation~\eqref{E:cabling_functor} gives
\begin{equation}\label{E:bead_presentation}
 B_{n_1,\ldots,n_k} \left( \pic{bead_presentation_0} \right) := \hspace*{-5pt} \pic{bead_presentation_2}
\end{equation}
for some $x_1(T), \ldots, x_k(T) \in \bar{U}$ and some $\tilde{T} \in \TL_{\bar{U}}(p)$. This defines the bead presentation of the $(n_1,\ldots,n_k)$-labeling $\lb_{n_1,\ldots,n_k}(T)$ of $T$.

\begin{proposition}\label{P:beads}
 The ribbon Kauffman bracket of equation~\eqref{E:ribbon_bracket_top_tangle} yields
 \begin{equation}\label{E:equiv_beads_Kauffman}
  F_{\bar{U}} ( B_{n_1,\ldots,n_k} (T) ) 
  = F_\TL ( \langle T \rangle_{n_1,\ldots,n_k}^\rb )
 \end{equation}
 for all $T \in \calT_k(p)$ and $r-1 \leqs n_1,\ldots,n_k \leqs 2r-2$.
\end{proposition}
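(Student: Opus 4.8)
The plan is to evaluate both sides of \eqref{E:equiv_beads_Kauffman} on a generic diagram of the $\TL$-labeled framed tangle $\lb_{n_1,\ldots,n_k}(T)$ and match them piece by piece, the only discrepancy being a global sign that I will recognize as $(-1)^{\rb(\lb_{n_1,\ldots,n_k}(T))}$. Concretely, I would present $\lb_{n_1,\ldots,n_k}(T)$ as a composition and tensor product of elementary pieces --- cups and caps of $\ribTL$, positive and negative crossings, full kinks, and strands labeled by idempotents of $\TL$ --- with all framing collected into full kinks; since $F_{\bar{U}}\circ B_{n_1,\ldots,n_k}$ and $F_\TL\circ\langle\_\rangle$ are both assembled from local rules, it suffices to compare them on each elementary piece, together with their compatibility with composition and tensor product.

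Next I would exploit the structure of $F_\TL$. Since $\langle\_\rangle\colon\ribTL\to\TL$ is a ribbon functor, $\langle\lb_{n_1,\ldots,n_k}(T)\rangle$ replaces each elementary piece by the matching structure morphism of $\TL$, cabled according to the strand labels as in \eqref{E:cabling_functor}; applying $F_\TL$, which is a faithful braided monoidal functor with $F_\TL(1)=X$, $F_\TL(\ev_1)=e$, $F_\TL(\coev_1)=c$, and $F_\TL(c_{1,1})$ the $R$-matrix braiding $c_{X,X}$ (equations \eqref{E:ev_TL}--\eqref{E:c_TL}, extended to arbitrary cabling by \eqref{E:cabling_functor} and \eqref{E:c_TL_general}), turns $\langle\lb_{n_1,\ldots,n_k}(T)\rangle$ into the graphical evaluation in $\mods{\bar{U}}$ of the same tangle --- idempotent-labeled strands colored through $F_\TL$, crossings given by the $R$-matrix braiding, cups and caps by $c$ and $e$. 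On the other hand, the HKR algorithm recalled above produces, via $F_{\bar{U}}$, exactly this evaluation but performed with the genuine ribbon conventions of $\mods{\bar{U}}$: the $R$-matrix beads realize the braiding, $c$ and $e$ the rigidity, and the residual singular kinks replaced by pivotal-element beads $g=K$, together with the adjacent $R$-matrix beads, the twist $\theta_X=v_-$ --- for this I would invoke the standard fact that the bead algorithm is well defined and computes the Reshetikhin--Turaev evaluation, proved in several of the references of Section~\ref{S:strategy_proof} (for instance \cite{KR94,K96,V03}). The two computations then differ only because $F_\TL$ is not a ribbon functor: by \eqref{E:ptr_TL_general}--\eqref{E:theta_TL_general}, each closing-off of an $\ell$-cabled strand contributes an extra factor $(-1)^{\ell}$, and each full kink on an $\ell$-cabled strand contributes an extra factor $(-1)^{\ell}$. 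Thus $F_\TL(\langle T\rangle_{n_1,\ldots,n_k})=(-1)^{\varepsilon}\,F_{\bar{U}}(B_{n_1,\ldots,n_k}(T))$ for an explicit sign $\varepsilon$.

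It remains to compute $\varepsilon$ modulo $2$, which I would do component by component: a closed blue component $B_i$ labeled by $p_i\in\TL(m_i)$ is closed off once and carries $\slk(B_i)$ kinks, so it contributes $(-1)^{m_i(\slk(B_i)+1)}$; the unique non-closed blue component $B$ labeled by $p\in\TL(m)$ is not closed off and carries the framing of its trace closure, contributing $(-1)^{m\,\slk(\tcl(B))}$; each former-red component $R_i$, now labeled by $n_i$, is not closed off and carries the framing of its plat closure, contributing $(-1)^{n_i\,\slk(\pcl(R_i))}$; beads and Jones--Wenzl idempotents on the strands contribute nothing. Multiplying, $\varepsilon\equiv m\,\slk(\tcl(B))+\sum_i m_i(\slk(B_i)+1)+\sum_i n_i\,\slk(\pcl(R_i))=\rb(\lb_{n_1,\ldots,n_k}(T))$ by \eqref{E:ribbon_number_top_tangle}, and combining this with \eqref{E:bracket_top_tangle}--\eqref{E:ribbon_bracket_top_tangle} yields $F_\TL(\langle T\rangle^\rb_{n_1,\ldots,n_k})=(-1)^{\rb}F_\TL(\langle T\rangle_{n_1,\ldots,n_k})=F_{\bar{U}}(B_{n_1,\ldots,n_k}(T))$, as claimed. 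I expect this last bookkeeping to be the delicate point: one must be careful that only the genuinely closed blue components acquire the ``$+1$'' --- it comes from the partial-trace sign \eqref{E:ptr_TL_general}, whereas the open blue component and the former-red components, not being closed off, do not --- and that the framings entering \eqref{E:ribbon_number_top_tangle} for these open components are those of their trace- and plat-closures. Matching this exactly with the combinatorial formula is precisely why the ribbon Kauffman bracket carries the sign $(-1)^{\rb}$, cf.\ Remark~\ref{R:sign_discrepancy} and \cite[Theorem~H.3]{O02}.
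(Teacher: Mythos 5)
Your argument is correct and follows essentially the same route as the paper, which in fact omits the proof entirely: it defers the local comparison to the Hennings--Kauffman--Radford literature and records only that the ribbon number counts the label-weighted occurrences of partial traces and twist morphisms in $\lb_{n_1,\ldots,n_k}(T)$, i.e.\ exactly the sign bookkeeping you carry out. Your component-by-component accounting of the exponent --- the ``$+1$'' arising only for genuinely closed blue components via \eqref{E:ptr_TL_general}, and the closure framings $\slk(\tcl(B))$, $\slk(\pcl(R_i))$ for the open ones --- reproduces equation~\eqref{E:ribbon_number_top_tangle} and is precisely the content the paper adds beyond Remark~\ref{R:sign_discrepancy}.
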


Remark that equation~\eqref{E:equiv_beads_Kauffman} uses the abusive notation $F_\TL : \TL \to \Vect_\C$ introduced at the end of Section~\ref{S:ribbon_structures}. In other words, it should be read as an equality between linear maps, where we are omitting the forgetful functor from $\mods{\bar{U}}$ to $\Vect_\C$. The proof of Proposition~\ref{P:beads} follows directly from the construction, which is due to Hennings, Kauffman, and Radford, and will not be given here. We only stress once again the fact that the use of the ribbon version of the Kauffman bracket of equation~\eqref{E:ribbon_bracket_top_tangle} is required by Remark~\ref{R:sign_discrepancy}. Indeed, the ribbon number of equation~\eqref{E:ribbon_number_top_tangle} measures precisely the sign difference between $F_{\bar{U}}(B_{n_1,\ldots,n_k}(T))$ and $F_\TL(\langle T \rangle_{n_1,\ldots,n_k})$, as it counts the total number of times (weighted by the label) partial traces and twist morphisms appear in $\lb_{n_1,\ldots,n_k}(T)$.

\subsection{Diagrammatic integral}\label{SS:diagrammatic_integral}

Let us introduce a key definition for the diagrammatic translation of Hennings' construction.

\begin{definition}\label{D:diagr_int}
 A \textit{diagrammatic integral} $\ell$ of $\TL$ is a family of morphisms 
 \begin{align*}
  \ell_{r-1} &\in \TL(f_{r-1} \otimes f_{r-1},g_{2r-2}), &
  \ell_m &\in \TL(g_m \otimes g_m^*,g_{2r-2})
 \end{align*}
 with $r \leqs m \leqs 2r-2$ satisfying
 \begin{align}
  \pic{int_def_r-1} &= \lambda(x e_{r-1}) \cdot \pic{int_def_2r-2_a} \label{E:int_def_r-1} \\*
  \pic{int_def_m} + \pic{int_def_3r-m-2} &= \lambda(x e_m) \cdot \pic{int_def_2r-2_a} \label{E:int_def_m}
 \end{align}
 for every $x \in \bar{U}$.
\end{definition}

We point out that, by definition, a diagrammatic integral $\ell$ satisfies
\begin{align*}
 \pic{int_remark_r-1_a} &= \pic{int_remark_r-1_b} = \pic{int_remark_r-1_c} = \pic{int_remark_r-1_d} \\*
 \pic{int_remark_m_a} &= \pic{int_remark_m_b} = \pic{int_remark_m_c} = \pic{int_remark_m_d}
\end{align*}
for all integers $r \leqs m \leqs 2r-2$.

Now, despite the fact that Definition~\ref{D:diagr_int} determines a system of $\frac{r-1}{2}$ equations for each element of the quantum group $\bar{U}$, which is a vector space of dimension $r^3$, the actual number of conditions we need to verify in order to check whether a family $\ell$ of morphisms of $\TL$ provides a diagrammatic integral or not can be drastically reduced. Indeed, this will follow essentially from Lemma~\ref{L:ad_m}. In order to explain how, let us start with a quick remark.

\begin{remark}\label{R:ad}
 The linear map sending every $x \in \bar{U}$ to 
 \[
  F_{\bar{U}} \left( \pic{ad_remark} \right) (1) \in X^{\otimes m} \otimes X^{\otimes m} 
 \] 
 defines a $\bar{U}$-module morphism $j_m : \ad \to X^{\otimes m} \otimes X^{\otimes m}$ for every $m \in \N$. Indeed,
 \[
  j_m(\ad_x(y)) = x \cdot j_m(y)
 \]
 follows from equation~\eqref{E:bead_notation_1} for every $x \in \bar{U}$ and $y \in \ad$.
\end{remark}

As we will show now, it is actually sufficient to restrict ourselves to beads labeled by $K^a \in \bar{U}$ with $a \in \Z$ in the range $0 \leqs a \leqs r-1$. Therefore, from now on, for every integer $a \in \Z$ we adopt the shorthand notation
\begin{equation*}
 \pic{bead_notation_4}
\end{equation*}
Furthermore, let us set
\begin{align*}
 [k]_a &:= 
 \begin{cases}
  \displaystyle \frac{[ak]}{[a]} & \mbox{if} \quad a \not\equiv 0 \mod r, \\
  \displaystyle k & \mbox{if} \quad a \equiv 0 \mod r,
 \end{cases} &
 \{ k \}'_a &:= \{ ak \}'
\end{align*}
for all integers $a,k \in \Z$. Remark that $[k]_a$ and $\{ k \}'_a$ are obtained from $[k]$ and $\{ k \}'$ by a change of variable replacing $q$ with $q^a$.

\begin{lemma}
 \begin{samepage}
 A family $\ell$ of morphisms
 \begin{align*}
  \ell_{r-1} &\in \TL(f_{r-1} \otimes f_{r-1},g_{2r-2}), &
  \ell_m &\in \TL(g_m \otimes g_m^*,g_{2r-2})
 \end{align*} 
 with $r \leqs m \leqs 2r-2$ is a diagrammatic integral of $\TL$ if and only if
 \begin{align}
  \pic{int_lemma_r-1} &= [r]_{a+1} \cdot \pic{int_def_2r-2_a} \label{E:int_lemma_r-1} \\*
  \pic{int_lemma_m} + \pic{int_lemma_3r-m-2} &= [r]_{a+1} \{ m+1 \}' \cdot \pic{int_def_2r-2_a} \label{E:int_lemma_m} \\*
  \pic{int_lemma_m_prime} - \pic{int_lemma_3r-m-2_prime} &= - [r]_{a+1} [m+1] \cdot \pic{int_def_2r-2_a} \label{E:int_lemma_m_prime}
 \end{align}
 for every $a \in \Z$, where 
 \[
  \ell'_m := \ell_m (h_m \otimes g_m^*) \in \TL(g_m \otimes g_m^*,g_{2r-2}).
 \]
 \end{samepage}
\end{lemma}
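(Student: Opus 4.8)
The plan is to recast the defining conditions of Definition~\ref{D:diagr_int} as equalities of $\bar{U}$-module morphisms and then reduce them to finitely many scalar identities by means of Lemma~\ref{L:ad_m}. Fix $m$ with $r-1 \leqs m \leqs 3\frac{r-1}{2}$ and regard the bead label $x$ appearing in \eqref{E:int_def_r-1} (for $m = r-1$), or in \eqref{E:int_def_m} (for $r \leqs m \leqs 2r-2$), as a variable. The first step is to observe that, after applying the faithful functor $F_{\bar{U}} \colon \beadC \to \Vect_\C$, \emph{both sides of the relevant identity are $\bar{U}$-module morphisms from $\ad$ to $P_{2r-2} \cong F_{\bar{U}}(g_{2r-2})$}, as functions of $x$: for the left-hand side this follows from Remark~\ref{R:ad} ($j_m$ is $\bar{U}$-linear) together with the fact that the $\ell$'s, being morphisms of $\TL$, are sent by $F_\TL$ to $\bar{U}$-linear maps; for the right-hand side, $\lambda$ is a quantum character, hence a $\bar{U}$-module morphism $\ad \to \C$ by \eqref{E:quantum_char}, multiplication by the central idempotent $e_m$ is the $\bar{U}$-module projection $\ad \twoheadrightarrow \ad_m$, and the fixed target picture is the $F_{\bar{U}}$-image of a fixed morphism, hence a vector spanning a trivial submodule of $P_{2r-2}$. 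Moreover, every composition factor of $P_m \cong F_\TL(g_m)$ and of $X_{r-1} \cong F_\TL(f_{r-1})$ has quantum Casimir eigenvalue $\beta_m$, so a bead of the form $x(1 - e_m)$ on the $g_m$- or $f_{r-1}$-strand is sent to zero; hence the left-hand side also factors through $\ad \twoheadrightarrow \ad_m$. Thus both sides descend to $\bar{U}$-module morphisms $\ad_m \to P_{2r-2}$.

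By Lemma~\ref{L:ad_m}, such a morphism is determined by its restriction to the degree-$0$ subspace $\ad_m^0$. Therefore \eqref{E:int_def_r-1}, resp.\ \eqref{E:int_def_m}, holds for every $x \in \bar{U}$ if and only if it holds for $x$ running over the basis of $\ad_m^0$ recorded before the lemma: $\{K^a e_{r-1}\}$ for $m = r-1$, and $\{K^a e_m,\, K^a w_m\}$ for $r \leqs m \leqs 2r-2$, with $0 \leqs a \leqs r-1$ --- equivalently, since $K^r = 1$, with $a$ ranging over $\Z$. It remains to match these conditions with \eqref{E:int_lemma_r-1}, \eqref{E:int_lemma_m}, \eqref{E:int_lemma_m_prime}. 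On the left-hand side: feeding $x = K^a e_{r-1}$, resp.\ $x = K^a e_m$, the relations \eqref{E:bead_f_m}, resp.\ \eqref{E:bead_g_m_e}, absorb $e_{r-1}$, resp.\ $e_m$ (which acts as the identity on $X_{r-1}$, resp.\ $P_m$), so the bead turns into a plain $K^a$-bead and the left-hand side becomes that of \eqref{E:int_lemma_r-1}, resp.\ \eqref{E:int_lemma_m}. Feeding $x = K^a w_m = K^a(w_m^+ + w_m^-)$, and using $w_m^+ = w_{3r-m-2}^-$, the relations \eqref{E:g_m_w_+} and \eqref{E:bead_g_m_w_-} collapse the bead on the $g_m$-strand to $-[m+1]$ times an $h_m$-decorated $K^a$-strand and the bead on the $g_{3r-m-2}$-strand to $-[3r-m-1] = [m+1]$ times an $h_{3r-m-2}$-decorated $K^a$-strand; unwinding $\ell'_m = \ell_m(h_m \otimes g_m^*)$, the left-hand side of \eqref{E:int_def_m} becomes $-[m+1]$ times that of \eqref{E:int_lemma_m_prime}. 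In particular, for $m = r-1$ the subspace $\ad_{r-1}^0$ contains no $w$-vector (as $\ad_{r-1}$ is projective) and \eqref{E:bead_f_m} annihilates all $w$-beads, so no extra equation appears, as the statement asserts.

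On the right-hand side, one uses $w_m e_m = w_m$ to rewrite $\lambda(K^a e_m \cdot e_m) = \lambda(K^a e_m)$ and $\lambda(K^a w_m \cdot e_m) = \lambda(K^a w_m)$, and then substitutes the decomposition of $\lambda$ from Lemmas~\ref{L:dec_int_r-1} and \ref{L:dec_int_m}. This reduces everything to evaluating the traces $\tau_{r-1}, \tau_m, \tau_{3r-m-2}$ and the pseudo-traces $\tau'_m, \tau'_{3r-m-2}$ on $K^{a+1}$ and on $K^{a+1} w_m$; by the explicit module data of Section~\ref{S:simple_indec_proj} these are geometric sums over $K$-weights, and one obtains $\tau_{r-1}(K^{a+1}) = (\tau_m + \tau_{3r-m-2})(K^{a+1}) = [r]_{a+1}$ (the weights forming a full residue system modulo $r$), $(\tau'_m + \tau'_{3r-m-2})(K^{a+1}) = 0$ (pseudo-traces annihilate the semisimple element $K^{a+1}$), $(\tau_m + \tau_{3r-m-2})(K^{a+1} w_m) = 0$ (traces annihilate the nilpotent direction), and $(\tau'_m + \tau'_{3r-m-2})(K^{a+1} w_m) = [r]_{a+1}$. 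Substituting gives $\lambda(K^a e_{r-1}) = [r]_{a+1}$, $\lambda(K^a e_m) = [r]_{a+1}\{m+1\}'$, and $\lambda(K^a w_m) = [r]_{a+1}[m+1]^2$; dividing the last one by the $-[m+1]$ produced on the left yields precisely the right-hand sides of \eqref{E:int_lemma_r-1}, \eqref{E:int_lemma_m}, and \eqref{E:int_lemma_m_prime}.

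The reduction carried out in the first two paragraphs --- Remark~\ref{R:ad} and Lemma~\ref{L:ad_m} --- is short and robust; I expect \textbf{the main obstacle to lie in the bookkeeping of the last step}: correctly matching the nilpotent $w_m$ with the endomorphism $h_m$ (including the sign $[3r-m-1] = -[m+1]$ and the swap $w_m^+ = w_{3r-m-2}^-$, which together produce the relative minus sign in \eqref{E:int_lemma_m_prime}) and evaluating the four (pseudo-)traces on $K^{a+1}$ and on $K^{a+1} w_m$ with all constants correct, which requires careful tracking of the two projective summands $\bar{P}_m$ and $\bar{P}_{3r-m-2}$ of the generalized Casimir eigenspace $\bar{Q}_m$.
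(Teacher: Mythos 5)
Your argument matches the paper's proof: both reduce Definition~\ref{D:diagr_int} to the degree-zero basis of $\ad_m$ via Remark~\ref{R:ad} and Lemma~\ref{L:ad_m}, convert the $w_m$-beads into $h_m$-insertions through equations~\eqref{E:bead_g_m_e}--\eqref{E:bead_g_m_w_-}, and evaluate $\lambda$ on $K^a e_m$ and $K^a w_m$ using Lemmas~\ref{L:dec_int_r-1} and \ref{L:dec_int_m}. The sign bookkeeping you flag (namely $[3r-m-1]=-[m+1]$, which produces the relative minus sign in \eqref{E:int_lemma_m_prime}) is exactly how the paper obtains that equation.
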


\begin{proof}
 Thanks to Remark~\ref{R:ad}, the left-hand sides of equations~\eqref{E:int_def_r-1} and \eqref{E:int_def_m} determine $\bar{U}$-module morphisms from $\ad_{r-1}$ to $P_{2r-2}$ and from $\ad_m$ to $P_{2r-2}$ respectively. Thanks to Lemma~\ref{L:ad_m}, every morphism of this type is uniquely determined by its restriction to $\ad_{r-1}^0$ and $\ad_m^0$ respectively. Then, we simply need to check that equations~\eqref{E:int_lemma_r-1}, \eqref{E:int_lemma_m}, and \eqref{E:int_lemma_m_prime} are equivalent to equations~\eqref{E:int_def_r-1} and \eqref{E:int_def_m} for $x = K^a$. On one hand, thanks to equations~\eqref{E:bead_g_m_e}-\eqref{E:bead_g_m_w_-}, we have
 \[
  \pic{int_lemma_h_m} = -\frac{1}{[m+1]} \cdot \hspace*{-5pt} \pic{int_lemma_g_m_w_m_+} 
  = -\frac{1}{[m+1]} \cdot \hspace*{-5pt} \pic{int_lemma_g_m_w_m}
  = -\frac{1}{[m+1]} \cdot \hspace*{-10pt} \pic{int_lemma_g_m_C-beta_m}
 \]
 This implies
 \[
  \pic{int_lemma_h_3r-m-2} = -\frac{1}{[3r-m-1]} \cdot \hspace*{-10pt} \pic{int_lemma_g_3r-m-2_C-beta_3r-m-2}
  = \frac{1}{[m+1]} \cdot \hspace*{-10pt} \pic{int_lemma_g_3r-m-2_C-beta_m}
 \]
 On the other hand, thanks to Lemma~\ref{L:dec_int_r-1} we have
 \begin{align*}
  \lambda(K^a e_{r-1}) &= \tau_{r-1}(K^{a+1}) = [r]_{a+1},
 \end{align*}
 and thanks to Lemma~\ref{L:dec_int_m} for every integer $r \leqs m \leqs 2r-2$ we have
 \begin{align*}
  \lambda(K^a e_m) &= \{ m+1 \}' \left( \tau_m(K^{a+1}) + \tau_{3r-m-2}(K^{a+1}) \right) \\*
  &= \{ m+1 \}' \left( [2r-m-1]_{a+1} + [m-r+1]_{a+1} \right) \\*
  &= [r]_{a+1} \{ m+1 \}', \\*
  \lambda(K^a w_m) &= [m+1]^2 \left( \tau'_m(K^{a+1}(C-\beta_m)) + \tau'_{3r-m-2}(K^{a+1}(C-\beta_m)) \right) \\*
  &= [m+1]^2 \left( [2r-m-1]_{a+1} + [m-r+1]_{a+1} \right) \\*
  &= [r]_{a+1} [m+1]^2. \qedhere
 \end{align*}
\end{proof}

\begin{proposition}\label{P:diagramatic_integral}
 The non-semisimple Kirby color of Definition~\ref{D:ns_Kirby_color} is a diagrammatic integral of $\TL$.
\end{proposition}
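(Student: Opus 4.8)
The plan is to invoke the Lemma immediately preceding the statement. Concretely, we take the family $\ell$ with $\ell_{r-1} := \nsKirby_{r-1} = t_{r-1}$ and $\ell_m := \nsKirby_m = (-1)^m \tfrac{\{m+1\}'}{2}\, t_m - (-1)^m [m+1]\, t'_m$ for $r \leqs m \leqs 2r-2$, and verify that identities \eqref{E:int_lemma_r-1}, \eqref{E:int_lemma_m} and \eqref{E:int_lemma_m_prime} hold for every $a \in \Z$, where $\ell'_m = \ell_m(h_m \otimes g_m^*)$. Since the only bead occurring in those identities is $K^a$, each is an equality in $\beadC$ between an explicit $\C$-linear combination of cablings of $g_m$, $h_m$ and $f_{r-1}$ — glued along the morphism $j_\bullet$ of Remark~\ref{R:ad} evaluated at $K^a$ — and a scalar multiple of the fixed target morphism on the right-hand sides. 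So it suffices to evaluate finitely many such $K^a$-bead closures.

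A first, purely formal step handles $\ell'_m$: unwinding the pictures defining $t_m$ and $t'_m$ and applying the fusion/nilpotence relations \eqref{E:g_h} (in particular $g_m g_m = g_m$, $g_m h_m = h_m$, $h_m h_m = 0$) together with \eqref{E:t_g_k_low}--\eqref{E:t_g_h_k_high}, one sees that $\ell_m(h_m \otimes g_m^*)$ collapses onto a single ``doubled-$h_m$'' cap, so that \eqref{E:int_lemma_m_prime} acquires the same shape as \eqref{E:int_lemma_m}. It then remains to evaluate the three types of $K^a$-bead closures, and this is exactly what the formulas of Section~\ref{S:computations} supply: closing $f_{r-1}$ (respectively $g_m$, respectively $h_m$) against a $K^a$-bead in the configuration prescribed by $j_\bullet$ produces, after reinserting $g_{2r-2}$, a scalar multiple of the target morphism, the scalar being a $q^{a+1}$-analogue of an integer expressed through quantities such as $[m+1]_{a+1}$ and $\{m+1\}'_{a+1}$. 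These are bead-refinements of \eqref{E:t_f} and of the ``middle-range'' relations \eqref{E:t_g_k_middle}--\eqref{E:t_h_k_middle}, which are their $a=0$, no-bead specializations.

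Granting these evaluations, the rest is linear-algebra bookkeeping. Applying the closure to $\ell_{r-1} = t_{r-1}$ and using \eqref{E:t_f} with $m = r-1$ yields the factor $[r]_{a+1}$, which is \eqref{E:int_lemma_r-1}. For \eqref{E:int_lemma_m} and \eqref{E:int_lemma_m_prime} the index symmetry $m \leftrightarrow 3r-m-2$ is decisive: since $r$ is odd one has $(-1)^{3r-m-2} = -(-1)^m$, $\{3r-m-1\}' = \{m+1\}'$ and $[3r-m-1] = -[m+1]$, so that in $\ell_m + \ell_{3r-m-2}$ the $t$- and $t'$-contributions recombine (one as a sum, the other as a difference), and dually in $\ell'_m - \ell'_{3r-m-2}$; the piece that survives the $K^a$-bead closure produces the claimed scalar, the other piece closing to zero. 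After this recombination what is left is precisely the additive identity $[2r-m-1]_{a+1} + [m-r+1]_{a+1} = [r]_{a+1}$ already used in the proof of the preceding Lemma, delivering the factors $[r]_{a+1}\{m+1\}'$ and $-[r]_{a+1}[m+1]$. In particular, the coefficients $(-1)^m\tfrac{\{m+1\}'}{2}$ and $-(-1)^m[m+1]$ of Definition~\ref{D:ns_Kirby_color} — including the $\tfrac12$, which is halved exactly because $\ell_m$ is closed together with its symmetric partner $\ell_{3r-m-2}$ — are the unique ones making these combinations come out right.

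The real difficulty lies entirely in the middle step: the honest evaluation of the $K^a$-bead closures of the cabled non-semisimple Jones--Wenzl idempotents, which depends on the recursive structure \eqref{E:g_r_def}--\eqref{E:g_m_def} and requires scrupulous sign bookkeeping --- both the $(-1)^k$ signs of \eqref{E:t_g_k_low}--\eqref{E:t_h_k_low} and the twist/partial-trace sign discrepancy flagged in Remark~\ref{R:sign_discrepancy}, which is already absorbed into the use of the ribbon Kauffman bracket. Once the formulas of Section~\ref{S:computations} are in place, the proposition follows from the reductions described above.
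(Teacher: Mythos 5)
Your proposal is correct and takes essentially the same route as the paper: reduce to the three conditions of the preceding Lemma and discharge them via the $K^a$-bead closure formulas of Section~\ref{S:computations}, which is precisely what the paper does by citing \eqref{E:t_f_m} for \eqref{E:int_lemma_r-1}, \eqref{E:t_g_m} together with \eqref{E:int_g_g_m_k_low} at $k=0$ for \eqref{E:int_lemma_m}, and \eqref{E:t_h_m} together with \eqref{E:int_g_h_m_k_low} at $k=0$ for \eqref{E:int_lemma_m_prime}. One labeling slip: where you write that \eqref{E:t_f} with $m=r-1$ yields $[r]_{a+1}$, you mean its bead-refinement \eqref{E:t_f_m}, since \eqref{E:t_f} alone produces $[r]$ rather than $[r]_{a+1}$.
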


\begin{proof}
 Equation~\eqref{E:int_lemma_r-1} follows from equation~\eqref{E:t_f_m}, equation~\eqref{E:int_lemma_m} follows from equations~\eqref{E:t_g_m} and \eqref{E:int_g_g_m_k_low} with $k=0$, and equation~\eqref{E:int_lemma_m_prime} follows from equations~\eqref{E:t_h_m} and \eqref{E:int_g_h_m_k_low} with $k=0$.
\end{proof}

\section{Proofs}\label{S:proof}

Let us prove all the results we claimed in Sections~\ref{S:TL} and \ref{S:3-manifold}.

\subsection{Proof of results from Section~\ref{S:m-trace}}\label{S:proof_S:m-trace}

\begin{proof}[Proof of Proposition~\ref{P:m-trace_def}]
 As we recalled at the beginning of Section~\ref{S:beads}, if $U$ denotes Lusztig's divided power quantum group of $\fsl_2$, then $\TL$ is equivalent, as a braided monoidal category, to the full monoidal subcategory of $\mods{U}$ generated by the fundamental representation. Under this equivalence, $f_{r-1}$ is sent to the Steinberg module, which is projective \cite[Theorem~9.8]{APW91}. This means $f_{r-1}$ is projective too, and thus it generates $\Idf$ \cite[Lemma~4.4.1]{GKP10}. The rest of the statement follows from \cite[Theorem~5.5, Corollary~5.6]{GKP18}.
\end{proof}

\begin{remark}\label{R:mod_tr_TL}
 It was already observed in Section~\ref{S:ribbon_structures} that the ribbon structure of $\TL$ does not agree with the one of $\mods{\bar{U}}$, see for instance Remark~\ref{R:sign_discrepancy}. This implies in particular that, for every idempotent $p \in \TL(m)$ of $\Idf$ and every endomorphism $u \in \TL(p)$, we have
 \begin{align}\label{E:mod_tr_TL}
  \rmt^\TL_p(u) &= (-1)^m \rmt^{\bar{U}}_{F_\TL(p)} (F_\TL(u)).
 \end{align}
 This sign discrepancy in the comparison between the modified trace of $\TL$ and the one of $\mods{\bar{U}}$ is the reason behind the sign in equation~\eqref{E:ad_bic_link_inv}.
\end{remark}

\subsection{Proof of results from Section~\ref{S:bichrome_link_invariant}}\label{S:proof_bichrome_link_invariant}

\begin{proof}[Proof of Proposition~\ref{P:ad_bic_link_inv}]
  First of all, we need to show that a cutting presentation of $T$ exists.  In order to construct one, let us orient red components of $T$, let us consider disjoint paths $\gamma_i$ for every $1 \leqs i \leqs k$, each joining a basepoint $p_i$ on the $i$th red component $T_i$ to a basepoint $q_i$ on the top line $I \times \{ \frac 12 \} \times \{ 1 \} \subset I^3$, and let us choose a projective blue component of $T$, meaning a blue component labeled by an idempotent $p \in \TL(m)$ of $\Idf$. Let us cut open the projective blue component and all red ones following the specified paths and orientations, thus obtaining the bichrome $k$-top tangle $T'$ represented in Figure~\ref{F:cutting_presentation}. By construction, $T'$ is a cutting presentation of $T$.
  
 \begin{figure}[h]
  \centering
  \includegraphics{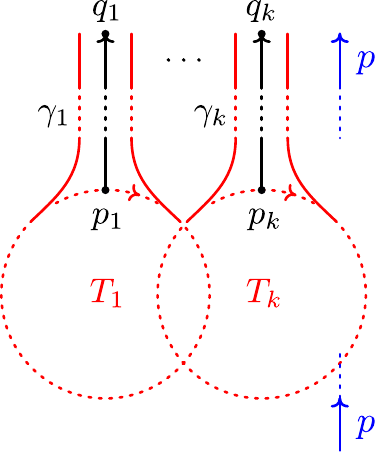}
  \caption{Cutting presentation $T'$ of $T$.}
 \label{F:cutting_presentation}
 \end{figure}
  
 We need to show $F_\nsKirby(T)$ does not depend on the choice of the cutting presentation of $T$. Thanks to equation~\eqref{E:mod_tr_TL}, we have 
 \[
  \rmt^\TL_p(F_{\nsKirby,p}(T')) = (-1)^m \rmt^{\bar{U}}_{F_\TL(p)}(F_\TL(F_{\nsKirby,p}(T'))).
 \]
 The advantage of looking at $F_\TL(F_{\nsKirby,p}(T'))$ rather than $F_{\nsKirby,p}(T')$ is that the former can be computed using a different approach. In order to do this, it will be convenient to fix some additional notation, so let us set
 \[
  \pic{Kirby_morphism_1} := \pic[22.5]{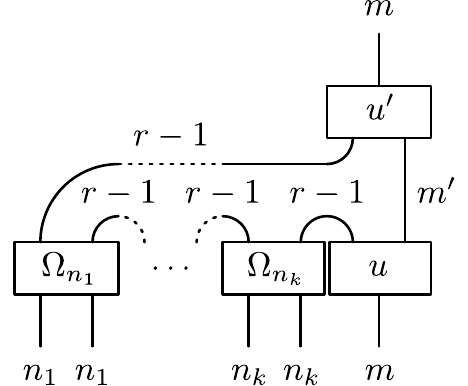}
 \] 
 This means morphisms $\nsKirby_{n_1,\ldots,n_k,p} \in \TL(n_1 \otimes n_1 \otimes \ldots n_k \otimes n_k \otimes m,m)$ satisfy
 \[
  F_{\nsKirby,p}(T') = \sum_{n_1,\ldots,n_k=r-1}^{2r-2} \nsKirby_{n_1,\ldots,n_k,p} \langle T' \rangle_{n_1,\ldots,n_k}^\rb.
 \]
 Thanks to Proposition~\ref{P:beads}, we have
 \begin{align*}
  F_\TL(F_{\nsKirby,p}(T')) 
  &= \sum_{n_1,\ldots,n_k=r-1}^{2r-2} F_\TL(\nsKirby_{n_1,\ldots,n_k,p}) \circ F_\TL(\langle T' \rangle_{n_1,\ldots,n_k}^\rb) \\*
  &= \sum_{n_1,\ldots,n_k=r-1}^{2r-2} F_\TL(\nsKirby_{n_1,\ldots,n_k,p}) \circ F_{\bar{U}} ( B_{n_1,\ldots,n_k} (T') ).
 \end{align*}
 Then, thanks to Proposition~\ref{P:diagramatic_integral}, we have
 \begin{align*}
  &\sum_{n_1,\ldots,n_k=r-1}^{2r-2} F_\TL(\nsKirby_{n_1,\ldots,n_k,p}) \circ F_{\bar{U}}(B_{n_1,\ldots,n_k}(T')) = \left( \prod_{i=1}^k \lambda(x_i(T')) \right) F_{\bar{U}}(\tilde{T}'),
 \end{align*}
 where $x_i(T') \in \bar{U}$ and $\tilde{T}' \in \beadC(p)$ are given by equation~\eqref{E:bead_presentation}. Summing up
 \begin{align*}
  F_\nsKirby(T) = \rmt^{\bar{U}}_{F_\TL(p)} \left( \left( \prod_{i=1}^k \lambda(x_i(T')) \right) F_{\bar{U}}(\tilde{T}') \right).
 \end{align*} 
 We will show now that the fact that $F_\nsKirby(T)$ is independent of the cutting presentation $T'$ of $T$ follows essentially from the fact that $\lambda$ is a quantum character, and that $\rmt^{\bar{U}}$ is a modified trace.

 \begin{figure}[b]
  \centering
  \includegraphics{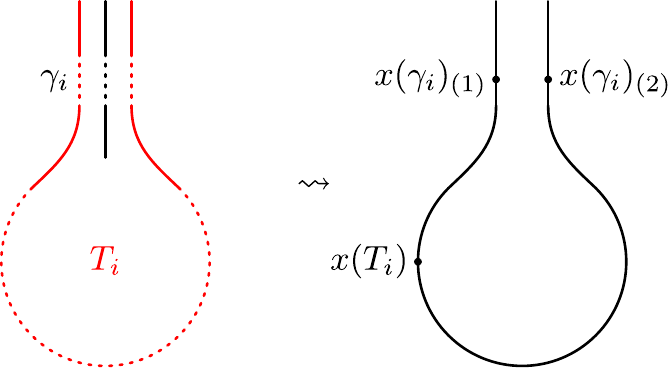}
  \caption{Independence of path.}
 \label{F:independence_path}
 \end{figure}  
 
 First of all, we claim $F_\nsKirby(T)$ does not depend on the choice of the path $\gamma_i$. Indeed, we can decompose $x_i(T')$ as $x(\gamma_i)_{(1)} x(T_i) S(x(\gamma_i)_{(2)})$, where $x(\gamma_i)$ is collected traveling along $\gamma_i$, and $x(T_i)$ is collected traveling along $T_i$, as shown in Figure~\ref{F:independence_path}. This means
 \begin{align*}
  \lambda(x(\gamma_i)_{(1)} x(T_i) S(x(\gamma_i)_{(2)})) &= \lambda(x(T_i) S(x(\gamma_i)_{(2)}) S^2(x(\gamma_i)_{(1)})) \\*
  &= \lambda(x(T_i) S(S(x(\gamma_i)_{(1)})x(\gamma_i)_{(2)})) \\*
  &= \epsilon(x(\gamma_i)) \lambda(x(T_i)),
 \end{align*}
 where the first equality follows from the fact that $\lambda$ is a quantum character. Then, since $x(\gamma_i)$ is a product of copies of components of the R-matrix and copies of the pivotal element, which satisfy
 \begin{align*}
  \epsilon(R') R'' &= \epsilon(R'') R' = 1, & \epsilon(g) &= 1,
 \end{align*}
 the contribution of the framed path $\gamma_i$ is trivial, both for the computation of $\lambda(x_i(T'))$ and for its effect on other components of $T'$.
 
 \begin{figure}[t]
  \centering
  \includegraphics{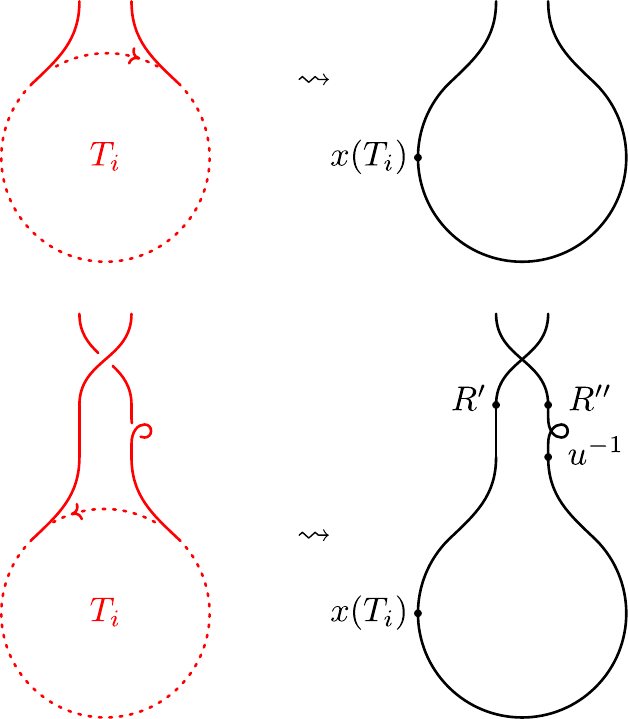}
  \caption{Independence of orientation.}
 \label{F:independence_orientation}
 \end{figure} 
 
 Next, we claim $F_\nsKirby(T)$ does not depend on the choice of the orientation of $T_i$. Indeed, we can switch between the two possible ones by adding a braiding and a twist, as shown in Figure~\ref{F:independence_orientation}. This means
 \begin{align*}
  \lambda(R''u^{-1}S(x(T_i))S(R')) &= \lambda(S^{-1}(R')R''u^{-1}S(x(T_i))) \\*
  &= \lambda(S(u)u^{-1}S(x(T_i))) \\*
  &= \lambda(g^{-2}S(x(T_i))) \\*
  &= \lambda(x(T_i)),
 \end{align*}
 where we are using the identities
 \begin{align*}
  S(R) \otimes S(R'') &= R' \otimes R'', &
  S(u) &= g^{-1}v_+, &
  u^{-1} &= g^{-1}v_-,
 \end{align*}
 as well as \cite[Proposition~4.2]{H96}, which gives, for every $x \in \bar{U}$, the identity
 \[
  \lambda(g^{-2}S(x)) = \lambda(x).
 \]

 \begin{figure}[b]
  \centering
  \includegraphics{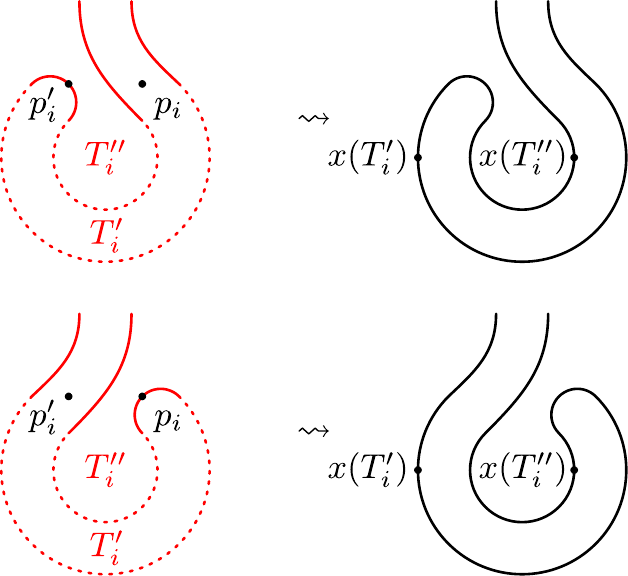}
  \caption{Independence of basepoint.}
 \label{F:independence_basepoint}
 \end{figure} 
 
 Now, we claim $F_\nsKirby(T)$ does not depend on the choice of the basepoint $p_i$. Indeed, if $p_i'$ is another basepoint, we can decompose $x(T_i)$ as $x(T''_i) x(T'_i)$, where $x(T'_i)$ is collected traveling from $p_i$ to $p'_i$, and $x(T''_i)$ is collected traveling from $p'_i$ to $p_i$, as shown in Figure~\ref{F:independence_basepoint}. This means
 \[
  \lambda(x(T''_i)x(T'_i)) = \lambda(x(T'_i)S^2(x(T''_i))).
 \]

 Finally, we claim $F_\nsKirby(T)$ does not depend on the choice of the projective blue component of $T$. Indeed, if
 \[
  \pic{independence_component}
 \]
 are different cutting presentations of $T$, then
 \begin{align*}
  \rmt^{\bar{U}}_{F_\TL(p)} (F_{\bar{U}}(\tilde{T}'_p))
  &= \rmt^{\bar{U}}_{F_\TL(p)} (\ptr_{F_\TL(p')}(F_{\bar{U}}(\tilde{T}''))) \\*
  &= \rmt^{\bar{U}}_{F_\TL(p \otimes p')} (F_{\bar{U}}(\tilde{T}'')) \\*
  &= \rmt^{\bar{U}}_{F_\TL(p' \otimes p)} (F_\TL(c_{p,p'}) \circ F_{\bar{U}}(\tilde{T}'') \circ F_\TL(c_{p,p'}^{-1})) \\*
  &= \rmt^{\bar{U}}_{F_\TL(p')} (\ptr_{F_\TL(p)}(F_\TL(c_{p,p'}) \circ F_{\bar{U}}(\tilde{T}'') \circ F_\TL(c_{p,p'}^{-1}))) \\*
  &= \rmt^{\bar{U}}_{F_\TL(p)} (F_{\bar{U}}(\tilde{T}'_{p'})),
 \end{align*}
 because $\rmt^{\bar{U}}$ is a modified trace.
\end{proof}

\subsection{Proof of results from Section~\ref{S:3-manifold_invariant}}\label{S:proof_3-manifold_invariant}

\begin{proof}[Proof of Lemma~\ref{L:nstab}]
 It follows from the proof of Proposition~\ref{P:ad_bic_link_inv} and from equation~\eqref{E:ns_stab_par} that
 \begin{align*}
  \Delta_\pm &= \lambda(v_\mp) = i^{\mp \frac{r-1}{2}} r^{\frac 32} q^{\frac{r \mp 3}{2}}. \qedhere
 \end{align*}
\end{proof}

\begin{proof}[Proof of Theorem~\ref{T:3-manifold_inv}]
 
 The proof of the invariance of $Z_r(M,T)$ under Kirby moves follows the same argument of \cite[Proposition~2.13]{DGP17}. Indeed, if the bead collected while traveling along a red component has label $x$, then the operation of sliding a strand, either blue or red, adds a bead with label $R' x_{(1)}$ on the sliding component, and changes the label of the red component to $R''_{(1)} x_{(2)} S(R''_{(2)})$, as shown in Figure~\ref{F:invariance}. This means
 \begin{align*}
  \lambda(R''_{(1)} x_{(2)} S(R''_{(2)})) R' x_{(1)} &= \lambda(x_{(2)} S(R''_{(2)}) S^2(R''_{(1)} )) R' x_{(1)} \\*
  &= \lambda(x_{(2)} S(S(R''_{(1)})R''_{(2)}) ) R' x_{(1)} \\*
  &= \epsilon(R'') \lambda(x_{(2)}) R' x_{(1)} \\*
  &= \lambda(x_{(2)}) x_{(1)} \\*
  &= \lambda(x) 1,
 \end{align*}
 because $\lambda$ is a left integral. Therefore, $Z_r(M,T)$ is invariant under Kirby II moves. 
 
 Furthermore, it follows from Lemma~\ref{L:nstab} that adding an unknotted red component of framing $\pm 1$ contributes a factor of 
 \[
  \lambda(v_{\mp}) = \Delta_\pm.
 \]
 Therefore, $Z_r(M,T)$ is also invariant under Kirby I moves. \qedhere
 
 \begin{figure}[t]
  \centering
  \includegraphics{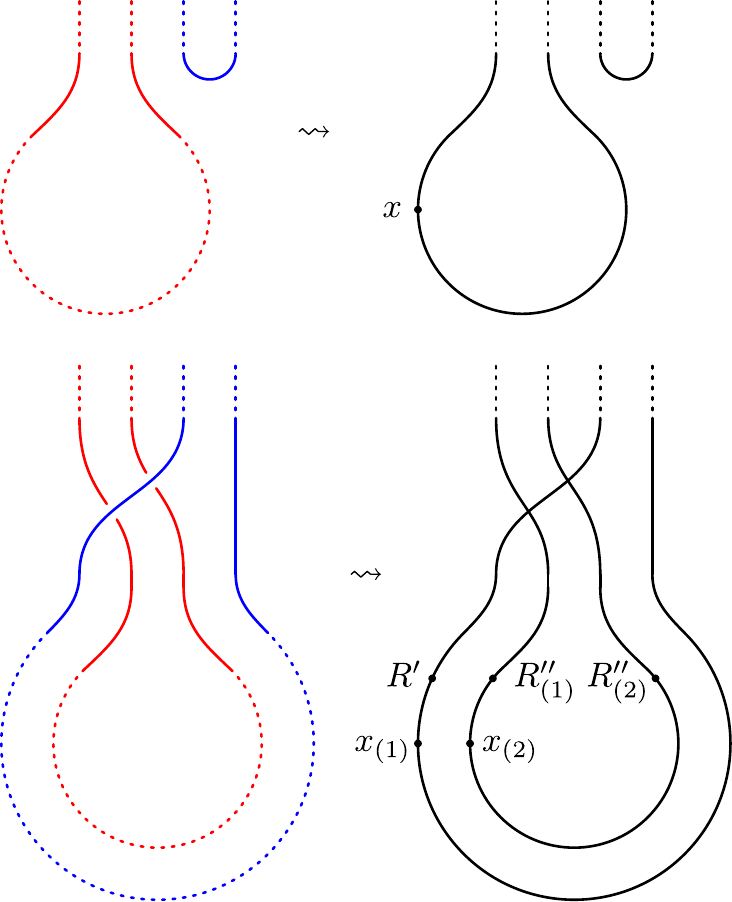}
  \caption{Invariance under Kirby II moves.}
 \label{F:invariance}
 \end{figure}
 
\end{proof}

\FloatBarrier

\begin{proof}[Proof of Lemma~\ref{L:sstab}]
 On one hand, thanks to Lemma~\ref{L:ribbon_Jordan} and equations~\eqref{E:bead_f_m} and \eqref{E:t_f}, we have
 \begin{align*}
  \delta_{+,m} &:= \hspace*{-10pt} \pic{stabilization_proof_+_1} \hspace*{-10pt} 
  = \hspace*{-15pt} \pic{stabilization_proof_+_2} \hspace*{-10pt} = (-1)^m q^{\frac{r+1}{2} m^2 + m} [m+1], \\
  \delta_{-,m} &:= \hspace*{-10pt} \pic{stabilization_proof_-_1} \hspace*{-10pt} 
  = \hspace*{-15pt} \pic{stabilization_proof_-_2} \hspace*{-10pt} = (-1)^m q^{\frac{r-1}{2} m^2 - m} [m+1].
 \end{align*} 
 Remark that for every integer $0 \leqs m \leqs r-2$ we have
 \begin{align*}
  \delta_{\pm,m} &= \delta_{\pm,r-m-2}, &
  (-1)^m [m+1] &= (-1)^{r-m-2 }[r-m-1].
 \end{align*}
 Therefore, we have
 \begin{align*}
  \delta_\pm
  &= \sum_{m=0}^{\frac{r-3}{2}} [2m+1] \delta_{\pm,2m} 
  = \sum_{m=0}^{\frac{r-3}{2}} (-1)^m [m+1] \delta_{\pm,m}
  = \sum_{m=0}^{\frac{r-3}{2}} q^{\frac{r\pm1}{2} m^2 \pm m} [m+1]^2.
 \end{align*}
 On the other hand, equation~\eqref{E:decomposition_integral} can be rearranged as 
 \begin{align*}
  \lambda = \tau_{r-1}(\_ K) 
  &+ \sum_{m=0}^{\frac{r-1}{2}} \{ m+1 \}' \left( \tau_{2r-m-2}(\_ K) + \tau_{m+r}(\_ K) \right) \\*
  &+ \sum_{m=0}^{\frac{r-1}{2}} [m+1]^2 \left( \tau'_{2r-m-2}(\_ K) + \tau'_{m+r}(\_ K) \right),
 \end{align*}
 and Lemma~\ref{L:ribbon_Jordan} can be rewritten as
 \begin{align*}
  v_\mp = q^{\frac{r \mp 1}{2}} e_{r-1} &+ \sum_{m=0}^{\frac{r-1}{2}} q^{\frac{r \pm 1}{2} m^2 \pm m} e_m  \\*
  &\pm \sum_{m=0}^{\frac{r-1}{2}} q^{\frac{r \pm 1}{2} m^2 \pm m} \left( \frac{(m+1) \{ 1 \}}{[m+1]} w_m^+ + \frac{(m-r+1) \{ 1 \}}{[m+1]} w_m^- \right).
 \end{align*}
 Therefore, we have
 \begin{align*}
  \lambda(v_\mp) &= \sum_{m=0}^{\frac{r-1}{2}} q^{\frac{r \pm 1}{2} m^2 \pm m} \{ m+1 \}' \left( [m+1] - [r-m-1] \right) \\*
  &\pm \sum_{m=0}^{\frac{r-1}{2}} q^{\frac{r \pm 1}{2} m^2 \pm m} [m+1]^2 \left( (m+1) \{ 1 \} - (m-r+1) \{ 1 \} \right) \\*
  &= \pm r \{ 1 \} \sum_{m=0}^{\frac{r-3}{2}} q^{\frac{r \pm 1}{2} m^2 \pm m} [m+1]^2.
 \end{align*}
 This means
 \begin{align*}
  \delta_\pm 
  &= \pm \frac{\lambda(v_\mp)}{r \{ 1 \}} 
  = \pm \frac{i^{\mp \frac{r-1}{2}} r^{\frac 12} q^{\frac{r \mp 3}{2}}}{\{ 1 \}}. \qedhere
 \end{align*}
\end{proof}

\begin{proof}[Proof of Proposition~\ref{P:connected_sum}]
 It follows from the construction that
 \[
  Z_r(M \# M',T \cup T') = \HKR_r(M,T) Z_r(M',T'),
 \]
 where $\HKR_r$ is the Hennings invariant associated with $\bar{U}$, compare with \cite[Proposition~2.11]{DGP17} and \cite[Proposition~3.11]{DGGPR19}. Now the result follows directly from \cite[Theorem~1]{CKS07}.
\end{proof}


\section{Computations}\label{S:computations}

In this section we collect formulas which are used in the proof of Proposition~\ref{P:diagramatic_integral}. Everything is based on equations~\eqref{E:bead_notation_1}-\eqref{E:bead_notation_3}.

\subsection{Traces}\label{SS:traces}

Let us start by computing traces of $K^a$ in $\beadC$ for every $a \in \Z$.

\begin{lemma}\label{L:trace_f_m}
 For every integer $1 \leqs m \leqs r-1$ we have
 \begin{equation}\label{E:t_f_m}
  \pic{t_f_m} = (-1)^m [m+1]_{a+1}.
 \end{equation}
\end{lemma}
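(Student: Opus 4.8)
The left-hand side of \eqref{E:t_f_m} is a scalar in $\beadC(0,0) = \C$, which I would evaluate by pushing the closed diagram through the faithful functor $F_{\bar{U}}$ of Section~\ref{SS:bead_category} (it leaves scalars unchanged). Since $F_\TL$ sends $1 \in \TL$ to the fundamental representation $X = X_1$, it sends the Jones--Wenzl idempotent $f_m$ to the idempotent $\pi_m \in \End_{\bar{U}}(X^{\otimes m})$ projecting $X^{\otimes m}$ onto its unique direct summand isomorphic to $X_m$: indeed $F_\TL(f_m)$ is an idempotent annihilating the image of every cup--cap generator, which for $1 \leqs m \leqs r-1$ forces its image to be the top constituent $X_m$ (occurring with multiplicity one in $X^{\otimes m}$). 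Moreover, because $K$ is group-like we have $\Delta^{(m-1)}(K^a) = K^a \otimes \cdots \otimes K^a$, so under the convention \eqref{E:bead_notation_2} the bead labeled $K^a$ carried by the $m$-labeled strand is sent by $F_{\bar{U}}$ to the operator on $X^{\otimes m}$ given by the action of $K^a \in \bar{U}$; this operator commutes with $\pi_m$, as $\pi_m$ is $\bar{U}$-linear.

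Next I would invoke the ribbon-structure discrepancy recorded in Remark~\ref{R:sign_discrepancy}. The left-hand side of \eqref{E:t_f_m} is the ribbon trace, taken in $\beadC$ (whose ribbon structure is inherited from $\TL$), of $f_m$ with this bead inserted, so \eqref{E:ptr_TL_general} applied with $k = m$ (the diagram involves $m$ strands) shows that it equals $(-1)^m$ times the trace over $X^{\otimes m}$ of $K \cdot \pi_m \cdot K^a$, where the extra factor of $K$ is the pivotal element $g = K$ acting on $X^{\otimes m}$. Since $\pi_m$ commutes with the action of $K$ and $K \cdot K^a = K^{a+1}$, this trace is $\tr_{X^{\otimes m}}(\pi_m \cdot K^{a+1}) = \tr_{X_m}(K^{a+1})$.

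It remains to compute this last trace. Using the standard basis $\{ a_j^m \mid 0 \leqs j \leqs m \}$ of $X_m$, on which $K \cdot a_j^m = q^{m-2j} a_j^m$, one gets
\[
 \tr_{X_m}(K^{a+1}) = \sum_{j=0}^m q^{(a+1)(m-2j)} = [m+1]_{a+1},
\]
the last equality being, by definition of $[\,\cdot\,]_{a+1}$, the quantum integer $[m+1]$ evaluated at $q^{a+1}$ in place of $q$ (and reducing to $m+1 = [m+1]_{a+1}$ when $a+1 \equiv 0 \bmod r$, since then $q^{a+1} = 1$). Combining the three steps yields $\pic{t_f_m} = (-1)^m [m+1]_{a+1}$.

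The main obstacle is the sign-and-pivotal bookkeeping in the middle step, i.e.\ making sure that closing up an $m$-strand diagram in $\beadC$ produces exactly the sign $(-1)^m$ relative to the $\mods{\bar{U}}$-style trace (the iterated form of \eqref{E:ptr_TL_general}) and that the pivotal contributes exactly one extra $K$, so that the exponent appearing is $a+1$ and not $a$ or $a-1$. A useful check is the case $m = 1$, where $f_1 = \id_1$ and a direct computation from the explicit formulas \eqref{E:ev_TL}--\eqref{E:coev_TL} for $F_\TL(\ev_1)$ and $F_\TL(\coev_1)$ returns $-\{ a+1 \}' = (-1)^1 [2]_{a+1}$. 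If one prefers to avoid the functor $F_{\bar{U}}$, an alternative is induction on $m$ using the Jones--Wenzl recursion \eqref{E:f_m_def} and the bead-sliding rules of Section~\ref{SS:bead_category}: the cup--cap term of \eqref{E:f_m_def} contracts to a loop carrying the trivial representation, on which $K$ acts trivially, and one obtains the three-term recursion $t_m = -\{ a+1 \}'\, t_{m-1} - t_{m-2}$ with $t_0 = 1$ and $t_1 = -\{ a+1 \}'$, whose unique solution is $(-1)^m [m+1]_{a+1}$; in this route the delicate point is instead the bead-decorated version of the standard partial-trace identity for Jones--Wenzl idempotents.
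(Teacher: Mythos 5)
Your primary argument is correct but follows a genuinely different route from the paper: the paper proves \eqref{E:t_f_m} by induction on $m$, exactly along the lines of the ``alternative'' you sketch at the end. Its base case is the $m=1$ computation you performed from \eqref{E:ev_TL}--\eqref{E:coev_TL}, and its inductive step applies the recursion \eqref{E:f_m_def} together with the bead rules \eqref{E:bead_notation_1}--\eqref{E:bead_notation_3}, arriving at $(-1)^m\bigl([m]_{a+1}[2]_{a+1}-[m-1]_{a+1}\bigr)=(-1)^m[m+1]_{a+1}$, which is precisely your three-term recursion $t_m=-\{a+1\}'\,t_{m-1}-t_{m-2}$. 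Your main route instead evaluates the closed diagram under $F_{\bar{U}}$: it identifies $F_\TL(f_m)$ with the projector onto the multiplicity-one summand $X_m\subset X^{\otimes m}$ (this is \cite[Lemma~4.1]{BDM19}, the same input the paper invokes for \eqref{E:bead_f_m}--\eqref{E:bead_g_m_w_-}), uses that the group-like bead $K^a$ acts diagonally and commutes with the projector, extracts the sign $(-1)^m$ and the pivotal factor $K$ from the iterated form of \eqref{E:ptr_TL_general}, and computes the character $\tr_{X_m}(K^{a+1})=[m+1]_{a+1}$. All of this is sound, including the two points you flag as delicate (the exponent shift $a\mapsto a+1$ and the overall sign, which indeed transfers from $\TL$ to $\beadC$ because the structure morphisms are unchanged by adding beads), and your $m=1$ sanity check is the right one. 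The character-theoretic shortcut is shorter and explains the answer conceptually as a quantum character; the paper's diagrammatic induction has the advantage of being uniform with the rest of Section~\ref{S:computations}, where the same recursive scheme is what actually carries the much harder closures and partial traces of $g_m$ and $h_m$, for which no comparably clean one-line character evaluation is used.
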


\begin{proof}
 Equation~\eqref{E:t_f_m} is proved by induction on $1 \leqs m \leqs r-1$. If $m = 1$ then, thanks to equation~\eqref{E:bead_notation_1},
 \begin{align*}
  \pic{proof_t_f_1_1} = -[2]_{a+1}.
 \end{align*}
 If $2 \leqs m \leqs r-1$ then by induction hypothesis, thanks to equations~\eqref{E:bead_notation_1}-\eqref{E:bead_notation_3},
 \begin{align*}
  &\pic{proof_t_f_m_1} + \frac{[m-1]}{[m]} \cdot \pic{proof_t_f_m_2} \\*
  &\hspace*{\parindent} = -[2]_{a+1} \cdot \pic{proof_t_f_m_3} + \frac{[m-1]}{[m]} \cdot \pic{proof_t_f_m_4} \\*
  &\hspace*{\parindent} = (-1)^m \left( [m]_{a+1} [2]_{a+1} - [m-1]_{a+1} \right) = (-1)^m [m+1]_{a+1}. \qedhere
 \end{align*}
\end{proof}

\begin{lemma}\label{L:trace_g_m_h_m}
 For every integer $r \leqs m \leqs 2r-2$ we have
\begin{align}
 \pic{t_g_m} &= (-1)^m [r]_{a+1} \{ m-r+1 \}'_{a+1}, \label{E:t_g_m} \\*
 \pic{t_h_m} &= 0. \label{E:t_h_m}
\end{align}
\end{lemma}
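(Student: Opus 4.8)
The plan is to compute both closed-loop traces by expanding the non-semisimple Jones--Wenzl idempotents $g_m$ and $h_m$ via the recursive relations \eqref{E:g_r_def}--\eqref{E:g_m_def} and \eqref{E:h_def}, and reducing everything to the already-established trace formula for simple Jones--Wenzl idempotents, namely Lemma~\ref{L:trace_f_m}. First I would treat \eqref{E:t_h_m}, which should be the easier of the two: the definition \eqref{E:h_def} expresses $h_m$ as a scalar multiple $(-1)^{m+1}[m+1]$ times a diagram that factors through $f_{r-1}$ with the ``extra'' strands passing through caps and cups. Closing this diagram into a loop and inserting a $K^{a+1}$-bead produces a configuration that, after isotopy, contains a closed loop of $2r-m-2 < r-1$ strands carrying a Jones--Wenzl idempotent $f_{r-1}$ restricted to fewer strands — equivalently, by the absorption property \eqref{E:f_f} and the partial-trace relations, a trace of the form $\pic{t_f_m}$ evaluated in a degenerate range, which vanishes. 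More precisely, the trace of $h_m$ factors through the trace of a morphism supported on $f_{r-1}\otimes(\text{something})$, and the relevant partial closure of $f_{r-1}$ vanishes because it contains a closed strand carrying $f_{r-1}$ cabled in a way that forces a factor $[r]=0$; this is the standard mechanism behind \eqref{E:t_f} with $m=r-1$.

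For \eqref{E:t_g_m} I would argue by induction on $m$ in the range $r \leqs m \leqs 2r-2$, exactly parallelling the proof of Lemma~\ref{L:trace_f_m}. The base case $m=r$ uses \eqref{E:g_r_def}, which expresses $g_r$ as a partial closure of $f_{r-1}$ (one strand of the Steinberg idempotent bent around); closing it with a $K^{a+1}$-bead and using equation~\eqref{E:bead_notation_1} together with $\pic{t_f_m}$ for $f_{r-1}$ (which gives $(-1)^{r-1}[r]_{a+1}$) yields $(-1)^r[r]_{a+1}\{1\}'_{a+1}$, matching the claimed value since $\{r-r+1\}'_{a+1}=\{1\}'_{a+1}$. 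For the inductive step with $r+1 \leqs m \leqs 2r-2$, I expand $g_m$ via \eqref{E:g_m_def}: the first term on the right contributes (by induction and equation~\eqref{E:bead_notation_2}) a term $(-1)^{m-1}[r]_{a+1}\{m-r\}'_{a+1}\{1\}'_{a+1}$ coming from the extra strand carrying $K^{a+1}$; the second term contributes $\frac{[m-1]}{[m]}$ times $(-1)^{m-1}[r]_{a+1}\{m-r-1\}'_{a+1}$ (a trace with one fewer cap/cup), but one must be careful: the coefficient $\frac{[m-1]}{[m]}$ here is a ratio of \emph{standard} quantum integers, not the $a$-deformed ones, whereas the trace values are in the $q^{a+1}$ variable — so a trigonometric identity $\{k\}'\{1\}' = \{k+1\}' + \{k-1\}'$ in the $q^{a+1}$ variable has to be combined correctly with the recursion in the $q$ variable. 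The third term, $-\frac{2}{[m]^2}$ times the trace of $h_m$, vanishes by \eqref{E:t_h_m} (in the inductive step I would prove both formulas simultaneously, or prove \eqref{E:t_h_m} first and cite it). Collecting the surviving terms and using the product-to-sum identity $\{m-r+1\}'_{a+1} = \{m-r\}'_{a+1}\{1\}'_{a+1} - \{m-r-1\}'_{a+1}$ should give $(-1)^m[r]_{a+1}\{m-r+1\}'_{a+1}$, completing the induction.

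The main obstacle I anticipate is bookkeeping the interaction between the two different quantum-integer normalizations: the recursive coefficients in \eqref{E:g_r+1_def}--\eqref{E:g_m_def} involve $[k]$ and $[2]$ in the original variable $q$, while the trace outputs live in the $q^{a+1}$ world through $[k]_{a+1}$ and $\{k\}'_{a+1}$. Making sure that the ``$[2]$'' appearing when a $K^{a+1}$-bead slides around a closed loop (equation~\eqref{E:bead_notation_1}) is the \emph{deformed} $[2]_{a+1} = \{1\}'_{a+1} = q^{a+1}+q^{-(a+1)}$, and tracking how this interacts with the undeformed recursion coefficients, is where sign and index errors are most likely to creep in. A secondary subtlety is the precise vanishing mechanism for $\pic{t_h_m}$: one should check that the diagram really does close up to a multiple of the trace of a Jones--Wenzl idempotent in a range where \eqref{E:t_f} forces $0$, rather than leaving a spurious nonzero contribution from the nilpotent part; appealing to \eqref{E:t_h_k_middle} or \eqref{E:t_g_h_k_high} (partial traces of $h_m$ landing in $f_{r-1}$) together with the fact that the trace of $f_{r-1}$ cabled through a loop vanishes should settle this cleanly.
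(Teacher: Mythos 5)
Your overall strategy — compute both traces by inducting on $m$ with the recursions~\eqref{E:g_r_def}--\eqref{E:g_m_def}, reduce to bead-decorated traces of simple Jones--Wenzl idempotents, and land on the product-to-sum identity $\{m-r\}'_{a+1}[2]_{a+1} - \{m-r-1\}'_{a+1} = \{m-r+1\}'_{a+1}$ — is exactly the paper's approach. However, there are two genuine gaps.

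First, you lump $m=r+1$ into the inductive step and expand via~\eqref{E:g_m_def}, but that recursion does not define $g_{r+1}$: the correct relation for $m=r+1$ is~\eqref{E:g_r+1_def}, which has a very different shape, with coefficients $-1,-1,-[2]$ rather than $1, \frac{[m-1]}{[m]}, -\frac{2}{[m]^2}$. (Formally evaluating~\eqref{E:g_m_def} at $m=r+1$ even kills the second term outright, since $[r]=0$, and this is \emph{not} the definition.) The paper accordingly treats both $m=r$ and $m=r+1$ as separate base cases, and only starts the genuine induction at $m=r+2$. You need the same split.

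Second, the bookkeeping you flag around the coefficient $\frac{[m-1]}{[m]}$ is a real problem with the version you write down, not just a caution. If the second term of~\eqref{E:g_m_def} genuinely contributed $\frac{[m-1]}{[m]}\cdot(-1)^{m-1}[r]_{a+1}\{m-r-1\}'_{a+1}$, the product-to-sum identity you invoke (which has no $\frac{[m-1]}{[m]}$ in it) would not close the induction. What actually happens is that evaluating the trace of the cap-cup diagram introduces a compensating factor of $\frac{[m]}{[m-1]}$ — a consequence of the partial-trace identity~\eqref{E:t_g_k_low} with $k=1$ — so that the $q$-variable coefficient cancels completely before the $q^{a+1}$-variable identity is applied. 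Without tracking that cancellation explicitly, the inductive step does not go through. Your treatment of~\eqref{E:t_h_m} is in the right direction (a closed $f_{r-1}$ loop bead-free produces $[r]=0$, which is the mechanism at work), though it would need to be pinned down to ensure the bead really does land on the strands that get absorbed before the $f_{r-1}$ closure.
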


\begin{proof}
 Equation~\eqref{E:t_h_m} follows from equations~\eqref{E:bead_notation_1}-\eqref{E:bead_notation_3}, which give
 \begin{align*}
  (-1)^{m+1}[m+1] \cdot \pic{proof_t_h_m_1} = 0.
 \end{align*}
 Equation~\eqref{E:t_g_m} is proved by induction on $r \leqs m \leqs 2r-2$. If $m = r$ then, thanks to equation~\eqref{E:bead_notation_1},
 \begin{align*}
  \pic{proof_t_g_r_1} = - [r]_{a+1} \{ 1 \}'_{a+1}.
 \end{align*}
 If $m = r+1$ then, thanks to equations~\eqref{E:bead_notation_1}-\eqref{E:bead_notation_3},
 \begin{align*}
  &\pic{proof_t_g_r+1_1} - \pic{proof_t_g_r+1_2} - \pic{proof_t_g_r+1_3} - [2] \cdot \pic{proof_t_g_r+1_4} \\*
  &\hspace*{\parindent} = -[2]_{a+1} \cdot \pic{proof_t_g_r+1_5} - 2 \cdot \pic{proof_t_g_r+1_6} \\*
  &\hspace*{\parindent} = [r]_{a+1} \left( \{ 1 \}'_{a+1} [2]_{a+1} - 2 \right)
  = [r]_{a+1} \{ 2 \}'_{a+1}.
 \end{align*}
 If $r+2 \leqs m \leqs 2r-2$ then by induction hypothesis, thanks to equations~\eqref{E:bead_notation_1}-\eqref{E:bead_notation_3},
 \begin{align*}
  &\pic{proof_t_g_m_1} + \frac{[m-1]}{[m]} \cdot \pic{proof_t_g_m_2} - \frac{2}{[m]^2} \cdot \pic{proof_t_g_m_3} \\*
  &\hspace*{\parindent} = -[2]_{a+1} \cdot \pic{proof_t_g_m_4} + \frac{[m-1]}{[m]} \cdot \pic{proof_t_g_m_5} - \frac{2}{[m]^2} \cdot \pic{proof_t_g_m_6} \\*
  &\hspace*{\parindent} = (-1)^m [r]_{a+1} \left( \{ m-r \}'_{a+1} [2]_{a+1} - \{ m-r-1 \}'_{a+1} \right) \\*
  &\hspace*{\parindent} = (-1)^m [r]_{a+1} \{ m-r+1 \}'_{a+1}. \qedhere
 \end{align*}
\end{proof}

\subsection{Partial traces}\label{SS:partial_traces}

Next, let us tackle some harder computations. In order to do this, we will make extensive use of the identity
\begin{equation}\label{E:three_terms}
 [a][b-c]+[b][c-a]+[c][a-b]=0,
\end{equation}
which holds for all integers $a,b,c \in \Z$.

\begin{lemma}\label{L:open_f_m}
 For all integers $1 \leqs m \leqs r-1$ and $0 \leqs k \leqs m$ we have
 \begin{align}
  \pic{open_p_f_m_k} &= \left( - \frac{[m+1]}{[m]} + \frac{[m-k+1][m-k]}{[m]^2} \right) \cdot \pic{open_f_m-1_k-1} \nonumber \\*
  &\hspace*{\parindent} - \frac{[2]_{a+1} [m-k]^2}{[m]^2} \cdot \pic{open_f_m-1_k} \nonumber \\*
  &\hspace*{\parindent} + \frac{[m-k][m-k-1]}{[m]^2} \cdot \pic{open_f_m-1_k+1} \label{E:open_f_m_k}
 \end{align}
\end{lemma}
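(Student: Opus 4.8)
The plan is to establish \eqref{E:open_f_m_k} by a direct diagrammatic computation that lowers the level of the Jones--Wenzl idempotent appearing in $\pic{open_p_f_m_k}$ from $m$ to $m-1$, using the Wenzl recursion \eqref{E:f_m_def}. Concretely, I would first replace the occurrence of $f_m$ in $\pic{open_p_f_m_k}$ by the right-hand side of \eqref{E:f_m_def}, namely $f_{m-1}\otimes\id_1$ plus $\frac{[m-1]}{[m]}$ times the diagram in which a cup--cap is sandwiched between two copies of $f_{m-1}\otimes\id_1$, and distribute. In each resulting summand the beads are then slid along their components, across the local extrema created by the recursion, using the rules \eqref{E:bead_notation_1}--\eqref{E:bead_notation_3}; whenever this produces a closed circle carrying a single $K^a$-bead it is evaluated to $-[2]_{a+1}$, exactly as in the base step of the proof of Lemma~\ref{L:trace_f_m}, and whenever a closed component carrying a copy of $f_\bullet$ appears it is collapsed by Lemma~\ref{L:trace_f_m} directly.

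The next step is to bring each surviving diagram into one of the three standard shapes $\pic{open_f_m-1_k-1}$, $\pic{open_f_m-1_k}$, $\pic{open_f_m-1_k+1}$. For this I would merge adjacent copies of $f_{m-1}$ into one box with the fusion rule \eqref{E:f_f}, and use \eqref{E:t_f} to reduce any turnback that has come to rest directly on a copy of $f_\bullet$. The term in $\pic{open_f_m-1_k-1}$ should arise from the interaction of the cup--cap of the recursion with the nested turnback pattern when the inner turnback count drops, the term in $\pic{open_f_m-1_k+1}$ when it rises, and the term in $\pic{open_f_m-1_k}$ when the cup--cap closes off against a bead. Tracking the relative cyclic position of the bead, together with the signs coming from the pivotal structure of $\beadC$, correctly through all of these moves is the delicate part of the argument.

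Finally, collecting terms leaves three coefficients, each a ratio of products of quantum integers with denominator $[m]^2$, and it remains to verify that these agree with the ones in \eqref{E:open_f_m_k}. This is where the three-term relation \eqref{E:three_terms}, applied with arguments built out of $m$ and $k$, does the work --- for instance to check that the coefficient of $\pic{open_f_m-1_k-1}$ collapses to $-\frac{[m+1]}{[m]}+\frac{[m-k+1][m-k]}{[m]^2}$ --- while the coefficient of $\pic{open_f_m-1_k}$ alone retains a factor $[2]_{a+1}$, being the only contribution that comes from tracing a bead-carrying strand, every other quantum integer occurring at the plain variable $q$. I do not expect any conceptual obstacle: the proof uses only \eqref{E:f_m_def}, \eqref{E:f_f}, \eqref{E:t_f}, Lemma~\ref{L:trace_f_m}, the bead-sliding relations \eqref{E:bead_notation_1}--\eqref{E:bead_notation_3}, and \eqref{E:three_terms}; the real work --- and the only place an error is likely to slip in --- is the combinatorial bookkeeping of turnback counts, signs, and quantum-integer coefficients through the expansion.
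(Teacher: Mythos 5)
Your opening move --- expanding $f_m$ via the Wenzl recursion~\eqref{E:f_m_def} --- is the same as the paper's, and the closing move --- checking coefficients with the three-term identity~\eqref{E:three_terms} --- also appears there. But the middle of your argument has a genuine gap: you propose to reduce every diagram produced by a \emph{single} application of~\eqref{E:f_m_def} to the three standard level-$(m-1)$ shapes using only fusion~\eqref{E:f_f} and the turnback absorption~\eqref{E:t_f}. That cannot work for general $k$, because~\eqref{E:t_f} only evaluates turnbacks sitting directly against a Jones--Wenzl idempotent, whereas here the $k$ nested turnbacks are separated from the idempotent by the $K^a$-bead --- handling that interference is the entire content of the lemma. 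The tell-tale sign is in the coefficients themselves: factors such as $[m-k]^2/[m]^2$ and $[m-k+1][m-k]/[m]^2$ are telescoped products $\prod_{j}[m-j-1]^2/[m-j]^2$, i.e.\ they record $k$ successive applications of a level-lowering step, not one.

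The paper supplies the missing mechanism by arguing by induction on $m$. Writing the claimed expansion with undetermined coefficients $\alpha_{m,k}^{k-1},\alpha_{m,k}^{k},\alpha_{m,k}^{k+1}$, the expansion via~\eqref{E:f_m_def} reproduces, inside one of the resulting diagrams, the same ``open'' configuration at level $m-1$ with $k-1$ turnbacks; applying the induction hypothesis to it and reabsorbing the lower idempotents with~\eqref{E:f_f} yields the linear recursion $\alpha_{m,k}^{j}=\tfrac{[m-1]^2}{[m]^2}\alpha_{m-1,k-1}^{j-1}+(\text{explicit terms})$, whose closed-form solution is then verified with~\eqref{E:three_terms}. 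Your base cases ($m=1$, and $k=0$ for general $m$) and your observation that $[2]_{a+1}$ can only enter through the bead-carrying circle are both correct and match the paper; what you need to add is the inductive framework that turns the one-step expansion into a solvable two-variable recursion for the coefficients.
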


\begin{proof}
 Equation~\eqref{E:open_f_m_k} is proved by induction on $1 \leqs m \leqs r-1$. In order to do this, let us set
 \begin{align*}
  \pic{open_p_f_m_k} &= \alpha_{m,k}^{k-1} \cdot \pic{open_f_m-1_k-1} \\*
  &\hspace*{\parindent} + \alpha_{m,k}^k \cdot \pic{open_f_m-1_k} \\*
  &\hspace*{\parindent} + \alpha_{m,k}^{k+1} \cdot \pic{open_f_m-1_k+1}
 \end{align*}
 Remark that we have 
 \[
  \alpha_{m,0}^{-1} = \alpha_{m,m-1}^m = \alpha_{m,m}^m = \alpha_{m,m}^{m+1} = 0
 \]
 for every integer $1 \leqs m \leqs r-1$. If $m = 1$, we have
 \begin{align*}
  \pic{proof_open_f_1_0} &= -[2]_{a+1}, & \pic{proof_open_f_1_1} &= -[2].
 \end{align*}
 This gives the condition
 \begin{align*}
  \alpha_{1,0}^0 &= -[2]_{a+1}, & \alpha_{1,1}^0 &= -[2].
 \end{align*}
 If $1 < m \leqs r-1$ and $k = 0$, let us consider
 \[
  \pic{proof_open_f_m_0_1} + \frac{[m-1]}{[m]} \cdot \pic{proof_open_f_m_0_2}
 \]
 This gives the condition
 \begin{align*}
  \alpha_{m,0}^0 &= -[2]_{a+1}, & \alpha_{m,0}^1 &= \frac{[m-1]}{[m]}.
 \end{align*}
 If $1 < m \leqs r-1$ and $1 \leqs k \leqs m$, let us consider
 \begin{align*}
  &\phantom{+} \pic{proof_open_f_m_k_11} + \frac{[m-1]}{[m]} \cdot \pic{proof_open_f_m_k_12} \\*
  &+ \frac{[m-1]}{[m]} \cdot \pic{proof_open_f_m_k_21} + \frac{[m-1]^2}{[m]^2} \cdot \pic{proof_open_f_m_k_22}
 \end{align*}
 This gives the condition
 \begin{align*}
  \alpha_{m,k}^{k-1} &= -\frac{\{ m \}'}{[m]} + \frac{[m-1]^2}{[m]^2} \alpha_{m-1,k-1}^{k-2}, \\*
  \alpha_{m,k}^k &= \frac{[m-1]^2}{[m]^2} \alpha_{m-1,k-1}^{k-1}, \\*
  \alpha_{m,k}^{k+1} &= \frac{[m-1]^2}{[m]^2} \alpha_{m-1,k-1}^k.
 \end{align*}
 Thanks to equation~\eqref{E:three_terms}, the solution is 
 \begin{align*}
  \alpha_{m,k}^{k-1} &= - \frac{[m+1]}{[m]} + \frac{[m-k+1][m-k]}{[m]^2}, \\*
  \alpha_{m,k}^k &= - \frac{[2]_{a+1} [m-k]^2}{[m]^2}, \\*
  \alpha_{m,k}^{k+1} &= \frac{[m-k][m-k-1]}{[m]^2}. \qedhere
 \end{align*}
\end{proof}

\begin{lemma}\label{L:open_h_r} 
 We have
 \begin{align}
  \pic{open_p_g_r} &= -[2]_{a+1} \cdot \pic{open_f_r-1} \label{E:open_g_r} \\*
  \pic{open_p_h_r} &= \pic{open_f_f_r-1_1} \label{E:open_h_r} 
 \end{align}
 Furthermore, for every integer $1 \leqs k \leqs r$, we have
 \begin{align}
  \pic{open_p_g_g_r_k} &= -[2] \cdot \pic{open_f_f_r-1_k-1} \label{E:open_g_g_r_k} \\
  \pic{open_p_g_h_r_k} &= \pic{open_p_h_g_r_k} = \pic{open_f_f_r-1_k-1} \label{E:open_g_h_r_k} \\
  \pic{open_p_h_h_r_k} &= [k][k-1] \cdot \pic{open_f_f_r-1_k-1} \nonumber \\*
  &\hspace*{\parindent} -[2]_{a+1}[k]^2 \cdot \pic{open_f_f_r-1_k} \nonumber \\* 
  &\hspace*{\parindent} + [k+1][k] \cdot \pic{open_f_f_r-1_k+1} \label{E:open_h_h_r_k}
 \end{align}
\end{lemma}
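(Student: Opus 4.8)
The plan is to prove all six identities by direct computation in the bead category, reducing everything to the single‑idempotent partial‑trace formula of Lemma~\ref{L:open_f_m} together with the basic bead rules \eqref{E:bead_notation_1}--\eqref{E:bead_notation_3}, the defining relations \eqref{E:g_r_def}--\eqref{E:g_m_def} and \eqref{E:h_def}, and the absorption/composition relations \eqref{E:f_f}, \eqref{E:g_f}, \eqref{E:g_h}. The coefficient simplifications will, as in the proof of Lemma~\ref{L:open_f_m}, be carried out using the three‑term identity \eqref{E:three_terms}. No new idea beyond those already used for Lemmas~\ref{L:open_f_m}, \ref{L:trace_f_m}, and \ref{L:trace_g_m_h_m} should be needed.

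First I would treat the two base identities \eqref{E:open_g_r} and \eqref{E:open_h_r}. For \eqref{E:open_g_r} one substitutes the definition \eqref{E:g_r_def} of $g_r$, slides the $K$‑bead across the resulting diagram via \eqref{E:bead_notation_1}--\eqref{E:bead_notation_3}, and collapses the small closed strand into the scalar $-[2]_{a+1}$; the surviving diagram is absorbed into $f_{r-1}$ by \eqref{E:f_f}. For \eqref{E:open_h_r} one argues the same way starting from \eqref{E:h_def}, the point being that the prefactor $(-1)^{r+1}[r+1]$ combines with the value of the closed $f_{r-1}$‑coloured loop created by the partial trace — supplied by Lemma~\ref{L:trace_f_m} — to leave precisely the pair of $f_{r-1}$'s on the right‑hand side. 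As a consistency check one may note that at $a=0$ the bead is the pivotal element and \eqref{E:open_g_r} recovers \eqref{E:t_g_k_middle} for $m=r$, using $q^r=1$.

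For the three ``$k$ through‑strands'' identities \eqref{E:open_g_g_r_k}, \eqref{E:open_g_h_r_k}, \eqref{E:open_h_h_r_k} I would induct on $k$, the case $k=1$ being \eqref{E:open_g_r} and \eqref{E:open_h_r} together with the full‑trace formulas of Lemma~\ref{L:trace_g_m_h_m}. In the inductive step one peels off a single strand: applying Lemma~\ref{L:open_f_m} to the copy of $f_{r-1}$ that appears expresses a depth‑$k$ partial trace of a product of two of the idempotents $g_r,h_r$ as a linear combination of depth‑$(k\pm1)$ partial traces, with the very coefficients $\alpha_{m,k}^{\bullet}$ of \eqref{E:open_f_m_k}. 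The composite of the two non‑semisimple idempotents is then collapsed using \eqref{E:g_f} and \eqref{E:g_h}: the $g,g$ and $g,h$ composites reduce to $g$‑ and $h$‑type diagrams, while the $h,h$ composite vanishes in the relevant range, which is why \eqref{E:open_h_h_r_k} ends up being governed purely by the $f$‑computation and reproduces the $k-1,k,k+1$ pattern of \eqref{E:open_f_m_k}. Finally, \eqref{E:three_terms} is invoked to put the resulting coefficients into the stated closed form.

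I expect the main obstacle to be purely organizational: keeping track of the several diagrams and their scalar coefficients through each bead‑slide, and verifying that the recursions for the coefficients telescope via \eqref{E:three_terms}. In particular the delicate point is confirming that the $a$‑dependence cancels entirely in \eqref{E:open_g_g_r_k} and \eqref{E:open_g_h_r_k} — where only $[2]$, not $[2]_{a+1}$, survives — while it persists through the $f$‑type middle term of \eqref{E:open_h_h_r_k}; this cancellation should fall out of the interplay between the $[2]_{a+1}$‑term of Lemma~\ref{L:open_f_m} and the $a$‑free coefficients coming from the $g_r$, $h_r$ recursions.
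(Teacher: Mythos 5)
Your high-level toolbox is the right one — expand via the defining relations \eqref{E:g_r_def}, \eqref{E:h_def}, slide $K^a$-beads with \eqref{E:bead_notation_1}--\eqref{E:bead_notation_3}, absorb via \eqref{E:f_f}, \eqref{E:g_f}, \eqref{E:g_h}, and invoke \eqref{E:open_f_m_k} for the hardest identity — and the paper does nothing more than that (it dismisses \eqref{E:open_g_r}--\eqref{E:open_g_h_r_k} as ``easy'' and says \eqref{E:open_h_h_r_k} ``follows from \eqref{E:open_f_m_k}''). However, your scaffolding of the argument as an induction on $k$ does not fit the structure of \eqref{E:open_f_m_k}: that lemma is a recursion in $m$ (it reduces $f_m$ to $f_{m-1}$), and the $k\to k\pm1$ shift it produces comes bundled with $m\to m-1$, so it is not a self-contained $k$-recursion; the paper applies it once (effectively with $m=r-1$ and an index shift $k\to k-1$, as one can check the coefficients match under this substitution), not via an induction over $k$.

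Two specific claims in your sketch are incorrect. First, for \eqref{E:open_h_r} you say the prefactor $(-1)^{r+1}[r+1]$ ``combines with the value of the closed $f_{r-1}$-coloured loop'': since $r$ is odd and $[r+1]=1$ at a primitive $r$th root of unity, this prefactor is $1$ and there is nothing to combine; and if a closed $K^a$-beaded $f_{r-1}$-loop really appeared, Lemma~\ref{L:trace_f_m} would give a factor $(-1)^{r-1}[r]_{a+1}$, which vanishes for generic $a$, contradicting the nonzero RHS. Second, the assertion that ``the $h,h$ composite vanishes in the relevant range'' as an explanation for \eqref{E:open_h_h_r_k} is misleading: $h_r\circ h_r=0$ in $\TL$ (this is the third identity following \eqref{E:g_h}), so if the picture were a bare composite the whole LHS would be zero, not the nontrivial $[k][k-1]$, $-[2]_{a+1}[k]^2$, $[k+1][k]$ combination. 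The entire content of \eqref{E:open_h_h_r_k} is that the $K^a$ bead sits between the two copies of $h_r$ along the traced strands and thereby obstructs the nilpotence; your sketch does not engage with this, and it is precisely why the $f$-recursion \eqref{E:open_f_m_k} has to be invoked there and nowhere else in the lemma. Finally, ``the case $k=1$ being \eqref{E:open_g_r}'' is not right either: \eqref{E:open_g_r}'s coefficient is $-[2]_{a+1}$ while \eqref{E:open_g_g_r_k}'s is the $a$-free $-[2]$, and the RHS pictures (`open\_f\_r-1' vs.\ `open\_f\_f\_r-1\_0') differ, so the one is not a base case of the other.
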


\begin{proof}
 Equations~\eqref{E:open_g_r}-\eqref{E:open_g_h_r_k} are easy to prove, and left to the reader, while equation~\eqref{E:open_h_h_r_k} follows from equation~\eqref{E:open_f_m_k}.
\end{proof}

\begin{lemma}\label{L:open_g_m}
 For all integers $r+1 \leqs m \leqs 2r-2$, if $0 \leqs k \leqs m-r$ we have
 \begin{align}
  &\pic{open_p_g_g_m_k} \nonumber \\*
  &\hspace*{0.5\parindent} = \left( - \frac{[m+1]}{[m]} + \frac{[m-k+1][m-k]}{[m]^2} \right) \cdot \pic{open_g_g_m-1_k-1} \nonumber \\*
  &\hspace*{\parindent} - \frac{2[k][m-k+1]}{[m]^3} \cdot \left( \pic{open_g_h_m-1_k-1} + \pic{open_h_g_m-1_k-1} \right) \nonumber \\*
  &\hspace*{\parindent} + \frac{4[k][k-1]}{[m]^4} \cdot \pic{open_h_h_m-1_k-1} \nonumber \\
  &\hspace*{\parindent} - \frac{[2]_{a+1}[m-k]^2}{[m]^2} \cdot \pic{open_g_g_m-1_k} \nonumber \\*
  &\hspace*{\parindent} + \frac{2[2]_{a+1}[k][m-k]}{[m]^3} \cdot \left( \pic{open_g_h_m-1_k} + \pic{open_h_g_m-1_k} \right) \nonumber \\*
  &\hspace*{\parindent} + \left( \frac{3[2]_{a+1}}{[m]^2}\delta_{k,m-r} - \frac{4[2]_{a+1}[k]^2}{[m]^4} \right) \cdot \pic{open_h_h_m-1_k} \nonumber \\
  &\hspace*{\parindent} + \frac{[m-k][m-k-1]}{[m]^2} \cdot \pic{open_g_g_m-1_k+1} \nonumber \\*
  &\hspace*{\parindent} + \left( -\frac{1}{[m]^2} - \frac{2[k][m-k-1]}{[m]^3} \right) \cdot \left( \pic{open_g_h_m-1_k+1} + \pic{open_h_g_m-1_k+1} \right) \nonumber \\*
  &\hspace*{\parindent} + \left( -\frac{[2]}{[m]^2}\delta_{k,m-r-1} - \frac{2[2]}{[m]^2}\delta_{k,m-r} + \frac{4[k+1][k]}{[m]^4} \right) \cdot \pic{open_h_h_m-1_k+1} \label{E:open_g_g_m_k_low}
 \end{align}
 and if $m-r+1 \leqs k \leqs m$ we have
 \begin{align}
  &\pic{open_p_g_g_m_k} \nonumber \\*
  &\hspace*{0.5\parindent} = \left( - \frac{[m+1]}{[m]} + \frac{[m-k+1][m-k]}{[m]^2} \right) \cdot \pic{open_g_g_m-1_k-1} \nonumber \\*
  &\hspace*{\parindent} + \left( -\frac{1}{[m]^2} + \frac{\{ m \}'[m-k+1][m-k]}{[m]^3} \right)\cdot \left( \pic{open_g_h_m-1_k-1} \hspace*{-1.25pt} + \hspace*{-1.25pt} \pic{open_h_g_m-1_k-1} \right) \nonumber \\*
  &\hspace*{\parindent} + \left( \frac{[2]}{[m]^2} + \frac{(\{ m \}')^2 [m-k+1][m-k]}{[m]^4} \right) \cdot \pic{open_h_h_m-1_k-1} \nonumber \\
  &\hspace*{\parindent} - \frac{[2]_{a+1}[m-k]^2}{[m]^2} \cdot \pic{open_g_g_m-1_k} \nonumber \\*
  &\hspace*{\parindent} - \frac{[2]_{a+1} \{ m \}' [m-k]^2}{[m]^3} \cdot \left( \pic{open_g_h_m-1_k} + \pic{open_h_g_m-1_k} \right) \nonumber \\*
  &\hspace*{\parindent} - \frac{[2]_{a+1}(\{ m \}')^2[m-k]^2}{[m]^4} \cdot \pic{open_h_h_m-1_k} \nonumber \\
  &\hspace*{\parindent} + \frac{[m-k][m-k-1]}{[m]^2} \cdot \pic{open_g_g_m-1_k+1} \nonumber \\*
  &\hspace*{\parindent} + \frac{\{ m \}'[m-k][m-k-1]}{[m]^3} \cdot \left( \pic{open_g_h_m-1_k+1} + \pic{open_h_g_m-1_k+1} \right) \nonumber \\*
  &\hspace*{\parindent} + \frac{(\{ m \}')^2[m-k][m-k-1]}{[m]^4} \cdot \pic{open_h_h_m-1_k+1} \label{E:open_g_g_m_k_high}
 \end{align}
\end{lemma}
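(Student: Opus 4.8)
The plan is to prove \eqref{E:open_g_g_m_k_low} and \eqref{E:open_g_g_m_k_high} together by induction on $m$ in the range $r+1 \leqs m \leqs 2r-2$, following the same scheme used for Lemma~\ref{L:open_f_m}. The induction is well founded because both right-hand sides involve only diagrams at level $m-1 \geqs r$, and the values of the level-$r$ diagrams that occur when $m = r+1$ — those built from $g_r$, $h_r$, and their pairwise combinations, all ultimately reduced to products of simple Jones--Wenzl idempotents — are exactly the content of Lemma~\ref{L:open_h_r}; so the case $m=r+1$ needs no separate treatment. The only structural inputs are the recursive definition \eqref{E:g_m_def} of $g_m$, the definition \eqref{E:h_def} of $h_m$, the bead-sliding conventions \eqref{E:bead_notation_1}--\eqref{E:bead_notation_3}, and the turnback relations \eqref{E:t_g_k_low}--\eqref{E:t_g_h_k_high}.

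First I would substitute \eqref{E:g_m_def} and \eqref{E:h_def} into the left-hand side, rewriting the level-$m$ partial trace as a $\C$-linear combination of level-$(m-1)$ diagrams: each summand of \eqref{E:g_m_def} attaches a single Temperley--Lieb cap--cup, and after sliding the $K^a$-bead past the new extrema and cabled idempotents via \eqref{E:bead_notation_1}--\eqref{E:bead_notation_3} one is left with diagrams of the three basic types ($g\!\cdot\!g$, $g\!\cdot\!h$, and $h\!\cdot\!h$) at level $m-1$, with index shifted by one, together with turnback pieces. The bead sliding is precisely what produces the quantum integers deformed by $q \mapsto q^{a+1}$ (the $[2]_{a+1}$ factors), while the remaining coefficients are ordinary quantum integers. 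Introducing unknown scalars for each of the nine coefficient slots (three types times the shifts $k-1,k,k+1$), exactly as the scalars $\alpha_{m,k}^{k\pm1}$ in the proof of Lemma~\ref{L:open_f_m}, one obtains a system of coupled linear recursions expressing each level-$m$ coefficient in terms of the level-$(m-1)$ ones; the coupling between the three types is exactly the effect of the $-\tfrac{2}{[m]^2}h_m$ summand of \eqref{E:g_m_def} together with \eqref{E:h_def}. I would then verify by direct substitution that the closed forms displayed in \eqref{E:open_g_g_m_k_low}--\eqref{E:open_g_g_m_k_high} solve this system. As in Lemma~\ref{L:open_f_m}, the only nontrivial algebraic identity required is the three-term relation \eqref{E:three_terms}, supplemented only by elementary identities such as $\{m\}'[k] = [m+k]-[m-k]$ and $[m]\{1\}' = [m+1]+[m-1]$ to collapse the $\{m\}'$-weighted terms appearing in the high-$k$ case.

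The dichotomy $0 \leqs k \leqs m-r$ versus $m-r+1 \leqs k \leqs m$, and in particular the Kronecker-delta terms $\delta_{k,m-r}$ and $\delta_{k,m-r-1}$ in \eqref{E:open_g_g_m_k_low}, is forced by the change of regime in the turnback formulas: closing off strands of a level-$(m-1)$ idempotent or of $h_{m-1}$ obeys the generic relations \eqref{E:t_g_k_low}--\eqref{E:t_h_k_low} as long as the number of capped strands stays below $m-1-r$, follows the exceptional relations \eqref{E:t_g_k_middle}--\eqref{E:t_h_k_middle} at the threshold, and vanishes beyond it by \eqref{E:t_g_h_k_high}. The delta-supported terms in \eqref{E:open_g_g_m_k_low} are exactly the discrepancy between the generic and the exceptional (resp.\ vanishing) turnbacks, carried through one step of the recursion; no such terms survive in \eqref{E:open_g_g_m_k_high} because there the capped-strand count is already past the threshold.

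I expect the main obstacle to be purely combinatorial rather than conceptual: keeping the nine coupled coefficient recursions organized, correctly tracking which of the three basic diagram types a capped $h$ produces near the boundary indices $k \in \{m-r-1,m-r,m-r+1\}$, and checking that the two closed-form answers — which look structurally different in the low-$k$ and high-$k$ ranges — agree at the seam $k=m-r$, so that the two formulas are jointly consistent with the level-$(m-1)$ hypothesis. Beyond this bookkeeping, the argument is a routine, if lengthy, induction of exactly the same kind as Lemma~\ref{L:open_f_m}.
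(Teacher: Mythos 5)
Your overall strategy coincides with the paper's: introduce nine unknown coefficients (the three diagram types $g\cdot g$, $g\cdot h + h\cdot g$, $h\cdot h$ times the three shifts $k-1,k,k+1$), derive from the defining recursion of $g_m$ a coupled linear system expressing the level-$m$ coefficients in terms of the level-$(m-1)$ ones, and solve it using the three-term identity \eqref{E:three_terms}, checking consistency at the seam $k=m-r$. For $r+2 \leqs m \leqs 2r-2$ this is exactly the paper's inductive step.

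The gap is in the base case. You assert that ``the case $m=r+1$ needs no separate treatment'' and then propose to substitute \eqref{E:g_m_def} into the left-hand side uniformly. But \eqref{E:g_m_def} defines $g_m$ only for $r+2 \leqs m \leqs 2r-2$: the idempotent $g_{r+1}$ is given by the separate four-term formula \eqref{E:g_r+1_def}, precisely because the general recursion degenerates at $m=r+1$ (there $[m-1]=[r]=0$, so the middle term of \eqref{E:g_m_def} vanishes and the formula no longer produces $g_{r+1}$). The fact that the right-hand sides of \eqref{E:open_g_g_m_k_low}--\eqref{E:open_g_g_m_k_high} at $m=r+1$ involve only level-$r$ diagrams covered by Lemma~\ref{L:open_h_r} does not start the induction: to prove the identity at $m=r+1$ you must expand the \emph{left-hand side}, and the only available expansion is \eqref{E:g_r+1_def}. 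The paper accordingly treats $m=r+1$ as an explicit base case (separately for $k=0$ and for $1 \leqs k \leqs r+1$), expanding all sixteen cross-terms of the four summands of \eqref{E:g_r+1_def} and reducing them via Lemma~\ref{L:open_h_r}. As written, your induction has no valid base, and the substitution you describe would produce wrong coefficients at $m=r+1$. A secondary, non-fatal remark: the Kronecker deltas in \eqref{E:open_g_g_m_k_low} are not imported from the turnback relations \eqref{E:t_g_k_low}--\eqref{E:t_g_h_k_high}, which the paper's proof of this lemma does not invoke; they fall out of solving the coefficient recursion across the boundary $k=m-r$, where the level-$(m-1)$ coefficients entering the three shifted slots must be taken from different regimes.
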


\begin{proof}
 Equation~\eqref{E:open_g_g_m_k_low} and \eqref{E:open_g_g_m_k_high} are proved by induction on $r+2 \leqs m \leqs 2r-2$. In order to do this, let us set
 \begin{align*}
  \pic{open_p_g_g_m_k} &= \alpha_{m,k}^{k-1} \cdot \pic{open_g_g_m-1_k-1} \\*
  &+ \beta_{m,k}^{k-1} \cdot \left( \pic{open_g_h_m-1_k-1} + \pic{open_h_g_m-1_k-1} \right) \\*
  &+ \gamma_{m,k}^{k-1} \cdot \pic{open_h_h_m-1_k-1} \\
  &+ \alpha_{m,k}^k \cdot \pic{open_g_g_m-1_k} \\*
  &+ \beta_{m,k}^k \cdot \left( \pic{open_g_h_m-1_k} + \pic{open_h_g_m-1_k} \right) \\*
  &+ \gamma_{m,k}^k \cdot \pic{open_h_h_m-1_k} \\
  &+ \alpha_{m,k}^{k+1} \cdot \pic{open_g_g_m-1_k+1} \\*
  &+ \beta_{m,k}^{k+1} \cdot \left( \pic{open_g_h_m-1_k+1} + \pic{open_h_g_m-1_k+1} \right) \\*
  &+ \gamma_{m,k}^{k+1} \cdot \pic{open_h_h_m-1_k+1}
 \end{align*}
 Remark that we have 
 \begin{align*}
  \alpha_{m,0}^{-1} = \beta_{m,0}^{-1} = \gamma_{m,0}^{-1} &= 0, \\*
  \alpha_{m,m-1}^m = \beta_{m,m-1}^m = \gamma_{m,m-1}^m &= 0, \\*
  \alpha_{m,m}^m = \beta_{m,m}^m = \gamma_{m,m}^m &= 0, \\*
  \alpha_{m,m}^{m+1} = \beta_{m,m}^{m+1} = \gamma_{m,m}^{m+1} &= 0
 \end{align*}
 for every integer $r+1 \leqs m \leqs 2r-2$. If $m = r+1$ and $k = 0$, let us consider
 \begin{align*}
  &\pic{proof_open_g_r+1_1} - \pic{proof_open_g_r+1_2} - \pic{proof_open_g_r+1_3} - [2] \cdot \pic{proof_open_g_r+1_4}
 \end{align*}
 This gives the condition
 \begin{align*}
  \alpha_{r+1,0}^0 &= -[2]_{a+1}, & \beta_{r+1,0}^0 &= 0, & \gamma_{r+1,0}^0 &= 0, \\*
  \alpha_{r+1,0}^1 &= 0, & \beta_{r+1,0}^1 &= -1, & \gamma_{r+1,0}^1 &= -[2].
 \end{align*}
 If $m = r+1$ and $1 \leqs k \leqs r+1$, let us consider
 \begin{align*}
  &\phantom{+} \pic{proof_open_g_g_r+1_k_11} - \pic{proof_open_g_g_r+1_k_12} - \pic{proof_open_g_g_r+1_k_13} - [2] \cdot \pic{proof_open_g_g_r+1_k_14} \\*
  &- \pic{proof_open_g_g_r+1_k_21} + \pic{proof_open_g_g_r+1_k_22} + \pic{proof_open_g_g_r+1_k_23} + [2] \cdot \pic{proof_open_g_g_r+1_k_24} \\*
  &- \pic{proof_open_g_g_r+1_k_31} + \pic{proof_open_g_g_r+1_k_32} + \pic{proof_open_g_g_r+1_k_33} + [2] \cdot \pic{proof_open_g_g_r+1_k_34} \\*
  &- [2] \cdot \pic{proof_open_g_g_r+1_k_41} + [2] \cdot \pic{proof_open_g_g_r+1_k_42} + [2] \cdot \pic{proof_open_g_g_r+1_k_43} + [2]^2 \cdot \pic{proof_open_g_g_r+1_k_44}
 \end{align*}
 When $k = 1$, this gives the condition
 \begin{align*}
  \alpha_{r+1,1}^0 &= - \{ 1 \}', & \beta_{r+1,1}^0 &= -2, & \gamma_{r+1,1}^0 &= 0, \\*
  \alpha_{r+1,1}^1 &= 0, & \beta_{r+1,1}^1 &= 0, & \gamma_{r+1,1}^1 &= -[2]_{a+1}, \\*
  \alpha_{r+1,1}^2 &= 0, & \beta_{r+1,1}^2 &= 1, & \gamma_{r+1,1}^2 &= 2[2],
 \end{align*}
 thanks to equations~\eqref{E:open_g_r} and \eqref{E:open_h_r}, and when $2 \leqs k \leqs r+1$, this gives the condition
 \begin{align*}
  \alpha_{r+1,k}^{k-1} &= -[2]+[k-1][k-2], \\*
  \beta_{r+1,k}^{k-1} &= -1 + \{ 1 \}'[k-1][k-2], \\*
  \gamma_{r+1,k}^{k-1} &= [2] + (\{ 1 \}')^2[k-1][k-2], \\*
  \alpha_{r+1,k}^k &= - [2]_{a+1} [k-1]^2, \\*
  \beta_{r+1,k}^k &= - [2]_{a+1}\{ 1 \}'[k-1]^2, \\*
  \gamma_{r+1,k}^k &= - [2]_{a+1}(\{ 1 \}')^2[k-1]^2, \\*
  \alpha_{r+1,k}^{k+1} &= [k][k-1], \\*
  \beta_{r+1,k}^{k+1} &= \{ 1 \}'[k][k-1], \\*
  \gamma_{r+1,k}^{k+1} &= (\{ 1 \}')^2[k][k-1],
 \end{align*}
 thanks to equations~\eqref{E:open_g_g_r_k}, \eqref{E:open_g_h_r_k}, and \eqref{E:open_h_h_r_k}. If $r+2 \leqs m \leqs 2r-2$ and $k = 0$, let us consider
  \begin{align*}
  &\pic{proof_open_g_m_1} + \frac{[m-1]}{[m]} \cdot \pic{proof_open_g_m_2} - \frac{2}{[m]^2} \cdot \pic{proof_open_g_m_3}
 \end{align*}
 This gives the condition
 \begin{align*}
  \alpha_{m,0}^0 &= -[2]_{a+1}, & \beta_{m,0}^0 &= 0, & \gamma_{m,0}^0 &= 0, \\*
  \alpha_{m,0}^1 &= \frac{[m-1]}{[m]}, & \beta_{m,0}^1 &= -\frac{1}{[m]^2}, & \gamma_{m,0}^1 &= 0.
 \end{align*}
  If $r+2 \leqs m \leqs 2r-2$ and $1 \leqs k \leqs m$, let us consider
 \begin{align*}
  &\phantom{+} \pic[-20]{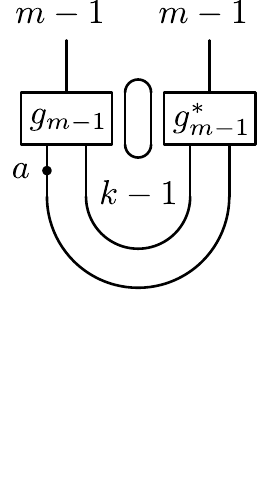} + \frac{[m-1]}{[m]} \cdot \pic[-20]{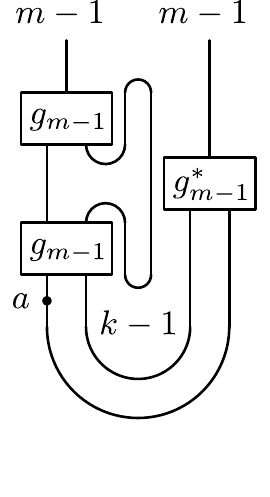} - \frac{2}{[m]^2} \cdot \pic[-20]{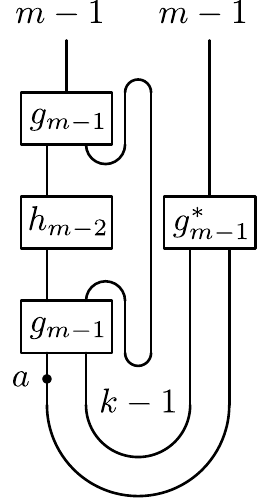} \\*
  &+ \frac{[m-1]}{[m]} \cdot \hspace*{-5pt} \pic[-20]{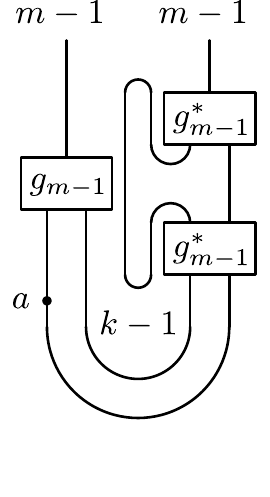} \hspace*{-10pt} + \frac{[m-1]^2}{[m]^2} \cdot \hspace*{-10pt} \pic[-20]{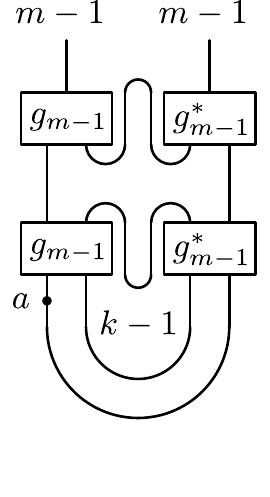} \hspace*{-10pt} - \frac{2[m-1]}{[m]^3} \cdot \hspace*{-10pt}  \pic[-20]{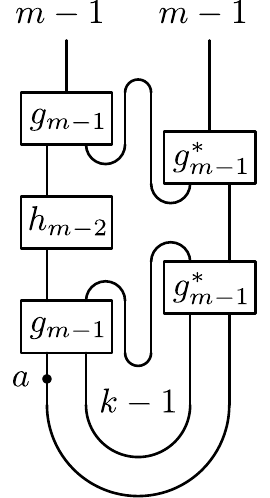} \\*
  &- \frac{2}{[m]^2} \cdot \hspace*{-5pt} \pic[-20]{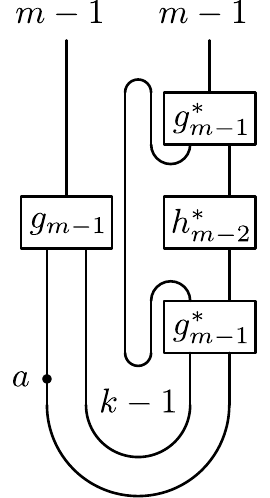} \hspace*{-5pt} - \frac{2[m-1]}{[m]^3} \cdot \pic[-20]{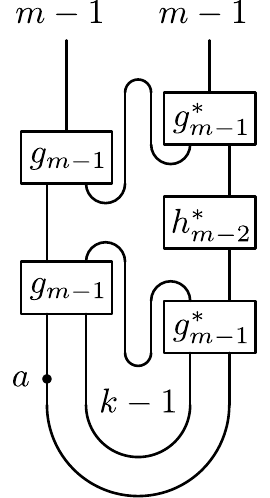} \hspace*{-5pt} + \frac{4}{[m]^4} \cdot \hspace*{-5pt} \pic[-20]{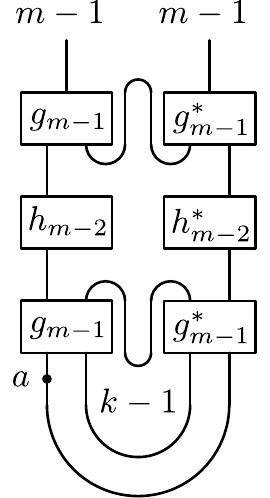}
 \end{align*}
 This gives the condition
 \begin{align*}
  \alpha_{m,k}^{k-1} &= -\frac{\{ m \}'}{[m]} + \frac{[m-1]^2}{[m]^2} \alpha_{m-1,k-1}^{k-2}, \\*
  \beta_{m,k}^{k-1} &= - \frac{2}{[m]^2} + \frac{[m-1]^2}{[m]^2} \beta_{m-1,k-1}^{k-2} - \frac{2[m-1]}{[m]^3} \alpha_{m-1,k-1}^{k-2}, \\*
  \gamma_{m,k}^{k-1} &= \frac{[m-1]^2}{[m]^2} \gamma_{m-1,k-1}^{k-2} - \frac{4[m-1]}{[m]^3} \beta_{m-1,k-1}^{k-2} + \frac{4}{[m]^4} \alpha_{m-1,k-1}^{k-2}, \\*
  \alpha_{m,k}^k &= \frac{[m-1]^2}{[m]^2} \alpha_{m-1,k-1}^{k-1}, \\*
  \beta_{m,k}^k &= \frac{[m-1]^2}{[m]^2} \beta_{m-1,k-1}^{k-1} - \frac{2[m-1]}{[m]^3} \alpha_{m-1,k-1}^{k-1}, \\*
  \gamma_{m,k}^k &= \frac{[m-1]^2}{[m]^2} \gamma_{m-1,k-1}^{k-1} - \frac{4[m-1]}{[m]^3} \beta_{m-1,k-1}^{k-1} + \frac{4}{[m]^4} \alpha_{m-1,k-1}^{k-1}, \\*
  \alpha_{m,k}^{k+1} &= \frac{[m-1]^2}{[m]^2} \alpha_{m-1,k-1}^k, \\*
  \beta_{m,k}^{k+1} &= \frac{[m-1]^2}{[m]^2} \beta_{m-1,k-1}^k - \frac{2[m-1]}{[m]^3} \alpha_{m-1,k-1}^k, \\*
  \gamma_{m,k}^{k+1} &= \frac{[m-1]^2}{[m]^2} \gamma_{m-1,k-1}^k - \frac{4[m-1]}{[m]^3} \beta_{m-1,k-1}^k + \frac{4}{[m]^4} \alpha_{m-1,k-1}^k.
 \end{align*}
 Thanks to equation~\eqref{E:three_terms}, when $1 \leqs k \leqs m-r$ the solution is 
 \begin{align*}
  \alpha_{m,k}^{k-1} &= - \frac{[m+1]}{[m]} + \frac{[m-k+1][m-k]}{[m]^2}, \\*
  \beta_{m,k}^{k-1} &= - \frac{2[k][m-k+1]}{[m]^3}, \\*
  \gamma_{m,k}^{k-1} &= \frac{4[k][k-1]}{[m]^4}, \\*
  \alpha_{m,k}^k &= -\frac{[2]_{a+1}[m-k]^2}{[m]^2}, \\*
  \beta_{m,k}^k &= \frac{2[2]_{a+1}[k][m-k]}{[m]^3}, \\*
  \gamma_{m,k}^k &= \frac{3[2]_{a+1}}{[m]^2}\delta_{k,m-r} - \frac{4[2]_{a+1}[k]^2}{[m]^4}, \\*
  \alpha_{m,k}^{k+1} &= \frac{[m-k][m-k-1]}{[m]^2}, \\*
  \beta_{m,k}^{k+1} &= -\frac{1}{[m]^2} - \frac{2[k][m-k-1]}{[m]^3}, \\*
  \gamma_{m,k}^{k+1} &= - \frac{[2]}{[m]^2} \delta_{k,m-r-1} - \frac{2[2]}{[m]^2} \delta_{k,m-r} + \frac{4[k+1][k]}{[m]^4},
 \end{align*}
 and when $m-r+1 \leqs k \leqs m$ the solution is
 \begin{align*}
  \alpha_{m,k}^{k-1} &= - \frac{[m+1]}{[m]} + \frac{[m-k+1][m-k]}{[m]^2}, \\*
  \beta_{m,k}^{k-1} &= - \frac{1}{[m]^2} + \frac{\{ m \}'[m-k+1][m-k]}{[m]^3}, \\*  
  \gamma_{m,k}^{k-1} &= \frac{[2]}{[m]^2} + \frac{(\{ m \}')^2 [m-k+1][m-k]}{[m]^4}, \\*
  \alpha_{m,k}^k &= -\frac{[2]_{a+1}[m-k]^2}{[m]^2}, \\*
  \beta_{m,k}^k &= -\frac{[2]_{a+1} \{ m \}' [m-k]^2}{[m]^3}, \\*
  \gamma_{m,k}^k &= - \frac{[2]_{a+1}(\{ m \}')^2[m-k]^2}{[m]^4}, \\*
  \alpha_{m,k}^{k+1} &= \frac{[m-k][m-k-1]}{[m]^2}, \\*
  \beta_{m,k}^{k+1} &= \frac{\{ m \}'[m-k][m-k-1]}{[m]^3}, \\*
  \gamma_{m,k}^{k+1} &= \frac{(\{ m \}')^2[m-k][m-k-1]}{[m]^4}. \qedhere
 \end{align*}
\end{proof}

\subsection{Pseudo traces}\label{SS:pseudo-traces}

Finally, let us move on to the most complicated case, and let us start by remarking that, if $1 \leqs k \leqs r-1$, then we have
\begin{align}\label{E:a_0_2r-2_k}
 \pic{a_0_2r-2_k} &= - \frac{[r-k]_{a+1}}{[k]} \cdot \pic{a_0_2r-2_r-1}
\end{align}

\begin{lemma}\label{L:int_h_h_m_k}
 For all integers $r \leqs m \leqs 2r-2$ and $0 \leqs k \leqs m$ we have
 \begin{align}
  \pic{p_h_h_m_k} &= \delta_{k,m-r+1} (-1)^{m+1} [r]_{a+1} [m+1] \cdot \pic{a_0_2r-2_r-1} \label{E:int_h_h_m_k}
 \end{align}
\end{lemma}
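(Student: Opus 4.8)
The plan is to prove equation~\eqref{E:int_h_h_m_k} by induction on $r \leqs m \leqs 2r-2$, mirroring the strategy used in the proofs of Lemmas~\ref{L:trace_g_m_h_m}, \ref{L:open_f_m}, and~\ref{L:open_g_m}: unfold the recursive definition~\eqref{E:h_def} together with~\eqref{E:g_r_def}-\eqref{E:g_m_def} one step, push the $K^a$-bead through using~\eqref{E:bead_notation_1}-\eqref{E:bead_notation_3}, and collect the resulting terms. The nilpotency relation $h_m \circ h_m = 0$ from~\eqref{E:g_h} (together with $h_m^* = h_m$) will force almost all contributions to vanish, so the left-hand side of~\eqref{E:int_h_h_m_k} should collapse to a single surviving term. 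That term will be the one where the two $h$-insertions, after partial-tracing, leave behind a copy of $f_{r-1}$ sitting at the unique position $k = m-r+1$ predicted by equations~\eqref{E:t_h_k_low}, \eqref{E:t_h_k_middle}, and~\eqref{E:t_g_h_k_high}, which describe exactly how $h_m$ behaves under the relevant turn-backs.

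\textbf{Key steps.} First I would record the base case $m = r$: here $0 \leqs k \leqs r$, and the only value of $k$ with $k = m-r+1 = 1$ should be checked directly. Using~\eqref{E:h_def} with $m = r$ (so $h_r = (-1)^{r+1}[r+1] \cdot (\text{cap-cup on the first two strands composed with } f_{r-1})$, schematically), the composite $h_r \circ h_r$ with the $k$-th turn-back inserted reduces, via the bead-sliding rules and~\eqref{E:a_0_2r-2_k}, to a scalar multiple of the basic picture $\pic{a_0_2r-2_r-1}$; all other $k$ give $0$ because the two $f_{r-1}$'s cannot be connected without creating a closed $f_{r-1}$-loop of the wrong colour, or because the $h$-$h$ composite is killed by~\eqref{E:g_h}. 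The scalar should come out to $(-1)^{r+1}[r]_{a+1}[r+1]$, using Lemma~\ref{L:trace_f_m} (with the trace of $f_{r-1}$ contributing $(-1)^{r-1}[r]_{a+1}$) and the $(-1)^{r+1}[r+1]$ from~\eqref{E:h_def}; the signs reconcile since $(-1)^{r-1} = (-1)^{r+1}$. For the inductive step $r+1 \leqs m \leqs 2r-2$, I would substitute the recursion~\eqref{E:g_m_def}/\eqref{E:h_def} for $h_m$ (note $h_m$ is built from $h_{m-1}$ via the $h$-$h$ and $g$-$h$ absorption relations~\eqref{E:g_h}), slide the bead, and invoke~\eqref{E:open_h_h_r_k} and the relevant parts of Lemma~\ref{L:open_g_m} to rewrite partial traces of $h$-insertions in terms of $h$-insertions at level $m-1$; the Kronecker delta $\delta_{k,m-r+1}$ then propagates correctly because $m-r+1 = (m-1)-r+1+1$, matching the index shift in the recursion.

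\textbf{Main obstacle.} The delicate point will be bookkeeping the signs and the $q$-integer factors through the recursion: the factor $(-1)^{m+1}[r]_{a+1}[m+1]$ must emerge exactly, and the recursive coefficients in~\eqref{E:g_m_def} (the $\frac{[m-1]}{[m]}$ and $-\frac{2}{[m]^2}$ terms) interact with the $[m+1]$, $[m-r+1]$, etc.\ weights from~\eqref{E:t_h_k_low}-\eqref{E:t_g_h_k_high} in a way that only simplifies after repeated use of the three-term identity~\eqref{E:three_terms}. I expect the cleanest route is to reduce, as early as possible, to the fact that $\pic{p_h_h_m_k}$ equals $\ptr$ of $h_m$ composed with the $k$-th turn-back of $h_m$, then apply~\eqref{E:t_h_k_low}, \eqref{E:t_h_k_middle}, and~\eqref{E:t_g_h_k_high} directly: only $k = m-r+1$ survives (the other ranges give $0$), and in that case~\eqref{E:t_h_k_middle} produces $(-1)^{m+1}[m+1] \cdot f_{r-1}$, whose bead-decorated trace is $[r]_{a+1}$ times the basic diagram by Lemma~\ref{L:trace_f_m} (after accounting for the sign $(-1)^{r-1} = (-1)^{r+1}$). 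This shortcut avoids the induction entirely and is the proof I would ultimately write down; the induction is the fallback if the turn-back formulas~\eqref{E:t_h_k_low}-\eqref{E:t_g_h_k_high} turn out not to apply directly in the present graphical configuration.
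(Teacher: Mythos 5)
The paper offers no argument here at all: its ``proof'' of Lemma~\ref{L:int_h_h_m_k} reads, in full, ``The computation is easy, and left to the reader.'' So there is nothing to compare your proposal against line by line; the question is only whether your sketch is the computation the authors had in mind, and I believe your ``shortcut'' version is. The ingredients you name are exactly the right ones: $h_m$ factors through $f_{r-1}$ by \eqref{E:h_def}, the $k$-fold turn-backs of $h_m$ are governed by \eqref{E:t_h_k_low}--\eqref{E:t_g_h_k_high}, the surviving value $k=m-r+1$ contributes the factor $(-1)^{m+1}[m+1]$ from \eqref{E:t_h_k_middle}, and the $[r]_{a+1}$ is the $K^a$-decorated trace over $f_{r-1}$ from Lemma~\ref{L:trace_f_m} with $m=r-1$ (your sign check $(-1)^{r-1}=1$ for odd $r$ is correct, as is $[r+1]=1$ in the base case). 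This is much closer to what ``easy'' plausibly means than the three-term inductions used for Lemmas~\ref{L:int_g_h_m_k} and~\ref{L:int_g_m}, and note that the authors cannot intend that induction here anyway, since \eqref{E:int_h_h_m_k} is itself an input to those inductions.

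One point in your shortcut paragraph is stated too quickly: for $0\leqs k\leqs m-r$ the turn-back formula \eqref{E:t_h_k_low} does \emph{not} give zero --- it gives the nonzero multiple $(-1)^k\frac{[m+1]}{[m-k+1]}\,h_{m-k}$ --- so the vanishing in that range is not a consequence of the turn-back formulas alone. It requires the additional step that the resulting $h_{m-k}$ (note $m-k\geqs r$, so it is defined) is then annihilated by the second copy of $h_m$ via the $h$--$h$ absorption relation in the block \eqref{E:g_h}, together with $h_m^*=h_m$. You do invoke nilpotency correctly in your opening paragraph, so this is a presentational gap rather than a mathematical one, but in a written-up proof the case split should be: $k\leqs m-r$ dies by $h$--$h$ nilpotency after the reduction to $h_{m-k}$; $k=m-r+1$ survives via \eqref{E:t_h_k_middle} and absorption of $f_{r-1}$; $k\geqs m-r+2$ dies by \eqref{E:t_g_h_k_high}. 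With that made explicit, and the final hedge about ``falling back to induction'' removed, your argument is complete.
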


\begin{proof}
 The computation is easy, and left to the reader.
\end{proof}

\begin{lemma}\label{L:int_g_h_m_k}
 For every integer $r \leqs m \leqs 2r-2$, if $0 \leqs k \leqs m-r$ we have
 \begin{align}
  \pic{p_g_h_m_k} &= \pic{p_h_g_m_k} \nonumber \\*
  & = (-1)^m \frac{[2r-m+k-1]_{a+1}[m+1]}{[m-k+1]} \cdot \pic{a_0_2r-2_r-1} \label{E:int_g_h_m_k_low}
 \end{align}
 and if $m-r+1 \leqs k \leqs m$ we have
 \begin{align}
  \pic{p_g_h_m_k} &= \pic{p_h_g_m_k} = \delta_{k,m-r+1} (-1)^{m+1} [m+1] \cdot \pic{a_0_2r-2_0} \nonumber \\*
  &+ (1-\delta_{k,m-r+1}) (-1)^{m+1} \frac{[m-k+1]_{a+1}[m+1]}{[m-k+1]} \cdot \pic{a_0_2r-2_r-1} \label{E:int_g_h_m_k_high}
 \end{align}
\end{lemma}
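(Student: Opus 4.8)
The plan is to establish \eqref{E:int_g_h_m_k_low} and \eqref{E:int_g_h_m_k_high} by induction on $m$ in the range $r \leqs m \leqs 2r-2$, following the same pattern as the proofs of Lemma~\ref{L:open_h_r} and Lemma~\ref{L:open_g_m}. The equality $\pic{p_g_h_m_k} = \pic{p_h_g_m_k}$ I would treat first and separately: it is a matter of isotopy together with $h_m^{*}=h_m$ and the absorption relations of \eqref{E:g_h}, exactly like the identity $\pic{open_p_g_h_r_k} = \pic{open_p_h_g_r_k}$ in Lemma~\ref{L:open_h_r}, so that it suffices to compute one of the two pictures.

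For the base case $m=r$, where $g_r$ and $h_r$ are given explicitly by \eqref{E:g_r_def} and \eqref{E:h_def}, closing $k$ of the $r$ strands and using \eqref{E:open_g_r}--\eqref{E:open_h_h_r_k} of Lemma~\ref{L:open_h_r}, then composing with the remaining $f_{r-1}$ and collapsing the surviving $K^{a}$-bead with \eqref{E:a_0_2r-2_k}, reproduces the claimed coefficients. For the inductive step ($r+1 \leqs m \leqs 2r-2$) I would peel off the top strand of $g_m$ via its recursion --- \eqref{E:g_r+1_def} for $m=r+1$ and \eqref{E:g_m_def} for $m \geqs r+2$ --- which splits $\pic{p_g_h_m_k}$ into a turnback term, a $\frac{[m-1]}{[m]}$-term, and a $-\frac{2}{[m]^{2}}$-term involving $h_{m-1}$. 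Sliding the $K^{a}$-bead through the turnbacks by \eqref{E:bead_notation_1}--\eqref{E:bead_notation_3}, I can then rewrite the resulting partial traces using the expansions of Lemma~\ref{L:open_g_m} (equations~\eqref{E:open_g_g_m_k_low} and \eqref{E:open_g_g_m_k_high}), dispose of the $h$-$h$ pieces by Lemma~\ref{L:int_h_h_m_k}, and feed in the inductive hypothesis at width $m-1$ for the closure parameters $k-1$, $k$, $k+1$. Collecting terms and simplifying with the three-term identity \eqref{E:three_terms} and, once more, \eqref{E:a_0_2r-2_k} should produce the two stated closed forms; the dichotomy $0 \leqs k \leqs m-r$ versus $m-r+1 \leqs k \leqs m$ records whether the closed strands stay outside or pierce the $f_{r-1}$-core of $h_m$, the threshold $k = m-r+1$ being precisely where the Kronecker delta and the extra term $\pic{a_0_2r-2_0}$ enter \eqref{E:int_g_h_m_k_high}.

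The main obstacle is the sheer bookkeeping. Lemma~\ref{L:open_g_m} already carries nine coefficients $\alpha^{\bullet}_{m,k}, \beta^{\bullet}_{m,k}, \gamma^{\bullet}_{m,k}$, and combining them against three instances of the inductive hypothesis and the various $h$-contributions yields a large linear recursion whose collapse to the compact answers rests delicately on \eqref{E:three_terms}; one must additionally keep straight which of the three regimes ($k$ small, $k = m-r+1$, $k$ large) each summand falls into and check the degenerate boundary values $k \in \{0, m-r, m-r+1, m\}$, where many coefficients vanish. Since this algebra is routine once properly organized, I would present the reduction only schematically and, in the style of the neighbouring lemmas, leave the explicit coefficient matching to the reader.
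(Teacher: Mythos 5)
Your proposal follows essentially the same route as the paper: induction on $m$ with the ansatz expressing the partially closed diagram as $\alpha_{m,k}$ times one basis diagram plus $\beta_{m,k}$ times the other, base case $m=r$ read off from Lemma~\ref{L:open_h_r} together with equation~\eqref{E:a_0_2r-2_k}, and inductive step feeding Lemma~\ref{L:open_g_m} and Lemma~\ref{L:int_h_h_m_k} into a three-term recursion for $\alpha_{m,k}$ and $\beta_{m,k}$ that is solved using equation~\eqref{E:three_terms}. The only cosmetic difference is that the paper applies Lemma~\ref{L:open_g_m} directly (its expansion already encodes the peeling-off of a strand of $g_m$, so doing both would be redundant) and simply asserts the equality of the two pictures in the ansatz rather than arguing it separately.
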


\begin{proof}
 Equations~\eqref{E:int_g_h_m_k_low} and \eqref{E:int_g_h_m_k_high} are proved by induction on $r \leqs m \leqs 2r-2$. In order to do this, let us set
 \begin{align*}
  \pic{p_g_h_m_k} &= \pic{p_h_g_m_k} = \alpha_{m,k} \cdot \pic{a_0_2r-2_0} + \beta_{m,k} \cdot \pic{a_0_2r-2_r-1}
 \end{align*}
 It is also convenient to set 
 \begin{align*}
  \alpha_{m,-1} = \beta_{m,-1} = \alpha_{m,m+1} = \beta_{m,m+1} = \alpha_{m,m+2} = \beta_{m,m+2} &= 0.
 \end{align*}
 If $m = r$ then equations~\eqref{E:open_h_r}, \eqref{E:open_g_h_r_k}, and \eqref{E:a_0_2r-2_k} give the condition
 \begin{align*}
  \alpha_{r,0} &= 0, & \beta_{r,0} &= - [r-1]_{a+1}, & \alpha_{r,k} &= \delta_{k,1}, & \beta_{r,k} &= - (1-\delta_{k,1}) \frac{[r-k+1]_{a+1}}{[k-1]}.
 \end{align*}
 If $r+1 \leqs m \leqs 2r-2$ and $0 \leqs k \leqs m-r$, then equations~\eqref{E:open_g_g_m_k_low}, \eqref{E:a_0_2r-2_k}, and \eqref{E:int_h_h_m_k} give the condition
 \begin{align*}
  \alpha_{m,k} &= \left( - \frac{[m+1]}{[m]} + \frac{[m-k+1][m-k]}{[m]^2} \right) \alpha_{m-1,k-1} \\*
  &\hspace*{\parindent} - \frac{[2]_{a+1}[m-k]^2}{[m]^2} \alpha_{m-1,k} \\*
  &\hspace*{\parindent} + \frac{[m-k][m-k-1]}{[m]^2} \alpha_{m-1,k+1}, \\  
  \beta_{m,k} &= \left( - \frac{[m+1]}{[m]} + \frac{[m-k+1][m-k]}{[m]^2} \right) \beta_{m-1,k-1} \\*
  &\hspace*{\parindent} - \frac{[2]_{a+1}[m-k]^2}{[m]^2} \beta_{m-1,k} \\*
  &\hspace*{\parindent} + \frac{[m-k][m-k-1]}{[m]^2} \beta_{m-1,k+1} \\*
  &\hspace*{\parindent} - \delta_{k,m-r-1} (-1)^m  \frac{[r]_{a+1}}{[m]}.
 \end{align*}
 Thanks to equation~\eqref{E:three_terms}, the solution is
 \begin{align*}
  \alpha_{m,k} &= 0, \\*
  \beta_{m,k} &= (-1)^m \frac{[2r-m+k-1]_{a+1}[m+1]}{[m-k+1]}.
 \end{align*}
 If $r+1 \leqs m \leqs 2r-2$ and $m-r+1 \leqs k \leqs m$, then equations~\eqref{E:open_g_g_m_k_high}, \eqref{E:a_0_2r-2_k}, and \eqref{E:int_h_h_m_k} give the condition
 \begin{align*}
  \alpha_{m,k} &= \left( - \frac{[m+1]}{[m]} + \frac{[m-k+1][m-k]}{[m]^2} \right) \alpha_{m-1,k-1} \\*
  &\hspace*{\parindent} - \frac{[2]_{a+1}[m-k]^2}{[m]^2} \alpha_{m-1,k} \\*
  &\hspace*{\parindent} + \frac{[m-k][m-k-1]}{[m]^2} \alpha_{m-1,k+1}, \\  
  \beta_{m,k} &= \left( - \frac{[m+1]}{[m]} + \frac{[m-k+1][m-k]}{[m]^2} \right) \beta_{m-1,k-1} \\*
  &\hspace*{\parindent} - \frac{[2]_{a+1}[m-k]^2}{[m]^2} \beta_{m-1,k} \\*
  &\hspace*{\parindent} + \frac{[m-k][m-k-1]}{[m]^2} \beta_{m-1,k+1} \\*
  &\hspace*{\parindent} - \delta_{k,m-r+1} (-1)^m  \frac{[r]_{a+1}}{[m]}.
 \end{align*}
 Thanks to equation~\eqref{E:three_terms}, the solution is
 \begin{align*}
  \alpha_{m,k} &= \delta_{k,m-r+1} (-1)^{m+1} [m+1], \\*
  \beta_{m,k} &= (1-\delta_{k,m-r+1}) (-1)^{m+1} \frac{[m-k+1]_{a+1}[m+1]}{[m-k+1]}. \qedhere
 \end{align*}
\end{proof}

\begin{lemma}
 For every integer $0 \leqs h \leqs r$ we have
 \begin{align}\label{E:aux_1}
  2[r-h]_{a+1} = 2[r]_{a+1} - 2[h]_{a+1}.
 \end{align}
\end{lemma}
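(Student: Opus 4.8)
The plan is to reduce the identity to a statement about ordinary quantum integers and then exploit the fact that $q^r = 1$. First I would cancel the common factor of $2$, so that it is enough to prove $[r-h]_{a+1} = [r]_{a+1} - [h]_{a+1}$. The definition of $[k]_{a+1}$ has two branches, so I would split into cases. If $a+1 \equiv 0 \mod r$, then $[k]_{a+1} = k$ for every integer $k$, and the claimed identity collapses to $r - h = r - h$, so nothing is needed.

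Now suppose $a+1 \not\equiv 0 \mod r$. Since $r$ is odd, the denominator $[a+1] = (q^{a+1} - q^{-a-1})/(q-q^{-1})$ vanishes exactly when $r \mid 2(a+1)$, hence exactly when $r \mid a+1$; so $[a+1] \neq 0$ in this case. Using $[k]_{a+1} = [(a+1)k]/[a+1]$ and clearing this nonzero denominator, the claim becomes
\[
 [(a+1)(r-h)] = [(a+1)r] - [(a+1)h],
\]
where $[\,\cdot\,]$ is understood as the odd function $[k] := (q^k - q^{-k})/(q-q^{-1})$ of $k \in \Z$. Since $q = e^{2\pi i/r}$ we have $q^r = 1$, hence $q^{\pm(a+1)r} = 1$; this immediately gives $[(a+1)r] = 0$, and, writing $(a+1)(r-h) = (a+1)r - (a+1)h$,
\[
 [(a+1)(r-h)] = \frac{q^{-(a+1)h} - q^{(a+1)h}}{q - q^{-1}} = -[(a+1)h],
\]
which is precisely the identity to be proven.

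I do not anticipate any real obstacle; this is a short computation. The only points that deserve a moment of care are checking that $[a+1]$ is invertible precisely when $a+1 \not\equiv 0 \mod r$ (which is where the oddness of $r$ enters) and keeping consistent track of the extension of $[\,\cdot\,]$ to negative arguments, since $a \in \Z$ need not be nonnegative.
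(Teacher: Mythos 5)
Your proof is correct and complete. The paper itself only says the computation is ``easy, and left to the reader,'' and your case split on whether $a+1\equiv 0 \bmod r$, together with the observations that $[(a+1)r]=0$ and $[(a+1)(r-h)]=-[(a+1)h]$ since $q^r=1$ (and that $[a+1]\neq 0$ precisely because $r$ is odd), is exactly the intended verification.
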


\begin{proof}
 The computation is easy, and left to the reader.
\end{proof}

\begin{lemma}\label{L:int_g_m}
 For all integers $r \leqs m \leqs 2r-2$ and $0 \leqs k \leqs m-r$ we have
 \begin{align}
  &\pic{p_g_g_m_k} = (-1)^m \frac{\{ m-k-r+1 \}'_{a+1}[m+1]}{[m-k+1]} \cdot \pic{a_0_2r-2_0} \nonumber \\*
  &- (-1)^m \left( \frac{2[m-k-r+1]_{a+1}[k]}{[m-k+1]^2} + \frac{[r]_{a+1}[m+1]}{[m-k+1]} \sum_{h=1}^{m-k-r} \frac{\{ h \}'}{[h]} \right) \cdot \pic{a_0_2r-2_r-1} \label{E:int_g_g_m_k_low}
 \end{align}
\end{lemma}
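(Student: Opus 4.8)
The plan is to prove \eqref{E:int_g_g_m_k_low} (together with its high-range counterpart, valid for $m-r+1\leqs k\leqs m$, which the lemma also records) by induction on $m$ in the range $r\leqs m\leqs 2r-2$, in the same style as Lemmas~\ref{L:open_g_m}, \ref{L:int_h_h_m_k} and \ref{L:int_g_h_m_k}. The first step is to write the left-hand side of \eqref{E:int_g_g_m_k_low} as $\alpha_{m,k}$ times the basis morphism of internal degree $0$ plus $\beta_{m,k}$ times the basis morphism of internal degree $r-1$, so that the claim becomes the closed formulas $\alpha_{m,k}=(-1)^m\{m-k-r+1\}'_{a+1}[m+1]/[m-k+1]$ and $\beta_{m,k}=-(-1)^m(2[m-k-r+1]_{a+1}[k]/[m-k+1]^2 + [r]_{a+1}[m+1]/[m-k+1]\sum_{h=1}^{m-k-r}\{h\}'/[h])$. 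For the base case $m=r$ one combines \eqref{E:open_g_r}, which turns the relevant partial trace of $g_r$ into a multiple of the $f_{r-1}$-labelled configuration, with \eqref{E:a_0_2r-2_k} and a short direct computation in $\beadC$ using \eqref{E:bead_notation_1}--\eqref{E:bead_notation_3}; this yields $\alpha_{r,k}$ and $\beta_{r,k}$ and one checks they agree with the formulas.

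For the inductive step from $m-1$ to $m$ I would apply Lemma~\ref{L:open_g_m} --- equation~\eqref{E:open_g_g_m_k_low} when $0\leqs k\leqs m-r$ and \eqref{E:open_g_g_m_k_high} when $m-r+1\leqs k\leqs m$ --- to expand the unclosed $g_m$-$g_m$ configuration at level $m$ into level-$(m-1)$ configurations built from the pairs $g_{m-1}$-$g_{m-1}$, $g_{m-1}$-$h_{m-1}$ (and its mirror), and $h_{m-1}$-$h_{m-1}$, with internal index $k-1$, $k$, or $k+1$. Closing each term reduces it to known data: the $g_{m-1}$-$g_{m-1}$ terms to the inductive hypothesis, the $g_{m-1}$-$h_{m-1}$ terms to Lemma~\ref{L:int_g_h_m_k} (both \eqref{E:int_g_h_m_k_low} and, at the range boundary $k=m-r$, \eqref{E:int_g_h_m_k_high}), and the $h_{m-1}$-$h_{m-1}$ terms to Lemma~\ref{L:int_h_h_m_k}. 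This produces a homogeneous three-term recursion for $\alpha_{m,k}$ with the same multipliers that appear in \eqref{E:open_f_m_k}, and the identical recursion for $\beta_{m,k}$ carrying one inhomogeneous term of the shape $-\delta_{k,m-r-1}(-1)^m[r]_{a+1}/[m]$ (respectively $-\delta_{k,m-r+1}(-1)^m[r]_{a+1}/[m]$ in the high range) coming from the $\delta$-contributions in the $\gamma$-coefficients of \eqref{E:open_g_g_m_k_low}--\eqref{E:open_g_g_m_k_high} after closing via \eqref{E:int_h_h_m_k}. It then remains to verify that the claimed closed forms solve these recursions and the base case; the $\alpha$-part is a short computation with the three-term identity \eqref{E:three_terms}, and the $\beta$-part is a longer but routine $q$-integer manipulation using \eqref{E:three_terms} repeatedly together with the auxiliary identity \eqref{E:aux_1}.

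The main obstacle I anticipate is tracking the finite sum $\sum_{h=1}^{m-k-r}\{h\}'/[h]$ through the recursion: one has to check that the single inhomogeneous term $-\delta_{k,m-r-1}(-1)^m[r]_{a+1}/[m]$ is exactly what is needed to extend this sum by one step when $m$ increases, and that when $k=m-r$ the sum is empty, so that the $\{m-k-r+1\}'_{a+1}$ occurring in $\alpha_{m,k}$ and the $[m-k-r+1]_{a+1}$ occurring in $\beta_{m,k}$ are reproduced by the high-range formula \eqref{E:int_g_h_m_k_high} via \eqref{E:aux_1}. Everything else is bookkeeping: \eqref{E:open_g_g_m_k_low} has nine terms, so assembling the $\beta$-recursion is lengthy, but each individual cancellation is an instance of \eqref{E:three_terms}, exactly as in the proof of Lemma~\ref{L:open_g_m}.
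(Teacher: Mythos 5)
Your overall architecture is the paper's: induction on $m$, expansion via Lemma~\ref{L:open_g_m}, closure of the resulting level-$(m-1)$ configurations using the inductive hypothesis together with Lemmas~\ref{L:int_h_h_m_k} and~\ref{L:int_g_h_m_k}, and resolution of the resulting three-term recursion with \eqref{E:three_terms} and \eqref{E:aux_1}. However, your description of that recursion is wrong in two ways that would derail the verification you yourself flag as the main obstacle. First, the recursion for the degree-$0$ coefficient $\alpha_{m,k}$ is \emph{not} homogeneous: the $g$--$h$ cross terms of \eqref{E:open_g_g_m_k_low} with internal index $k+1=m-r$ land in the high range of Lemma~\ref{L:int_g_h_m_k} at level $m-1$, and \eqref{E:int_g_h_m_k_high} then produces a degree-$0$ contribution, giving the inhomogeneous term $-\delta_{k,m-r-1}(-1)^m\tfrac{2}{[m]}$. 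Second, and more seriously, the inhomogeneous part of the $\beta$-recursion is not the single delta term you write down: the $g$--$h$ cross terms closed via the low-range formula \eqref{E:int_g_h_m_k_low} contribute a nonzero known quantity at \emph{every} $k$, and after simplification the recursion carries the term $(-1)^m\,2[2r-m+k+1]_{a+1}/([m][m-k-1])$ for all $0\leqs k\leqs m-r$. It is this ever-present term --- not a delta supported at a single $k$, and not the $h$--$h$ closures --- that appends one summand to $\sum_{h=1}^{m-k-r}\{h\}'/[h]$ each time $m-k$ grows; a single delta kick per level cannot reproduce the stated closed form, so the consistency check you propose would fail as set up. (The term you quote is in fact the inhomogeneous term of the $\beta$-recursion in the proof of Lemma~\ref{L:int_g_h_m_k}, not of this lemma.)

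Two smaller points. Lemma~\ref{L:int_g_m} has no high-range counterpart, and none is needed: in the recursion the coefficients multiplying the level-$(m-1)$ data at indices $k'=m-r$ and $k'=m-r+1$ all contain a factor $[m-k']$ or $[m-k'-1]$ equal to $[r]=0$, so the induction never leaves the low range and only \eqref{E:open_g_g_m_k_low} is ever used; proving a separate high-range formula, as you propose, is both unnecessary and unsupported, since you never state what that formula would be. Finally, for the base case $m=r$ the constraint $0\leqs k\leqs m-r$ forces $k=0$, so the only data to compute are $\alpha_{r,0}=-[2]_{a+1}$ and $\beta_{r,0}=0$.
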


\begin{proof}
 Equation~\eqref{E:int_g_g_m_k_low} is proved by induction on $r \leqs m-k \leqs 2r-2$. In order to do this, let us set
 \begin{align*}
  \pic{p_g_g_m_k} &= \alpha_{m,k} \cdot \pic{a_0_2r-2_0} + \beta_{m,k} \cdot \pic{a_0_2r-2_r-1}
 \end{align*}
 It is also convenient to set 
 \begin{align*}
  \alpha_{m,-1} = \beta_{m,-1} = \alpha_{m,m+1} = \beta_{m,m+1} = \alpha_{m,m+2} = \beta_{m,m+2} &= 0.
 \end{align*}
 If $m-k=r$ and $m=r$, then equations~\eqref{E:open_g_r} and \eqref{E:a_0_2r-2_k} give the condition
 \begin{align*}
  \alpha_{r,0} &= -[2]_{a+1}, & \beta_{r,0} &= 0.
 \end{align*}
 If $r \leqs m-k \leqs 2r-2$ and $r+1 \leqs m \leqs 2r-2$, then equations~\eqref{E:open_g_g_m_k_low} and \eqref{E:a_0_2r-2_k} give the condition
 \begin{align*}
  \alpha_{m,k} &= \left( - \frac{[m+1]}{[m]} + \frac{[m-k+1][m-k]}{[m]^2} \right) \alpha_{m-1,k-1} \\*
  &\hspace*{\parindent} - \frac{[2]_{a+1}[m-k]^2}{[m]^2} \alpha_{m-1,k} \\*
  &\hspace*{\parindent} + \frac{[m-k][m-k-1]}{[m]^2} \alpha_{m-1,k+1} \\*
  &\hspace*{\parindent} - \delta_{k,m-r-1} (-1)^m \frac{2}{[m]}, \\
  \beta_{m,k} &= \left( - \frac{[m+1]}{[m]} + \frac{[m-k+1][m-k]}{[m]^2} \right) \beta_{m-1,k-1} \\*
  &\hspace*{\parindent} - \frac{[2]_{a+1}[m-k]^2}{[m]^2} \beta_{m-1,k} \\*
  &\hspace*{\parindent} + \frac{[m-k][m-k-1]}{[m]^2} \beta_{m-1,k+1} \\*
  &\hspace*{\parindent} + (-1)^m \frac{2[2r-m+k+1]_{a+1}}{[m][m-k-1]}.
 \end{align*}  
 Thanks to equations~\eqref{E:three_terms} and \eqref{E:aux_1} the solution is
 \begin{align*}
  \alpha_{m,k} &= (-1)^m \frac{\{ m-k-r+1 \}'_{a+1}[m+1]}{[m-k+1]}, \\*
  \beta_{m,k} &= (-1)^{m+1} \left( \frac{2[m-k-r+1]_{a+1}[k]}{[m-k+1]^2} + \frac{[r]_{a+1}[m+1]}{[m-k+1]} \sum_{h=1}^{m-k-r} \frac{\{ h \}'}{[h]} \right).
 \end{align*}
\end{proof}

\end{document}